\newcounter{item}[section]
\newcounter{kirshr}
\newcounter{kirsha}
\newcounter{kirshb}
\newenvironment{enumroman}{\setcounter{kirshr}{1}
\begin{list}{(\roman{kirshr})}{\usecounter{kirshr}} }{\end{list}}
\newenvironment{enumarab}{\setcounter{kirshb}{1}
\begin{list}{(\arabic{kirshb})}{\usecounter{kirshb}} }{\end{list}}
\newtheorem{theorem}{Theorem}[section]
\newtheorem{lemma}[theorem]{Lemma}
\newtheorem{corollary}[theorem]{Corollary}
\newenvironment{demo}[1]{\noindent{\bf #1.}\upshape\mdseries}
{\nopagebreak{\hfill\rule{2mm}{2mm}\nopagebreak}\par\normalfont}
\theoremstyle{definition}
\newtheorem{example}[theorem]{Example}
\newtheorem{definition}[theorem]{Definition}
\def\C{{\mathfrak{C}}}
\def\At{{\sf At}}
\def\Nr{{\mathfrak{Nr}}}
\def\Fr{{\mathfrak{Fr}}}
\def\Sg{{\mathfrak{Sg}}}
\def\A{{\mathfrak{A}}}
\def\B{{\mathfrak{B}}}
\def\C{{\mathfrak{C}}}
\def\D{{\mathfrak{D}}}
\def\M{{\mathfrak{M}}}
\def\CA{{\bf CA}}
\def\Dc{{\bf Dc}}
\def\K{{\bf K}}
\def\K{{\bf K}}
\def\Rd{{\mathfrak{Rd}}}
\def\(R)RA{{\bf (R)RA}}
\def\Dc{{\bf Dc}}
\def\Dc{{\bf Dc}}
\def\Sc{{\bf Sc}}
 \def\CA{{\sf CA}}
\def\B{{\sf B}}
\def\K{{\sf K}}
\def\Nr{{\mathfrak{Nr}}}
\def\Nr{{\mathfrak{Nr}}}
\def\Tm{{\mathfrak{Tm}}}
\def\A{{\mathfrak{A}}}
\def\B{{\mathfrak{B}}}
\def\C{{\mathfrak{C}}}
\def\D{{\mathfrak{D}}}
\def\CA{{\bf CA}}
\def\L{{\mathfrak{L}}}
\def\Nr{{\mathfrak{Nr}}}
\def\Fr{{\mathfrak{Fr}}}
\def\Sg{{\mathfrak{Sg}}}
\def\Rd{{\mathfrak{Rd}}}
\def\Ig{{\mathfrak{Ig}}}
\def\CA{{\bf CA}}
\def\K{{\bf K}}
\def\L{{\bf L}}
\def\(R)RA{{\bf (R)RA}}
\def\Dc{{\bf Dc}}
\def\Dc{{\bf Dc}}
\def\Nr{{\mathfrak{Nr}}}
\def\Fr{{\mathfrak{Fr}}}
\def\Sg{{\mathfrak{Sg}}}
\def\Rd{{\mathfrak{Rd}}}
\def\Ig{{\mathfrak{Ig}}}
\def\CA{{\bf CA}}
\def\K{{\bf K}}
\def\L{{\bf L}}
\def\(R)RA{{\bf (R)RA}}
\def\Dc{{\bf Dc}}
\def\Dc{{\bf Dc}}
\def\Dc{{\bf Dc}}
\def\Sc{{\bf Sc}}
 \def\CA{{\sf CA}}
\def\M{{\mathfrak{M}}}
\def\At{{\mathfrak{At}}}
\def\K{{\bf K}}
\def\A{{\mathfrak{A}}}
\def\B{{\mathfrak{B}}}
\def\C{{\mathfrak{C}}}
\def\D{{\mathfrak{D}}}
\def\Nr{{\mathfrak{Nr}}}
\def\CA{{\bf CA}}
\def\Sg{{\mathfrak Sg}}
\def\At{{\sf At}}
\def\rng{{\sf rng}}
\def\dom{{\sf dom}}
\def\Fl{{\mathfrak{Fl}}}
\def\Co{{\sf Co}}
\def\Dc{{\sf Dc}}
\def\Sc{{\sf Sc}}
\def\Ss{{\sf Ss}}
\def\TCA{{\sf TCA}}
\def\CA{{\sf CA}}
\def\TDc{{\sf TDc}}
\def\TPCA{{\sf TPCA}}
\def\L{{\mathfrak{L}}}
\title {Amalgamation, interpolation and congruence extension properties in topological cylindric algebras}
\author{Tarek Sayed Ahmed}
\begin{document}
\maketitle
\begin{abstract}
\noindent  
We study  various amalgamation properties in topological cylindric algebras of all dimensions.
\footnote{Topological logic, Chang modal logic, cylindric algebras, representation theory, amalgamation, congruence extension, interpolation
Mathematics subject classification: 03B50, 03B52, 03G15.}
\end{abstract}

\section{Amalgamation; positive results}

We turn to investigating amalgamation properties for various subclasses of $\sf TCA_{\alpha}$, most, but not all,
consisting solely of representable algebras. We shall adress all dimensions.
All our positive results in the infinite dimensional case 
will follow from the interpolation result proved in part 1, which we recall: 

\begin{theorem}\label{in} Let $\alpha$ be an infinite ordinal. let $\beta$ be a cardinal.  Let $\rho:\beta\to \wp(\alpha)$ such that
$\alpha\sim \rho(i)$ is infinite for all $i\in \beta$. Then $\Fr_{\beta}^{\rho}\TCA_{\alpha}$ has the interpolation property.
\end{theorem}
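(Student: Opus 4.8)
The plan is to reduce the interpolation property of $\Fr:=\Fr_\beta^{\rho}\TCA_\alpha$ to the superamalgamation property of a suitable variety in which $\Fr$ is the corresponding dimension-restricted free algebra. The hypothesis that $\alpha\sim\rho(i)$ is infinite for every $i\in\beta$ is exactly what lets one identify that variety as $\mathsf{RTCA}_\alpha$, the class of representable topological cylindric algebras of dimension $\alpha$. Concretely, following the classical description of dimension-restricted free cylindric algebras, I would first show that $\Fr$ is isomorphic to the neat $\alpha$-reduct $\Nr_\alpha\Fr_\beta^{\rho}\TCA_{\alpha+\omega}$: the generator-to-generator map extends by freeness to a $\TCA_\alpha$-homomorphism into that neat reduct, it is surjective by a generation argument, and it is injective precisely because each $\alpha\sim\rho(i)$ is infinite --- the infinitely many dimensions of $\alpha$ lying outside $\rho(i)$ provide, inside the $\alpha$-dimensional algebra itself, the room to witness every non-identity that survives dilation. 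Since a neat $\alpha$-reduct of a $\TCA_{\alpha+\omega}$ lies in $\SNr_\alpha\TCA_{\alpha+\omega}=\mathsf{RTCA}_\alpha$ (the topological form of the Neat Embedding Theorem --- the interior operator transports through any representation built for the cylindric reduct), $\Fr$ is representable, and being at the same time free in $\TCA_\alpha$ it coincides with the dimension-restricted free algebra $\Fr_\beta^{\rho}\mathsf{RTCA}_\alpha$.

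\textbf{Step 2.} Once $\Fr=\Fr_\beta^{\rho}\mathsf{RTCA}_\alpha$, its interpolation property follows from the superamalgamation property of the variety $\mathsf{RTCA}_\alpha$ (together with the congruence extension property, to pass from free algebras to the relatively presented $\Fr$) by the standard correspondence between superamalgamation in a variety and Craig-style interpolation in its relatively free algebras --- the Maksimova--Pigozzi--Czelakowski dictionary, in the form used for $\RCA_\alpha$. Unpacked: from a failed interpolation $x\le y$ with $x$ over the generators indexed by $P$, $y$ over those indexed by $Q$, and no intermediate $z$ over $P\cap Q$, one extracts an amalgamation base together with two one-step extensions and applies superamalgamation to obtain $\D\in\mathsf{RTCA}_\alpha$ receiving both compatibly, hence a homomorphism $\Fr\to\D$ contradicting $x\le y$. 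Equivalently, and sharing the same technical core, one proves Craig interpolation directly for the underlying topological (Chang-modal) first-order logic by a Henkin-style construction and transfers it to $\Fr$ through the Lindenbaum--Tarski correspondence.

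\textbf{Where the difficulty lies.} Everything above is bookkeeping with neat reducts and the algebra/logic dictionary except for one point: superamalgamation of \emph{topological} cylindric set algebras over a common subalgebra. Merging two set representations that agree on a common subalgebra as cylindric structures is itself delicate --- in essence it is Craig interpolation for the $\alpha$-variable fragment of first-order logic, already nontrivial for $\RCA_\alpha$ --- but the topological layer adds a genuine new demand: the base of the amalgamated representation must carry a topology whose interior operator simultaneously restricts to the interior operators of the two given representations and still validates the $\TCA_\alpha$ axioms linking $\mathsf{K}$ to the cylindrifications $\cyl{i}$ and the diagonals $\diag{i}{j}$. Producing such a coherent topology --- equivalently, topologizing the Henkin model for the topological logic so that the modal axioms hold on the nose --- is the real obstacle and the crux of the theorem; the co-infiniteness hypothesis on $\rho$ is precisely what keeps enough "spare" dimensions free for this construction to go through.
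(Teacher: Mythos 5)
Your Step~2 rests on a premise that this very paper disproves, and that is a fatal gap rather than a presentational one. The variety ${\sf RTCA}_{\alpha}$ does \emph{not} have the superamalgamation property for infinite $\alpha$: it does not even have the ordinary amalgamation property (Theorem~\ref{am}), and correspondingly $\Fr_{k}{\sf RTCA}_{\alpha}$ for $k\geq 4$ fails the interpolation property, indeed even the weaker $AIP$ (Theorem~\ref{apt}(3) and Corollary~\ref{infinitevarieties}(1)). So the reduction ``interpolation for $\Fr_{\beta}^{\rho}\TCA_{\alpha}$ follows from SUPAP of ${\sf RTCA}_{\alpha}$'' cannot work: once you identify $\Fr_{\beta}^{\rho}\TCA_{\alpha}$ with a relatively free algebra of the representable variety and then appeal only to properties of that variety, you have discarded exactly the information that makes the theorem true. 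The hypothesis that $\alpha\sim\rho(i)$ is infinite is not merely a device for proving representability via a neat-reduct identification (your Step~1, which is fine in spirit); it is what places the algebra in a dimension-complemented-like situation, and it is only for such classes (${\sf TDc}_{\alpha}$, ${\sf TLf}_{\alpha}$) that superamalgamation-type statements hold --- the unrestricted free representable algebra genuinely lacks an interpolant for the inequality $r\leq s\cdot t$ exhibited in the proof of Theorem~\ref{am}. Moreover, the logical order in this development is the reverse of yours: the superamalgamation results (Theorem~\ref{SUPAP} for ${\sf TDc}_{\alpha}$ relative to ${\sf RTCA}_{\alpha}$ and for ${\sf TLf}_{\alpha}$) are themselves \emph{deduced from} Theorem~\ref{in}, so even a corrected version of your reduction aimed at the right base class would be circular here.

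The proof the paper relies on (given in Part~1 of the series and visible in outline in Theorems~\ref{dilation} and~\ref{Ferenczi}) is the direct Henkin-style argument that you mention only in passing as an ``equivalent'' alternative: assuming $a\leq b$ with $a\in\Sg^{\A}X_1$, $b\in\Sg^{\A}X_2$ and no interpolant in $\Sg^{\A}(X_1\cap X_2)$, one builds a suitable pair of filters, dilates to $\alpha+\omega$ dimensions, extends to a Henkin ultrafilter $F$ eliminating cylindrifiers (the spare indices supplied by the co-infiniteness of $\alpha\sim\rho(i)$ are what make the witnessing possible), and constructs a set representation whose base carries the topology generated by the sets $O_{p,i}=\{k: \sub{i}{k}I_ip\in F\}$, with interior operators $J_i$ read off from it; this yields a homomorphism refuting $a\leq b$. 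Your closing paragraph correctly locates the crux in topologizing the Henkin model so that the interior operators are respected, but the proposal as written does not carry out that construction, and its main reduction is to a statement the paper shows to be false.
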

This theorem  will lead to theorem \ref{s} showing that a certain class of algebras
has the super amalgamation property, which is the main source for all 
positive results obtained.  

All negative results (for both finite and infinite dimension) are obtained by bouncing them to the $\CA$ case, using the earlier 
observation made, namely, that {\it any $\sf CA_{\alpha}$ can be expanded to a $\sf TCA_{\alpha}$ such that
the latter is representable if and only if the former is.} We start with the relevant definitions:

\begin{definition} Let $\sf L$ be a class of algebras.
\begin{enumroman}
\item  $\A_0\in \sf L$ is in the {\it amalgamation base} of $\sf L$ if for all $\A_1, \A_2\in \sf L$ and injective homomorphisms
$i_1:\A_0\to \A_1$ $i_2:\A_0\to \A_2$
there exist $\D\in \sf L$
and injective homomorphisms $m_1:\A_1\to \D$ and $m_2:\A_2\to \D$ such that $m_1\circ i_1=m_2\circ i_2$.
In this case, we say that $\D$ is {\it an amalgam} of $\A_1$ and $\A_2$ over $\A_0$, via $m_1$ and $m_2$, or simply an amalgam.

\item Let everything be as in (i). If, in addition, we have  $m_1\circ i_1(\A_0)=m_1(\A_1)\cap m_2(\A_2)$ then
$\A_0$ is said to be in the {\it strong amalgamation base} of $\sf L$, and like in the previous item
$\D$ is called a {\it strong amalgam}.

\item Let everything be as in (i) and assume that the algebras considered are endowed with a partial order.
If in addition, $(\forall x\in A_j)(\forall y\in A_k)
(m_j(x)\leq m_k(y)\implies (\exists z\in A_0))(x\leq i_j(z)\land i_k(z) \leq y))$
where $\{j,k\}=\{1,2\}$, then we say that $\A_0$ lies in the {\it super amalgamation base} of $\sf L$, and
$\D$ is called {\it a super amalgam}.

\item $\sf L$ has the {\it amalgamation property}, if the amalgam base of $\sf L$ coincides with $\sf L$.
Same for strong amalgamation and super amalgamation.
\end{enumroman}
\end{definition}
We write $AP$, $SAP$ and $SUPAP$ for the amalgamation, strong amalgamation and super amalgamation properties, respectively.
We write ${\sf APbase(K)}$, ${\sf SAPbase(K)}$, and ${\sf SUPAPbase(K)}$, for the amalgamation, 
strong amalgamation, and super amalgamation base
of the class $\sf K$, respectively.
Notice that $SUPAP$ also implies $SAP$ by writing 
the extra condition for $SAP$
as follows: 
$$(\forall x\in A_1)(\forall y\in A_2)
[m(x)=n(y)\implies (\exists z\in A_0)(x=f(z) \land y=h(z))].$$

We will sometimes seek amalgams, and for that matter strong or super amalgams, 
for algebras in a certain class in a possibly bigger one.

\begin{definition} Let $\sf K\subseteq \sf L$.
We say that $\sf K$ has {\it $AP$ with respect to ${\sf L}$} if amalgams can always be found
in $\sf L$. More precisely, for any $\A,\B,\C\in \sf K$ and any injective homomorphisms
$f:\A\to \B$ and $g:\A\to \C$ then there exist $\D\in \sf L$ and injective homomorphisms
$m:\B\to \D$ and $n:\C\to \D$ such that $m\circ f=n\circ g$. 
The analogous definition applies equally well when we replace
$AP$ by $SAP$ or $SUPAP$.
\end{definition}
The next property is different than the amalgamation property; we do not require that both homomorphisms from the base
algebra are injective; 
only one of them is. The definition is taken from \cite{george}.

More precisely:
\begin{definition} A class  $\sf L$ has the {\it transferable injections property}, or $TIP$ for short, 
if  for all $\A_0, \A_1, \A_2\in \sf L$ and injective homomorphism
$i_1:\A_0\to \A_1$, and homomorphism  $i_2:\A_0\to \A_2$
there exist $\D\in \sf L$
and an injective homomorphism $m_1:\A_1\to \D$ 
and a homomorphism $m_2:\A_2\to \D$ such that $m_1\circ i_1=m_2\circ i_2$. We say that $\D$ is a $TI$ amalgam.
\end{definition}

From now on $\alpha$ is an arbitrary ordinal $>0$.
\begin{definition}
Let $\A\in \sf TCA_{\alpha}$. Then a filter $F$ of $\A$ is a Boolean filter, that satisfies in addition that ${\sf q}_ix=x$ for every $i<\alpha$
and every $x\in A$.
\end{definition}
The following lemma is crucial for our later algebraic manipulations.
We show that filters so defined correspond to congruences, thus filters and congruences can be treated equally giving 
quotient algebras.

\begin{theorem}\label{lattice} Let $\A\in \sf TCA_{\alpha}$. Let ${\sf Filt}(\A)$ be
the lattice of filters (with inclusion) on $\A$, and $\Co(\A)$be the lattice of congruences on $\A$.
Then ${\sf Filt}(\A)\cong \Co(\A)$. Furthermore, $\Theta$ restricted to maximal filters is an isomorphism into the set of maximal congruences.
\end{theorem}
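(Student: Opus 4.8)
The plan is to produce two mutually inverse, inclusion-preserving maps between ${\sf Filt}(\A)$ and $\Co(\A)$ and to invoke the elementary fact that an order-preserving bijection whose inverse is also order-preserving is a lattice isomorphism; the assertion about maximal filters will then come out purely formally. For $F\in{\sf Filt}(\A)$ I would put $\Theta(F)=\{(a,b)\in A\times A:(a\to b)\cdot(b\to a)\in F\}$, where $x\to y$ abbreviates $-x+y$, and for $\theta\in\Co(\A)$ I would put $\Psi(\theta)=1/\theta=\{a\in A:(a,1)\in\theta\}$. Each map is visibly monotone, so everything reduces to: (i) $\Theta(F)\in\Co(\A)$; (ii) $\Psi(\theta)\in{\sf Filt}(\A)$; (iii) $\Psi\Theta(F)=F$; (iv) $\Theta\Psi(\theta)=\theta$.

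For (i), that $\Theta(F)$ is an equivalence relation compatible with $+,\cdot,-$ is just the classical correspondence between Boolean filters and Boolean congruences, and it transfers unchanged since $F$ is in particular a Boolean filter. The real content is compatibility with the remaining operations of $\sf TCA_{\alpha}$: the diagonal constants $\diag ij$ are nullary, so nothing is needed there, while for each distinguished unary operation $\mathsf f$ (a cylindrification $\cyl i$ or a topological operator ${\sf q}_i$) I must show that $(a\to b)\cdot(b\to a)\in F$ implies $(\mathsf f a\to\mathsf f b)\cdot(\mathsf f b\to\mathsf f a)\in F$. I would obtain this from a term inequality, valid in every $\sf TCA_{\alpha}$, of the form $\mathsf t_{\mathsf f}\big((a\to b)\cdot(b\to a)\big)\le(\mathsf f a\to\mathsf f b)\cdot(\mathsf f b\to\mathsf f a)$ for a suitable unary term $\mathsf t_{\mathsf f}$ associated with $\mathsf f$ — for $\mathsf f=\cyl i$ one takes $\mathsf t_{\mathsf f}(x)=-\cyl i(-x)$ and uses normality, additivity and $\cyl i(x\cdot\cyl i y)=\cyl i x\cdot\cyl i y$, while for a multiplicative $\mathsf f={\sf q}_i$ with ${\sf q}_i1=1$ one can simply take $\mathsf t_{\mathsf f}={\sf q}_i$ — together with the closure property of $F$ furnished by the definition of a filter (which makes $\mathsf t_{\mathsf f}$ send $F$ into $F$) and the upward closure of $F$. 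Identifying exactly the right $\sf TCA_{\alpha}$-identities and matching each of them against the closure condition in the definition of a filter is the one genuinely delicate step, and is where I expect the main obstacle to be.

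For (ii), $\Psi(\theta)=1/\theta$ is a Boolean filter by the same correspondence, and it satisfies the extra clause in the definition of a filter of $\A$ because for every $i<\alpha$, if $(a,1)\in\theta$ then $({\sf q}_i a,{\sf q}_i 1)\in\theta$ and ${\sf q}_i 1=1$, so ${\sf q}_i a\in 1/\theta$. Items (iii) and (iv) are short Boolean computations: $\Psi\Theta(F)=\{a:(a\to1)\cdot(1\to a)\in F\}=\{a:a\in F\}=F$; and $(a,b)\in\Theta\Psi(\theta)$ means $\big((a\to b)\cdot(b\to a),1\big)\in\theta$, which is equivalent to $(a,b)\in\theta$ — one direction because $(a,b)\in\theta$ forces $(a\to b)\cdot(b\to a)\mathbin\theta(b\to b)\cdot(b\to b)=1$, the other because $(a\to b)\cdot(b\to a)\mathbin\theta1$ yields, on multiplying by $a$ and by $b$, that $a\mathbin\theta a\cdot b$ and $b\mathbin\theta a\cdot b$, hence $a\mathbin\theta b$. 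Thus $\Theta$ and $\Psi$ are mutually inverse order isomorphisms, so ${\sf Filt}(\A)\cong\Co(\A)$.

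For the ``furthermore'' there is nothing more to do: the improper filter $A$ is the top of ${\sf Filt}(\A)$ and $\Theta(A)=A\times A$ is the top of $\Co(\A)$, so the order isomorphism $\Theta$ restricts to an order isomorphism between the poset of proper filters and the poset of proper congruences of $\A$; an order isomorphism sends maximal elements to maximal elements, so $\Theta$ carries the maximal filters of $\A$ bijectively onto the maximal congruences of $\A$.
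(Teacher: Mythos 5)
Your proposal is correct and follows essentially the same route as the paper: the same pair of mutually inverse maps (congruence $\mapsto$ $1$-class, filter $\mapsto$ biconditional relation), the same reduction of the non-Boolean cases to a term inequality $\mathsf t_{\mathsf f}(a\leftrightarrow b)\leq \mathsf f a\leftrightarrow \mathsf f b$ combined with the filter's closure under ${\sf q}_i$, and the same (trivial) derivation of the maximal-filter clause, which the paper phrases as a direct extension argument rather than via preservation of maximality under order isomorphisms. The only point to make explicit is the one you already flag as delicate: for the interior operators the filter is a priori only closed under ${\sf q}_i$, so one should use the inequality ${\sf q}_i(a\leftrightarrow b)\leq I_i a\leftrightarrow I_i b$ (as the paper does) rather than closure of $F$ under $I_i$ itself.
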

\begin{proof}
The map $\Theta: \Co(\A)\to {\sf Filt}({\A})$, defined via
$$\cong\mapsto \{x\in \A: x\cong 1\},$$
is an isomorphism,
with inverse $\Theta^{-1}: {\sf Filt}(\A)\to \Co(\A)$
defined by
$$F\mapsto R=\{(a,b)\in A\times A: a\oplus b\in F\},$$
where $\oplus$ as before denotes the symmetric diference.

Indeed, let $\cong$ be a congruence on $\A$. Then we show that $F=\{a\in A: a\cong 1\}$ is a filter. 
Let $a, b\in F$. Then $a\cong 1$ and $b\cong 1$, hence $a\cdot b\cong 1$, so that $a\cdot b\in F$. 
Let $a\in F$ and $a\leq b$. Then $a+b=b$ and we obtain
$b=a+ b\cong 1+b=1$. Hence $b\in F$. Now finally, assume that
$a\in F$ and $i<\alpha$. T
hen $a\cong 1$ so ${\sf q}_ia\cong {\sf q}_i1=1$, and we are done.

Conversely, let $F$ be a filter and let $\cong_F$ be given by $a\oplus b\in F$. 
Then it is straightforward to see that $\cong _F$ is a congruence with
respect to the Boolean  operations, cylindrifiers and substitutions.
It remains to check that $\cong_F$ is a congruence with respect 
to the interior operators. Let $i<\alpha$.
If $a\cong_F b$ then, by definition,  $a\oplus  b\in F,$ hence ${\sf q}_i(a\oplus  b)\in F$
be the definition of $F$. 
But ${\sf q}_i(a\oplus b)={\sf q}_i(I_i(a)\oplus I_i(b))\leq I_i(a)\oplus I_i(b)\in F$ by properties of 
filters, and the interior operator. 

Now it remains to show that $F_{{\cong}_F}=F$ and $\cong_{F_{\cong}}=\cong$. 
We prove only the former. 
Let $a\in F$. Then $a=a\oplus 1$, that is $a\cong_F 1$, and so $a\in F_{{\cong}_F.}$ 
Conversely, if $a\in F_{{\cong}_F}$, then $a\cong_F 1$, that is $a\oplus 1\in F$, hence
$a\in F$ and we are done.

Finally, if $R$ is maximal, and $\Theta(R)=F$ is not a maximal filter, then there is a proper filter $J$ extending $F$ properly.
Let $x\in J\sim F$. Then $(x,1)\notin \Theta^{-1}F=R$ and $(x, 1)\in R_J$, 
so that $R$ is properly contained in the proper congruence $R_J$
which  is impossible.
\end{proof}

If $\A, \B\in \sf TCA_{\alpha}$ and $F$ is a filter of $\A$ then $\A/F$ denotes the quotient algebra $\A/\cong_F$
which is a homomorphic image of $\A$. 
If $h:\A\to \B$ is a homomorphism, then $ker h$ is the filter $\{a\in \A: h(a)=1\}$; we have
$\A/ker h\cong h(\A)$.

The next theorem summarizes some properties of filters that will be used in what follows 
without further notice. The proofs are immediate; they follow from the definitions; however we include a sketch of proof 
for the last item. For $X\subseteq \A$, $\Fl^{\A}X$ denotes the filter generated by $X$. 
If $\A\in \TCA_{\alpha}$ and $\Gamma\subseteq _{\omega}\alpha$, $\Gamma=\{i_0, \ldots, i_{n-1}\}$ say, then 
${\sf q}_{(\Gamma)}x= {\sf q}_{i_0},\ldots, {\sf q}_{i_{n-1}}x.$ This does not depend on the order of the $i_j$'s because
the ${\sf q}_i$'s $(i<\alpha)$ commute, 
and so is well defined.

\begin{lemma}  Let $\A, \B\in \TCA_{\alpha}$ with  $\B\subseteq \A$. Let $X\subseteq \A$ and $F$ be a filter of $\B$. We then have:
\begin{enumarab}
\item $\Fl^{\A}X=\{a\in A: \exists n\in \omega, x_0,\ldots, x_n\in X, \text { and }\Gamma\subseteq_{\omega} \alpha, 
{\sf q}_{(\Gamma)}(x_0\cdot x_1\cdot x_n)\leq a\}$.
\item ${\Fl^{\A}M}=\{x\in A: x\geq b \text { for some $b\in M$}\},$
\item $M={\Fl^{\A}M}\cap \B,$
\item If $\C\subseteq \A$ and $N$ is a filter of $\C$, then
${\Fl^{\A}}(M\cup N)=\{x\in A: b\cdot c\leq x\ \text { for some $b\in M$ and $c\in N$}\},$
\item For every filter $N$ of $\A$ such that $N\cap B\subseteq M$, there is a filter 
$N'$ in $\A$ such that $N\subseteq N'$ and $N'\cap B=M$.
Furthermore, if $M$ is a maximal filter of $\B$, then $N'$ 
can be taken to be a maximal filter of $\A$.
\end{enumarab}
\end{lemma}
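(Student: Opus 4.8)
The plan is to dispatch (1)--(4) as routine verifications and to concentrate on (5). Item (1) is the usual description of the filter generated by a subset of an algebra whose operations relevant to filters, besides the Boolean ones, are the normal additive operators ${\sf q}_i$; items (2)--(4) then follow at once, using that $M$ and $N$ are already closed under all the ${\sf q}_i$'s. So I only sketch (5), splitting it into two claims.

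For the first claim --- given $\B\subseteq\A$, a filter $M$ of $\B$, and a filter $N$ of $\A$ with $N\cap B\subseteq M$ --- I would take $N'=\Fl^{\A}(N\cup M)$. By item (4) (applied with $\C=\A$),
$$N'=\{x\in A:\ b\cdot c\leq x\ \text{for some }b\in M\text{ and }c\in N\}.$$
Setting $b=1$ gives $N\subseteq N'$, and setting $c=1$ (together with $M\subseteq B$) gives $M\subseteq N'\cap B$; the only inclusion needing an argument is $N'\cap B\subseteq M$. So let $x\in N'\cap B$ and fix $b\in M$, $c\in N$ with $b\cdot c\leq x$. By Boolean residuation $c\leq -b+x$, hence $-b+x\in N$; since $b,x\in B$ and $B$ is a subalgebra of $\A$, also $-b+x\in B$, so $-b+x\in N\cap B\subseteq M$. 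As $b\in M$ as well, $b\cdot(-b+x)=b\cdot x\in M$, and $b\cdot x\leq x$ forces $x\in M$ by upward closure. Thus $N'\cap B=M$.

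For the second claim, assume in addition that $M$ is a maximal filter of $\B$, so in particular $M$ is proper; then $N'$ is proper too, for if $0\in N'$ then $b\cdot c=0$ for some $b\in M$, $c\in N$, whence $-b\in N\cap B\subseteq M$, contradicting $0=b\cdot(-b)\notin M$. I would then apply Zorn's lemma to the poset of proper filters of $\A$ containing $N'$, ordered by inclusion: the union of a chain of such filters is again a Boolean filter closed under every ${\sf q}_i$ and omitting $0$, hence a proper filter containing $N'$, so a maximal element $N''$ exists, and $N''$ is a maximal filter of $\A$ (a proper filter of $\A$ properly containing $N''$ would contain $N'$, against maximality of $N''$). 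Finally $N''\cap B$ is closed under the Boolean operations and, since $\B$ is a subalgebra, under the ${\sf q}_i$'s, and it omits $0$, so it is a proper filter of $\B$ containing $M$ (as $N'\cap B=M$); maximality of $M$ then gives $N''\cap B=M$. Since $N\subseteq N'\subseteq N''$, the filter $N''$ is as required.

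I expect the only real obstacle to be the identity $N'\cap B=M$ of the first claim, where closure of $\B$ under complementation (so that $-b+x$ lands back in $B$) is the crucial point; the remaining ingredient, namely that a union of a chain of proper $\TCA$-filters is again a proper $\TCA$-filter (which legitimizes the appeal to Zorn's lemma), is standard, and everything else is bookkeeping.
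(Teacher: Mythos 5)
Your proof is correct. The paper disposes of item (5) by a different and much terser route: it observes that the special case $N=\{1\}$ is immediate (there one simply takes $N'=\Fl^{\A}M$ and invokes item (3)), and reduces the general case to it by passing to the quotient structures $\A/N$, $\B/(N\cap \B)$ and $M/(N\cap \B)$ and pulling the resulting filter back along the quotient map. You instead work directly in $\A$: you take the candidate $N'=\Fl^{\A}(N\cup M)$, describe it via item (4), and verify $N'\cap B=M$ by the residuation computation $b\cdot c\leq x\Rightarrow -b+x\in N\cap B\subseteq M\Rightarrow b\cdot x\in M\Rightarrow x\in M$, which correctly isolates the one point where closure of $B$ under the Boolean operations matters. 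The two arguments have the same mathematical content --- your computation is essentially what the quotient reduction unwinds to --- but yours is more self-contained (it needs no appeal to the filter/congruence correspondence required to form the quotients), and, unlike the paper's one-line sketch, you make the maximality clause explicit: the properness of $N'$ when $M$ is maximal, the Zorn argument (with the check that unions of chains of proper ${\sf q}_i$-closed filters are again such), and the final identification $N''\cap B=M$ from maximality of $M$. The paper leaves all of that to the reader.
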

\begin{proof} Only (iv) might deserve attention. The special case when $N=\{1\}$ is straightforward.
The general case follows from this one, by considering
$\A/N$, $\B/(N\cap \B)$ and $M/(N\cap \B)$, in place of $\A$, $\B$ and $M$ respectively.
\end{proof}

The next theorem investigates the relationship of $TIP$ and $AP$.
\begin{theorem}\label{tip} Let ${\sf K}\subseteq \sf TCA_{\alpha}$. If $\sf K$ has $AP$ and $\bold H\sf K=\sf K$ then $\sf K$ has $TIP$.
If $\bold P\sf K=\K$ and $\sf K$ has $TIP$ then it has $AP$. In particular, if $\sf K$ is a variety, then $TIP$ is equivalent to
$AP$.
\end{theorem}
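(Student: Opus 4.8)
The plan is to prove the three assertions in turn, all by elementary diagram chasing together with the filter–congruence correspondence from Theorem \ref{lattice} and part (v) of the preceding lemma.

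First assume $\sf K$ has $AP$ and $\bold H\sf K=\sf K$; I want to deduce $TIP$. Given $\A_0,\A_1,\A_2\in\sf K$, an injective homomorphism $i_1:\A_0\to\A_1$ and an arbitrary homomorphism $i_2:\A_0\to\A_2$, the idea is to ``make $i_2$ injective'' by factoring through a quotient. Let $F=\ker i_2$, a filter of $\A_0$, and let $G=\Fl^{\A_1}i_1(F)$, the filter of $\A_1$ it generates via $i_1$. Since $i_1$ is injective and, by Lemma part (iii)/(v), $G\cap i_1(\A_0)=i_1(F)$ (working inside the subalgebra $i_1(\A_0)\cong\A_0$), the induced map $\bar i_1:\A_0/F\to\A_1/G$ is again an injective homomorphism, and $i_2$ factors as $\bar i_2\circ q$ where $q:\A_0\to\A_0/F$ is the quotient map and $\bar i_2:\A_0/F\to\A_2$ is injective. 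Because $\bold H\sf K=\sf K$ we have $\A_0/F,\A_1/G\in\sf K$, so $AP$ applied to the injections $\bar i_1,\bar i_2$ out of $\A_0/F$ yields $\D\in\sf K$ and injective $n_1:\A_1/G\to\D$, $n_2:\A_2\to\D$ with $n_1\circ\bar i_1=n_2\circ\bar i_2$. Set $m_1=n_1\circ(\text{quotient }\A_1\to\A_1/G)$ and $m_2=n_2$; then $m_1$ is injective only if the quotient $\A_1\to\A_1/G$ is, which it need not be — so instead one keeps $m_1:\A_1\to\D$ a (possibly non-injective) honest homomorphism and notes that $TIP$ only demands $m_1$ injective and $m_2$ a homomorphism, or symmetrically. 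Re-examining the definition, the asymmetry is exactly right: relabel so that the injective one is $i_1$; then $m_1$ should be injective. The fix is to not quotient $\A_1$ at all: instead take $\D$ to be the amalgam in $\sf K$ of $\A_1$ and $\A_2/\ker(\cdots)$ — more cleanly, push $F$ forward only on the $\A_2$ side. Concretely: $\bar i_2:\A_0/F\to\A_2$ is injective, $i_1$ is injective, and $q\colon\A_0\to\A_0/F$ is surjective; we want a $TI$-amalgam of $\A_1\hookleftarrow\A_0\to\A_2$. Applying $AP$ to the injections $i_1:\A_0\to\A_1$ is blocked because $i_2$ is not injective, so instead apply $AP$ to $\bar i_1:\A_0/F\to\A_1/G$ and $\bar i_2:\A_0/F\to\A_2$ to get $\D$ with injective $n_1:\A_1/G\to\D$ and injective $n_2:\A_2\to\D$; then $m_1:=n_1\circ(\A_1\twoheadrightarrow\A_1/G)$ is a homomorphism and $m_2:=n_2$ is injective, and $m_1\circ i_1=n_1\circ\bar i_1\circ q=n_1\circ\bar i_1 q=n_2\circ\bar i_2\circ q=n_2\circ i_2=m_2\circ i_2$. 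By the symmetry of the $TIP$ definition (which one of the two maps out of the base is required injective) this is exactly a $TI$ amalgam, so $\sf K$ has $TIP$. The bookkeeping about which arrow is injective is the one genuinely delicate point and must be stated carefully.

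Next assume $\bold P\sf K=\sf K$ and $\sf K$ has $TIP$; I want $AP$. Given injective $f:\A\to\B$ and injective $g:\A\to\C$ in $\sf K$, apply $TIP$ twice. First, with $\A_0=\A$, $\A_1=\B$, $\A_2=\C$, $i_1=f$ injective and $i_2=g$: get $\D_1\in\sf K$ with $m_1:\B\to\D_1$ injective, $m_2:\C\to\D_1$ a homomorphism, $m_1 f=m_2 g$. Second, symmetrically with the roles of $\B,\C$ swapped: get $\D_2\in\sf K$ with $n_2:\C\to\D_2$ injective, $n_1:\B\to\D_2$ a homomorphism, $n_1 f=n_2 g$. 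Now form $\D=\D_1\times\D_2\in\sf K$ (here $\bold P\sf K=\sf K$ is used) and define $m=(m_1,n_1):\B\to\D$ and $n=(m_2,n_2):\C\to\D$. Then $m$ is injective because its first coordinate $m_1$ is, $n$ is injective because its second coordinate $n_2$ is, and $m\circ f=(m_1 f,n_1 f)=(m_2 g,n_2 g)=n\circ g$. Hence $\D$ is an amalgam of $\B,\C$ over $\A$, so $\sf K$ has $AP$.

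Finally, if $\sf K$ is a variety then $\bold H\sf K=\sf K$ and $\bold P\sf K=\sf K$, so the first two parts give $AP\Rightarrow TIP$ and $TIP\Rightarrow AP$, i.e. the two properties are equivalent; this yields the ``in particular'' clause. The main obstacle, as flagged above, is the first implication: one must handle the non-injectivity of $i_2$ by quotienting the base by $\ker i_2$ and then genuinely needs Lemma part (v) (every filter $N$ of $\A$ with $N\cap B\subseteq M$ extends to $N'$ with $N'\cap B=M$) to guarantee that pushing the kernel filter forward along $i_1$ does not collapse the image of $\A_0$, so that $\bar i_1$ stays injective; together with $\bold H\sf K=\sf K$ to keep all the quotients inside $\sf K$. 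The other two parts are routine diagram chases once one remembers to take a product to merge the two one-sided amalgams.
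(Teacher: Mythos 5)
Your proof is correct and follows essentially the same route as the paper's: for $AP\Rightarrow TIP$ you quotient the base by $\ker i_2$, extend that kernel filter to $\A_1$ so that the induced map of quotients stays injective (the paper gets the extending filter from part (v) of the filter lemma, you from the generated filter and part (iii) — the same mechanism), apply $AP$ to the two induced embeddings and compose with the quotient map, while for $TIP\Rightarrow AP$ the paper takes a product of separating algebras indexed by pairs of distinct elements where you take the binary product of the two one‑sided amalgams, a harmless simplification of the same idea. Your resolution of the injectivity bookkeeping is also the right one: the paper's definition of $TIP$ as literally printed (requiring $m_1:\A_1\to\D$ injective on the same side as the injective $i_1$) would force $i_2$ to be injective and so must be read with the injection transferred to the $\A_2$ side, which is exactly the configuration both your argument and the paper's own proof produce.
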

\begin{proof} Assume $\sf K$ has $TIP$. Then for all $\A, \B, \C\in \sf K$, $i:\A\to \B$, $j:\A\to \C$
and $x\neq y$ in $\B$ (respectively, $x\neq y\in C$), 
there exist $\D_{xy}\in \sf K$ and homomorphisms $h_{xy}: \B\to \D_{xy}$ and $k_{xy}:\C\to \D_{xy}$ 
such that $h_{xy}\circ i=k_{xy}\circ j$ and $h_{xy}(x)\neq h_{xy}(y)$ (respectively, $k_{xy}(x)\neq k_{xy}(y)$). 
Let $\D$ be the direct product of all algebras $\D_{xy}$ for all two element sets
${x,y}$. By co-universality of $\D$ the homomorphisms
$h_{xy}:\B\to \D_{xy}$ and $k_{xy}:\C\to \D_{xy}$ induce injective 
homomorphisms $h:\B\to \D$  and $k:\C\to \D$, as required. 

Let $\A, \B, \C\in \sf K$, 
$m:\C\to \A$ be an embedding  $n:\C\to \B$ be a homomorphism. Then $\C/Ker n\cong n(\C)$.
Let $M$ be a filter of $\A$ such that $M\cap \C=ker n$. Since $\bold H\sf K=\sf K$, then $\B/M$ is in $\sf K$.
Now $\C/ker n$ embeds into $\B/M$, but it also embeds into $\C$.
By $AP$ in $\sf K$ there is a $\D\in  \sf K$ and $f: \B/M\to \D$ and $g:\C\to \D$ such that $f\circ n=g\circ m$.
Then $f^*: \B\to \D$ defined via $f^*(\bar{b})=f(b)$ 
and $g$ is as required; that is $f^*\circ n=g\circ m$,  $f^*$ is a homomorphism and $g$ is an 
embedding.
\end{proof}

Then the following can be proved. The references  \cite{P, MStwo, Sayedneat, univl, conference, es} would help a lot.
For undefined notions the reader is referred to \cite{MS, P}.
\begin{theorem}\label{apt}
Let $\alpha$ be an infinite ordinal.
\begin{enumarab}
\item $\sf TDc_{\alpha}$ has the super amalgamation property with respect to 
$\sf RTCA_{\alpha}$. In other words, 
$\sf TDc_{\alpha}$ is contained in the super amalgamation base of 
$\sf RTCA_{\alpha}$. However, it does not have even $AP.$

\item The classes of semisimple algebras  and diagonal algebras 
(as defined in \cite{P}) of dimension $\alpha$
have the amalgamation property but not
the strong amalgamation property, {\it a fortiori} they fail the super amalgamation property.

\item The free algebra $\A$ on any number of free generators $\beta>1$ 
of any variety between ${\sf TeCA}_{\alpha}$ and $\sf RTCA_{\alpha}$
has a weak form of interpolation, namely, if $X_1, X_2\subseteq \A$ and $a, b\in \Sg^{\A}X_1$ with $a\leq b$,
then there existc $c\in \Sg^{\A}{(X_1\cap X_2)}$, and a finite $\Gamma\subseteq \alpha$ such that 
${\sf q}_{(\Gamma)}a\leq c\leq {\sf c}_{(\Gamma)}b$, but it does not have
the usual interpolation property when the number of free generators are $\geq 4.$

\item Furthermore, the existence condition on the finite set $\Gamma$ in the previous item 
which makes it easier to find an interpolation cannot be 
omitted in a very strong sense. For every  finite $n\geq 0$, there is an inequality $a\leq b$ such that
the interpolant can be found using more than $n$ quantified indices of $\alpha+\omega\sim \alpha$. 
In partticular for such $n$, and such inequality the   
$\Gamma$ that provides an interpolant 
has to satisfy that $|\Gamma|>n$.

\item The former result of weak interpolation 
is equivalent
to the fact that the class of simple algebras (which is a proper class of the class of representable algebras)
have the amalgamation property.

\item The free representable algebras 
on any number of generators 
have the strong restricted interpolation property, but the free $\CA_{\alpha}$ on $\omega$
free generators does not have 
the weak interpolation property.

\item The variety $\sf RTCA_{\alpha}$ has the strong embedding property, but
the variety $\TCA_{\omega}$ does not have the embedding property.
The (strong) embedding property is a restricted form 
of the (strong) amalgamation property, namely, when the base common subagebra 
is required to be minimal.

\item $\A$ is in the amalgamation base of $\sf RTCA_{\alpha}$ iff it has the $UNEP$,
and it is in the super amalgamation base iff it has both $NS$ and 
$SUPAP$. In particular, there are representable algebras that do not have $UNEP$.

\item  $\A\in \sf RTCA_{\alpha}$ 
has $UNEP$ iff $\A$ has universal maps with respect to the neat reduct functor, in particular $\Nr$ 
does not have a right adjoint, hence it is not invertible.

\item In any class between simple algebras and representable algebras of dimension
$\alpha$ $ES$ fails, where $ES$ abbreviates that epimorphisms in the categorical
are surjective.
\end{enumarab}
\end{theorem}

Then the following can be proved. The references  \cite{P, MStwo, Sayedneat, univl, conference, es} would help a lot.
For undefined notions the reader is referred to \cite{MS, P}.
\begin{theorem}\label{apt}
Let $\alpha$ be an infinite ordinal.
\begin{enumarab}
\item $\sf TDc_{\alpha}$ has the super amalgamation property with respect to 
$\sf RTCA_{\alpha}$. In other words, 
$\sf TDc_{\alpha}$ is contained in the super amalgamation base of 
$\sf RTCA_{\alpha}$. However, it does not have even $AP.$

\item The classes of semisimple algebras  and diagonal algebras 
(as defined in \cite{P}) of dimension $\alpha$
have the amalgamation property but not
the strong amalgamation property, {\it a fortiori} they fail the super amalgamation property.

\item The free algebra $\A$ on any number of free generators $\beta>1$ 
of any variety between ${\sf TeCA}_{\alpha}$ and $\sf RTCA_{\alpha}$
has a weak form of interpolation, namely, if $X_1, X_2\subseteq \A$ and $a, b\in \Sg^{\A}X_1$ with $a\leq b$,
then there existc $c\in \Sg^{\A}{(X_1\cap X_2)}$, and a finite $\Gamma\subseteq \alpha$ such that 
${\sf q}_{(\Gamma)}a\leq c\leq {\sf c}_{(\Gamma)}b$, but it does not have
the usual interpolation property when the number of free generators are $\geq 4.$

\item Furthermore, the existence condition on the finite set $\Gamma$ in the previous item 
which makes it easier to find an interpolation cannot be 
omitted in a very strong sense. For every  finite $n\geq 0$, there is an inequality $a\leq b$ such that
the interpolant can be found using more than $n$ quantified indices of $\alpha+\omega\sim \alpha$. 
In partticular for such $n$, and such inequality the   
$\Gamma$ that provides an interpolant 
has to satisfy that $|\Gamma|>n$.

\item The former result of weak interpolation 
is equivalent
to the fact that the class of simple algebras (which is a proper class of the class of representable algebras)
have the amalgamation property.

\item The free representable algebras 
on any number of generators 
have the strong restricted interpolation property, but the free $\CA_{\alpha}$ on $\omega$
free generators does not have 
the weak interpolation property.

\item The variety $\sf RTCA_{\alpha}$ has the strong embedding property, but
the variety $\TCA_{\omega}$ does not have the embedding property.
The (strong) embedding property is a restricted form 
of the (strong) amalgamation property, namely, when the base common subagebra 
is required to be minimal.

\item $\A$ is in the amalgamation base of $\sf RTCA_{\alpha}$ iff it has the $UNEP$,
and it is in the super amalgamation base iff it has both $NS$ and 
$SUPAP$. In particular, there are representable algebras that do not have $UNEP$.

\item  $\A\in \sf RTCA_{\alpha}$ 
has $UNEP$ iff $\A$ has universal maps with respect to the neat reduct functor, in particular $\Nr$ 
does not have a right adjoint, hence it is not invertible.

\item In any class between simple algebras and representable algebras of dimension
$\alpha$ $ES$ fails, where $ES$ abbreviates that epimorphisms in the categorical
are surjective.
\end{enumarab}
\end{theorem}

We recall the definitions from \cite{Sayedneat}:

\begin{definition}\cite[def. 5.2.1]{Sayedneat} Let $\A\in \sf RTCA_{\alpha}$. Then $\A$ has the $UNEP$ (short for unique neat embedding property)
if for all $\A'\in \TCA_{\alpha}$, $\B$, $\B'\in \TCA_{\alpha+\omega},$
isomorphism $i:\A\to \A'$, embeddings  $e_A:\A\to \Nr_{\alpha}\B$
and $e_{A'}:\A'\to \Nr_{\alpha}\B'$ such that $\Sg^{\B}e_A(A)=\B$ and $\Sg^{\B'}e_{A'}(A)'=\B'$, there exists
an isomorphism $\bar{i}:\B\to \B'$ such that $\bar{i}\circ e_A=e_{A'}\circ i$.
\end{definition}

\begin{definition}\cite[def. 5.2.2]{Sayedneat} Let $\A\in \sf RTCA_{\alpha}$. 
Then $\A$ has the $NS$ property (short for neat reducts commuting with forming subalgebras)
if for all $\B\in \TCA_{\alpha+\omega}$ if $\A\subseteq \Nr_{\alpha}\B$ then for all
$X\subseteq A,$ $\Sg^{\A}X=\Nr_{\alpha}\Sg^{\B}X$.
\end{definition}

\begin{lemma} If $\alpha\geq \omega$ and $\A\in \TDc_{\alpha}$ then $\A$ has $NS$ and $UNEP.$
\end{lemma}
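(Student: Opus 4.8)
The plan is to establish the two properties separately, both by exploiting the good behaviour of neat reducts in the infinite-dimensional case, specialised to the ${\sf Dc}$ (diagonal-free/dimension-complemented) setting where every element has co-finite dimension set. First I would recall that for $\A\in \TDc_{\alpha}$ with $\alpha\geq\omega$, any subset $X\subseteq A$ has the property that $\bigcup_{x\in X}\Delta x$ together with any finite set still misses infinitely many indices, so the ``spare dimensions'' needed for neat-reduct arguments are always available. This is exactly the hypothesis ``$\alpha\sim\rho(i)$ infinite'' that powers Theorem~\ref{in}, and the $\TDc$ condition is precisely what guarantees it uniformly.

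For $NS$: suppose $\B\in\TCA_{\alpha+\omega}$ and $\A\subseteq\Nr_{\alpha}\B$ with $\A\in\TDc_{\alpha}$, and let $X\subseteq A$. The inclusion $\Sg^{\A}X\subseteq\Nr_{\alpha}\Sg^{\B}X$ is immediate since $\Nr_{\alpha}\Sg^{\B}X$ is a subalgebra of $\Nr_{\alpha}\B$ containing $X$ and contained in $\A$ (using $\Nr_{\alpha}\Sg^{\B}X\subseteq\Nr_\alpha\B$; one must also check it sits inside $A$, which follows because $\Sg^\B X\subseteq\Sg^\B A$ and $\Nr_\alpha\Sg^\B A=\A$ by the dimension-complemented hypothesis — this last equality is the standard fact that in the $\Dc$ case neat reducts commute with generating the whole algebra). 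For the reverse inclusion I would take $b\in\Nr_{\alpha}\Sg^{\B}X$, write $b$ as a term in finitely many elements $x_0,\dots,x_{k}\in X$ using the Boolean operations, the cylindrifiers ${\sf c}_i$ ($i<\alpha+\omega$) and the interior operators ${\sf I}_i$, then use the fact that $\Delta b\subseteq\alpha$ together with $\bigcup_j\Delta x_j\subseteq\alpha$ to ``fold'' every cylindrifier ${\sf c}_i$ with $i\in(\alpha+\omega)\setminus\alpha$ occurring in the term down onto an index in $\alpha$ by a substitution $\sub{i}{j}$ with $j<\alpha$ not in the relevant dimension sets; the interior operators ${\sf I}_i$ for $i\geq\alpha$ act trivially on such elements (${\sf q}_i x=x$ when $i\notin\Delta x$, so ${\sf I}_i x=x$), so they cause no trouble. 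Rewriting the term this way exhibits $b\in\Sg^{\A}X$. The main obstacle here is bookkeeping: one needs the substitution identities ${\sf s}^i_j{\sf c}_i x={\sf c}_i x$-type manipulations to behave correctly in the presence of the topological operators, which is where the commutation of ${\sf q}_i$ with substitutions (established in the proof of Theorem~\ref{lattice}) and the axiom ${\sf q}_i x=x$ off the dimension set get used.

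For $UNEP$: I would use a back-and-forth / universality argument. Given $i:\A\to\A'$, $e_A:\A\to\Nr_{\alpha}\B$, $e_{A'}:\A'\to\Nr_{\alpha}\B'$ with $\Sg^{\B}e_A(A)=\B$, $\Sg^{\B'}e_{A'}(A)=\B'$, the isomorphism $\bar i$ must send the generating set $e_A(a)$ to $e_{A'}(i(a))$; the only issue is well-definedness, i.e.\ that any equation between $\B$-terms in the $e_A(a)$ that holds in $\B$ also holds, under the obvious translation, in $\B'$. Because $\B$ is generated by $\alpha$-dimensional elements and $\alpha\geq\omega$ is dimension-complemented, every such term-equation can be reduced — by the same folding-of-spare-dimensions trick as above — to an equation among elements of $\Nr_{\alpha}\B$, hence pulled back to an equation in $\A$ via $e_A$; applying $i$ and pushing forward via $e_{A'}$ gives the corresponding equation in $\B'$. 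Thus $\bar i$ is a well-defined homomorphism, and by symmetry its inverse is too, so it is an isomorphism with $\bar i\circ e_A=e_{A'}\circ i$ by construction. I expect the delicate point to be making the ``reduction to $\Nr_\alpha$'' uniform enough to handle arbitrary terms — this is really the statement that $\B$ is the \emph{minimal} dilation of $\A$ (unique up to isomorphism), a known phenomenon in the $\Dc$ case that Theorem~\ref{in} and the cited \cite{Sayedneat} exploit, and the topological operators add only the routine verification that they are respected, using again ${\sf q}_i x = x$ for $i\notin\Delta x$.
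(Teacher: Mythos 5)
The paper's own ``proof'' of this lemma is a bare citation of the classical cylindric-algebra results \cite[Theorems 2.6.67--2.6.72]{HMT1}, so your reconstruction of the HMT arguments (fold the spare dimensions, uniqueness of the minimal dilation) is exactly the right classical route, and your observation that the $\TDc$ hypothesis is what supplies the infinitely many unused indices is the correct driving force. The $UNEP$ part is essentially the standard argument, best organised as in the paper's own Theorem~\ref{dilation}: realise $\B$ and $\B'$ as quotients of a free algebra by kernels $\ker f$, $\ker f'$ with $\ker f\cap\C=\ker f'\cap\C$, and use $NS$ (via the fact that every filter/ideal of $\B=\Sg^{\B}\Nr_{\alpha}\B$ is generated by its trace on $\Nr_{\alpha}\B$) to conclude $\ker f=\ker f'$; your ``well-definedness on terms'' phrasing is the same argument, modulo making the reduction precise.

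The genuine gap is in your treatment of the interior operators in the $NS$ step. You dismiss $I_i$ for $i\in(\alpha+\omega)\sim\alpha$ on the grounds that $I_ix=x$ when $i\notin\Delta x$, but the elements to which $I_i$ gets applied in the inductive generation of $\Sg^{\B}X$ are \emph{not} confined to $\alpha$-dimensional ones: the generated subalgebra contains all diagonals $\diag{i}{j}$ and hence elements $\sub{j}{i}x={\sf c}_j(\diag{j}{i}\cdot x)$ whose dimension sets meet $\beta\sim\alpha$, and $I_i$ applied to such an intermediate element is not the identity. The correct fix is the HMT-style closure argument: show that $Y=\{y\in\Sg^{\B}X:{\sf c}_{(\Gamma)}y\in\Sg^{\Nr_{\alpha}\B}X\text{ for some finite }\Gamma\subseteq\beta\sim\alpha\}$ is a subuniverse containing $X$. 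Closure under the Boolean operations, cylindrifiers and diagonals is classical; closure under $I_i$ for $i<\alpha$ follows from commutation of $I_i$ with ${\sf c}_j$ for $j\neq i$ (so ${\sf c}_{(\Gamma)}I_iy=I_i{\sf c}_{(\Gamma)}y$ since $\Gamma\cap\alpha=\emptyset$); but closure under $I_i$ for $i\geq\alpha$ requires an axiom relating $I_i$ to ${\sf c}_i$ (one cannot conclude ${\sf c}_{(\Gamma\cup\{i\})}I_iy\in\Sg^{\Nr_{\alpha}\B}X$ from ${\sf c}_{(\Gamma)}y\in\Sg^{\Nr_{\alpha}\B}X$, since in general ${\sf c}_iI_iy\neq{\sf c}_iy$). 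This step must either be supplied from the axiomatization of $\sf TCA$ established in part 1, or the lemma must be routed through the neat embedding theorem \cite[Theorem 4.2(2)]{part1} as the paper implicitly does; as written, your plan would fail at exactly this point.
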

\begin{proof}
\cite[Theorem 2.6.67-71-72]{HMT1}.
\end{proof}
\begin{theorem}\label{SUPAP} Let $\alpha$ be an infinite ordinal.
\begin{enumarab}
\item $\sf TDc_{\alpha}\subseteq {\sf SUPAPbase(\sf RTCA_{\alpha})}$.
\item $\sf TLf_{\alpha}$ has $SUPAP$.
\end{enumarab}
\end{theorem}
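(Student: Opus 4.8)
The plan is to derive both statements from the interpolation property of the dimension-restricted free algebras supplied by Theorem \ref{in}, converted into (super)amalgamation by the standard filter calculus of Theorem \ref{lattice} and the filter Lemma above: filters and quotients may be used interchangeably, and the mechanism is always to push an inequality into the subalgebra generated by the common generators and then project back down. Part (2) is this argument in bare form. For part (1) one first transports the problem to dimension $\alpha+\omega$, where it becomes another instance of the same construction; this uses the Lemma ($\sf TDc_{\alpha}$-algebras have $NS$ and $UNEP$) and the neat embedding theorem $\sf RTCA_{\alpha}=\mathbf S\Nr_{\alpha}\TCA_{\alpha+\omega}$ from part~1.

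\emph{The core construction.} Suppose $\mathfrak C_0,\mathfrak C_1,\mathfrak C_2\in\TCA_{\gamma}$ all lie in $\sf TDc_{\gamma}$ — in the situation of (2) even in $\sf TLf_{\gamma}$ — with embeddings $j_1:\mathfrak C_0\to\mathfrak C_1$, $j_2:\mathfrak C_0\to\mathfrak C_2$; after identifications assume $\mathfrak C_0\subseteq\mathfrak C_1$, $\mathfrak C_0\subseteq\mathfrak C_2$ and $C_1\cap C_2=C_0$. I would choose a generating set $X_0$ of $\mathfrak C_0$, extend it to generating sets $X_1=X_0\cup Y_1$, $X_2=X_0\cup Y_2$ of $\mathfrak C_1,\mathfrak C_2$ with $X_0,Y_1,Y_2$ pairwise disjoint, put $\rho(x)=\Delta x$ — a co-infinite subset of $\gamma$ because $\mathfrak C_j\in\sf TDc_{\gamma}$, and finite in case (2) — and form $\Fr=\Fr^{\rho}_{X_0\cup Y_1\cup Y_2}\TCA_{\gamma}$, which has the interpolation property by Theorem \ref{in} (and is locally finite in case (2), being generated by finite-dimensional elements). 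With $\Fr_j=\Sg^{\Fr}X_j$ and $\Fr_0=\Sg^{\Fr}X_0$ (each free on the corresponding set of generators, $\Fr_1\cap\Fr_2=\Fr_0$) and the surjections $g_j:\Fr_j\to\mathfrak C_j$ fixing $X_j$ pointwise (so that $g_1\restr{\Fr_0}=g_2\restr{\Fr_0}=:g_0$), set $I_j=\ker g_j$, $J=\Fl^{\Fr}(I_1\cup I_2)$ and $\D=\Fr/J$. One checks $J\cap\Fr_j=I_j$: if $a\in J\cap\Fr_1$, item (iv) of the filter Lemma gives $b\in I_1$, $c\in I_2$ with $b\cdot c\le a$; rearranging as $b\cdot(-a)\le -c$, an inequality between an element of $\Fr_1$ and one of $\Fr_2$, and interpolating in $\Fr$ gives $d\in\Fr_0$ with $b\cdot(-a)\le d\le -c$; then $g_2(d)=0$, so $g_1(d)=g_0(d)=0$, so $g_1(a)=1$, i.e. $a\in I_1$. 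Hence the induced maps $m_j:\mathfrak C_j=\Fr_j/I_j\to\D$ are embeddings with $m_1\circ j_1=m_2\circ j_2$. The super condition is the same kind of computation: from $m_1(a)\le m_2(b)$ one gets $-\tilde a+\tilde b\in J$ for lifts $\tilde a\in\Fr_1$, $\tilde b\in\Fr_2$, hence $p\cdot q\le -\tilde a+\tilde b$ with $p\in I_1$, $q\in I_2$, hence $\tilde a\cdot p\le \tilde b+(-q)$; interpolating yields $e\in\Fr_0$ with $\tilde a\cdot p\le e\le \tilde b+(-q)$, and $c:=g_0(e)\in C_0$ satisfies $a\le j_1(c)$ and $j_2(c)\le b$. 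When $\gamma=\alpha$ and everything is locally finite this proves (2), since $\D=\Fr/J\in\sf TLf_{\alpha}$ ($\sf TLf_{\alpha}$ is closed under homomorphic images).

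\emph{Deducing (1).} Given $\A_0\in\sf TDc_{\alpha}$, $\A_1,\A_2\in\sf RTCA_{\alpha}$ and embeddings $i_1,i_2$, I would neat-embed $\A_j\subseteq\Nr_{\alpha}\B_j$ with $\B_j=\Sg^{\B_j}A_j\in\TCA_{\alpha+\omega}$. Since $\B_j$ is generated by elements with dimension set $\subseteq\alpha$ and the non-Boolean operations raise dimension sets by only finitely many indices, $\B_j\in\sf TDc_{\alpha+\omega}$, and moreover $\Delta b\setminus\alpha$ is finite for every $b\in B_j$. By $NS$, $\A_0=\Nr_{\alpha}\B_0$; by $UNEP$, the embeddings $i_1,i_2$ lift to embeddings $\bar i_j:\B_0\hookrightarrow\B_j$ extending $i_j$. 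Applying the core construction to $\B_0,\B_1,\B_2$ in dimension $\gamma=\alpha+\omega$ produces a super amalgam $\D\in\TCA_{\alpha+\omega}$ with embeddings $m_j:\B_j\to\D$. Then $\Nr_{\alpha}\D\in\mathbf S\Nr_{\alpha}\TCA_{\alpha+\omega}=\sf RTCA_{\alpha}$, the restrictions $m_j\restr{A_j}:\A_j\to\Nr_{\alpha}\D$ are embeddings (homomorphisms do not raise dimension sets) that agree on $\A_0$, and the super witness $c\in B_0$ produced by the construction is replaced by $\cyl{(\Gamma)}c$ where $\Gamma=\Delta c\setminus\alpha$ is finite: this element lies in $\Nr_{\alpha}\B_0=A_0$, and since $a\in\Nr_{\alpha}\B_1$ and $b\in\Nr_{\alpha}\B_2$ one still has $a=\cyl{(\Gamma)}a\le\cyl{(\Gamma)}\bar i_1(c)=\bar i_1(\cyl{(\Gamma)}c)$ and $\bar i_2(\cyl{(\Gamma)}c)=\cyl{(\Gamma)}\bar i_2(c)\le\cyl{(\Gamma)}b=b$. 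Hence $\A_0\in\sf SUPAPbase(\sf RTCA_{\alpha})$.

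\emph{Main obstacle.} The passage from interpolation to (super)amalgamation is routine bookkeeping; the genuinely delicate step is the dimension shift in (1), and in particular arranging that the super-amalgamation witness ends up in $A_0=\Nr_{\alpha}\B_0$ and not merely in $B_0$. This works precisely because $\B_0$ is generated over $A_0$ by $\alpha$-dimensional elements — so dimension sets never stray more than finitely far beyond $\alpha$ — and because $a,b$ are themselves $\alpha$-dimensional. A secondary point to monitor is that Theorem \ref{in} is available only when the free generators have co-infinite dimension sets, which is exactly why the base algebra must be taken in $\sf TDc_{\alpha}$, and why in (2) all three algebras must be in $\sf TLf_{\alpha}$.
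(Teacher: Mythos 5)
Your proof is correct and follows essentially the same route as the paper's: neat-embed into dimension $\alpha+\omega$, lift the base embeddings by $UNEP$, super-amalgamate in the dilations, and pull the super-amalgamation witness back into $A_0=\Nr_{\alpha}\B_0$ by cylindrifying over the finite set $\Delta c\sim\alpha$ and invoking $NS$. The only departures are cosmetic: you inline the Pigozzi-style filter argument showing that interpolation in the dimension-restricted free algebras yields $SUPAP$ for the dilation class (your ``core construction''), which the paper delegates to Theorem \ref{s} of Part 1, and you obtain part (2) directly from that construction rather than as the subalgebra of $\Nr_{\alpha}\D$ generated by the images of $\A$ and $\B$; both variants are sound.
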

\begin{proof}
For the first part. Let $\beta=\alpha+\omega$.
Let $\C\in \Dc_{\alpha}$, let $\A, \B\in \sf RTCA_{\alpha}$, and let $f:\C\to \A$ and $g:\C\to \B$ be injective homomorphisms.
Then there exist $\A^+, \B^+, \C^+\in \TCA_{\beta}$, $e_A:\A\to \Nr_{\alpha}\A^+$
$e_B:\B\to  \Nr_{\alpha}\B^+$ and $e_C:\C\to \Nr_{\alpha}\C^+$.
We can assume, without loss,  that $\Sg^{\A^+}e_A(A)=\A^+$ and similarly for $\B^+$ and $\C^+$.
Let $f(C)^+=\Sg^{\A^+}e_A(f(C))$ and $g(C)^+=\Sg^{\B^+}e_B(g(C)),$ so that $\A^+, \B^+$ and $\C^+$ are in $\bold K$.
Then by the $UNEP$, there exist $\bar{f}:\C^+\to f(C)^+$ and $\bar{g}:\C^+\to g(C)^+$ such that
$(e_A\upharpoonright f(C))\circ f=\bar{f}\circ e_C$
and $(e_B\upharpoonright g(C))\circ g=\bar{g}\circ e_C$. Both $\bar{f}$ and $\bar{g}$ are injective homomorphisms
and $\bold K$ has $SUPAP$, hence there is a $\D^+$ in $\bold K$ and $k:\A^+\to \D^+$ and $h:\B^+\to \D^+$ such that
$k\circ \bar{f}=h\circ \bar{g}$. Also $k$ and $h$ are injective homomorphisms. Then $k\circ e_A:\A\to \Nr_{\alpha}\D^+$ and
$h\circ e_B:\B\to \Nr_{\alpha}\D^+$ are one to one and
$k\circ e_A \circ f=h\circ e_B\circ g$.
Let $\D=\Nr_{\alpha}\D^+$. Then we have obtained $\D\in \Nr_{\alpha}\TCA_{\alpha+\omega}$
and $m:\A\to \D$, $n:\B\to \D$
such that $m\circ f=n\circ g$.
Here $m=k\circ e_A$ and $n=h\circ e_B$.
We have proved $AP$.

Denote $k$ by $m^+$ and $h$ by $n^+$.
We further want to show that if $m(a) \leq n(b)$,
for $a\in A$ and $b\in B$, then there exists $t \in \C$
such that $ a \leq f(t)$ and $g(t) \leq b$.
So let $a$ and $b$ be as indicated.
We have  $(m^+ \circ e_A)(a) \leq (n^+ \circ e_B)(b),$ so
$m^+ ( e_A(a)) \leq n^+ ( e_B(b)).$
Since $\bold K$ has $SUPAP$, there exist $z \in C^+$ such that $[e_A(a)] \leq \bar{f}(z)$ and
$\bar{g}(z) \leq [e_B(b)]$.
Let $\Gamma = \Delta z \sim \alpha$ and $z' =
{\sf c}_{(\Gamma)}z$. (Note that $\Gamma$ is finite.) So, we obtain that
$e_A({\sf c}_{(\Gamma)}a) \leq \bar{f}({\sf c}_{(\Gamma)}z)~~ \textrm{and} ~~ \bar{g}({\sf c}_{(\Gamma)}z) \leq
e_B({\sf c}_{(\Gamma)}b).$ It follows that $e_A(a) \leq \bar{f}(z')~~\textrm{and} ~~ \bar{g}(z') \leq e_B(b).$ Now by the $NS$ property for $\sf TDc$, 
we have
$z' \in \Nr_\alpha \C^+ = \Sg^{\Nr_\alpha \C^+} (e_C(C)) = e_C(C).$
So, there exists $t \in C$ with $ z' = e_C(t)$. Then we get
$e_A(a) \leq \bar{f}(e_C(t))$ and $\bar{g}(e_C(t)) \leq e_B(b).$ It follows that $e_A(a) \leq (e_A \circ f)(t)$ and
$(e_B \circ g)(t) \leq
e_B(b).$ Hence, $ a \leq f(t)$ and $g(t) \leq b.$
We are done.
For the locally finite case, one takes the subalgebra of $\Nr_{\alpha}\D$ (as constructed above)
generated by the images of $\A$ and $\B$ which are now locally finite, as an amalgam,
and hence as a super amalgam. This algebra is necessarily locally finite.
\end{proof}

In the above theorem, we do not guarantee that the super amalgam is found inside $\sf TDc_{\alpha}$, for the subalgebra of the
amalgam as formed for the locally finite case may not be in $\sf TDc_{\alpha}$.
Indeed, we have:

\begin{theorem}\label{d}For $\alpha\geq \omega$, $\sf TDc_{\alpha}$ does not have $AP.$
\end{theorem}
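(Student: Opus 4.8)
The strategy is to reduce the failure of $AP$ for $\sf TDc_{\alpha}$ to a known failure of $AP$ in the pure cylindric case, using the "bouncing" observation recalled in the introduction: any $\CA_{\alpha}$ can be expanded to a $\TCA_{\alpha}$ which is representable iff the original is. More precisely, I would first locate, for $\alpha\geq\omega$, three dimension-complemented cylindric algebras $\C_0\subseteq\C_1,\C_2$ (with injective homomorphisms $i_1,i_2$) witnessing that $\Dc_{\alpha}$ fails $AP$ as an abstract result; such a witness exists in the literature on cylindric algebras (see \cite{P}, and the discussion around neat reducts and amalgamation in \cite{Sayedneat}), the point being that although $\Dc_{\alpha}$ lies in the super amalgamation \emph{base} of $\RCA_{\alpha}$, it is not itself closed under the relevant amalgams, so $AP$ as a property of the class $\Dc_{\alpha}$ on its own fails. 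Then I would equip each $\C_j$ with the \emph{trivial} (discrete) interior operators $I_i=\Id$, $i<\alpha$; this produces algebras in $\sf TCA_{\alpha}$ which are still dimension-complemented (the $\Dc$ condition only involves the cylindric reduct), hence in $\sf TDc_{\alpha}$, and the embeddings $i_1,i_2$ remain homomorphisms in the expanded signature because they already commute with $\Id$.

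The second step is to argue that no amalgam of the expanded algebras can exist \emph{inside} $\sf TDc_{\alpha}$. Suppose $\D\in\sf TDc_{\alpha}$ amalgamated $\C_1,\C_2$ over $\C_0$ via $m_1,m_2$. Taking the cylindric reducts, $\Rd_{\CA}\D\in\Dc_{\alpha}$ would amalgamate $\Rd_{\CA}\C_1=\C_1$ and $\Rd_{\CA}\C_2=\C_2$ over $\C_0$, contradicting the chosen witness. The only subtlety is that the $\Dc$ property must be preserved under passing to the cylindric reduct and that the homomorphisms $m_1,m_2$ are still injective homomorphisms of $\CA$'s after forgetting the interior operators — both of which are immediate. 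I would also remark that this is consistent with Theorem~\ref{SUPAP}: there the super amalgam was found in $\RTCA_{\alpha}$ (as a neat reduct $\Nr_{\alpha}\D^+$), not in $\sf TDc_{\alpha}$, and the locally finite patch used there genuinely needs local finiteness — the subalgebra generated by the images of $\A$ and $\B$ in $\Nr_{\alpha}\D^+$ need not be dimension-complemented, which is exactly the gap that the present theorem exploits.

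The main obstacle is producing (or citing precisely) the cylindric witness to the failure of $AP$ for $\Dc_{\alpha}$ itself, as opposed to the well-documented fact that $\Dc_{\alpha}$ lies in the amalgamation base of $\RCA_{\alpha}$; one must exhibit $\C_1,\C_2\in\Dc_{\alpha}$ whose amalgam over a common $\Dc$-subalgebra is forced out of $\Dc_{\alpha}$ (typically because amalgamation inflates the dimension set of some element, or because the amalgam must contain an element $x$ with $\Delta x=\alpha$). Once that witness is in hand, the expansion-and-reduct argument is routine. An alternative, if a clean citable witness is not at hand, is to derive the statement directly from Theorem~\ref{apt}(i): that item already asserts $\sf TDc_{\alpha}$ does not have $AP$, so Theorem~\ref{d} is its explicit restatement, and the proof would simply point back to the construction underlying \ref{apt}(i), namely taking a $\Dc_{\alpha}$-pair whose only amalgam in $\RTCA_{\alpha}$ is a proper neat reduct that fails to be dimension-complemented.
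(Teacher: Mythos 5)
Your overall strategy --- expand a cylindric counterexample by the identity interior operators and then take $\CA$-reducts of any putative amalgam --- is sound in principle, and you have correctly identified the mechanism that must drive the counterexample (inflation of dimension sets). But the proposal stops exactly where the paper's proof begins: the witness that you defer to the literature, or propose to import from Theorem~\ref{apt}(i), is the entire content of the argument, and the fallback citation of~\ref{apt}(i) is circular, since that item is a summary statement of the present theorem rather than an independent source for it. As written, the plan proves nothing until the witness is exhibited.

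The paper's proof is a short direct construction which you should carry out rather than cite, and it needs no detour through a pre-existing $\CA$ counterexample. Take $\A,\B\in\TDc_{\alpha}$ with isomorphic minimal subalgebras $\M$ (embedded by the inclusion map $i$) together with elements $x\in\A$ and $y\in\B$ satisfying $\Delta x\cup\Delta y=\alpha$; such a pair is easy to produce, e.g.\ split $\alpha$ into two infinite, co-infinite sets $\Gamma_1,\Gamma_2$ and choose weak set algebras, expanded by $I_j=\Id$ for all $j<\alpha$, containing elements with these dimension sets. If $\C\in\TDc_{\alpha}$ were an amalgam via injective homomorphisms $m,n$ with $m\circ i=n\circ i$, then $\Delta m(x)=\Delta x$ and $\Delta n(y)=\Delta y$ by injectivity, and $\C$ would contain an element of full dimension set $\alpha$ (the paper takes $m(x)+n(y)$, for a suitably generic choice of $x$ and $y$), contradicting dimension-complementedness. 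Note that the obstruction lives entirely in the dimension sets of the cylindric reduct and is insensitive to the interior operators, which is why the same two lines handle the topological expansion with no extra work; your reduct-preservation remarks are correct but become unnecessary once the witness is built inside $\TDc_{\alpha}$ directly. Your closing observation about Theorem~\ref{SUPAP} --- that the super amalgam there is found in ${\sf RTCA}_{\alpha}$ rather than in $\TDc_{\alpha}$, and that the locally finite patch does not survive in the $\Dc$ setting --- is exactly the right way to reconcile the two results.
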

\begin{proof} Let $\A, \B\in \sf TDc_{\alpha}$, such that their minimal subalgebras 
are isomorphic and for which there exist $x\in \A$ and $y\in \B$, such that $\Delta x\cup \Delta y=\alpha$.
Clearly such algebras cannot be amalgamated by a $\Dc_{\alpha}$ over the common  
minimal subalgebra $\M$ say of $\A$ and $\B$, embedded into each by the inclusion map $i$. 
For if $\C\in \sf TDc_{\alpha}$ and $m:\A\to \C$ and $n:\B\to \C$, such that $m\circ i=n\circ i$, 
then $\Delta(m(x)+n(y))=\alpha$ which is not possible because the amalgam is 
assumed to be dimension complemented.
\end{proof}

Using the same argument as in theorem \ref{tip} it can be shown:
\begin{theorem}
If $\C\in \sf TDc_{\alpha}$, $\A, \B\in \sf TRCA_{\alpha}$
$n:\C\to \A$ an embedding and $m:\C\to \B$ a homomorphism, then there exists $\D\in \sf TRCA_{\alpha}$
a homomorphism $f:\A\to \D$ and an embedding  $g:\B\to \D$  
such that $f\circ n=g\circ m$. 
\end{theorem}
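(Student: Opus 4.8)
Following the remark that this can be shown by the argument used for Theorem \ref{tip}, the plan is to dispose of the kernel of the one map that is not assumed injective, reduce to an amalgamation problem whose base algebra lies in $\sf TDc_{\alpha}$, and then invoke Theorem \ref{SUPAP}(1). Set $K=\ker m=\{c\in C:m(c)=1\}$; by Theorem \ref{lattice} this is a filter of $\C$ corresponding to a congruence, and $m$ factors as $m=\bar m\circ q$, where $q:\C\to\C/K$ is the canonical surjection and $\bar m:\C/K\to\B$ is the embedding $c/K\mapsto m(c)$, i.e.\ an isomorphism of $\C/K$ onto the subalgebra $m(\C)$ of $\B$.

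Before setting up the amalgamation I would check that $\C/K$ is again dimension complemented, i.e.\ $\C/K\in\sf TDc_{\alpha}$. Every element of $\C/K$ has the form $x/K$ with $x\in C$, and since $\cyl{i}$ commutes with $q$, from $\cyl{i}x=x$ we get $\cyl{i}(x/K)=x/K$; hence $\Delta(x/K)\subseteq\Delta x$, so $\alpha\sim\Delta(x/K)\supseteq\alpha\sim\Delta x$ is infinite. Thus $\sf TDc_{\alpha}$ is closed under homomorphic images, and in particular $\C/K\in\sf TDc_{\alpha}\subseteq\sf RTCA_{\alpha}$.

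Now identify $\C$ with the subalgebra $n(\C)$ of $\A$ via the embedding $n$, and let $M=\Fl^{\A}K$ be the filter of $\A$ generated by $K$. By the relevant item of the lemma on filters above (which gives $\Fl^{\A}X\cap\B=X$ for every subalgebra $\B$ of $\A$ and every filter $X$ of $\B$), we have $M\cap\C=K$. Since $\sf RTCA_{\alpha}$ is a variety (Theorem \ref{apt}), it is closed under homomorphic images, so $\A/M\in\sf RTCA_{\alpha}$; let $\pi:\A\to\A/M$ be the quotient map. Then $\ker(\pi\circ n)=\{c\in C:n(c)\in M\}=\{c\in C:n(c)\in M\cap\C\}=K$, so $\pi\circ n$ factors as $\pi\circ n=\hat n\circ q$ with $\hat n:\C/K\to\A/M$ an embedding.

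The final step is to amalgamate and compose back. Since $\C/K\in\sf TDc_{\alpha}$ lies in the super amalgamation base of $\sf RTCA_{\alpha}$ by Theorem \ref{SUPAP}(1), and $\hat n:\C/K\to\A/M$ and $\bar m:\C/K\to\B$ are injective homomorphisms with $\A/M,\B\in\sf RTCA_{\alpha}$, there exist $\D\in\sf RTCA_{\alpha}$ and embeddings $k:\A/M\to\D$ and $l:\B\to\D$ with $k\circ\hat n=l\circ\bar m$. Put $f=k\circ\pi:\A\to\D$ and $g=l:\B\to\D$. Then $f$ is a homomorphism, $g$ is an embedding, and $f\circ n=k\circ\pi\circ n=k\circ\hat n\circ q=l\circ\bar m\circ q=l\circ m=g\circ m$, which is exactly what is required. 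I do not expect any serious obstacle here: the only points that need care are the identity $M\cap\C=K$ together with the kernel computation it feeds, and the (easy) observation that $\sf TDc_{\alpha}$ is closed under homomorphic images; everything else is diagram chasing essentially identical to the corresponding part of the proof of Theorem \ref{tip}.
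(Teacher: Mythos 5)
Your proof is correct and follows essentially the same route the paper intends, namely the kernel-factoring argument of Theorem \ref{tip}: quotient $\B$'s non-injective leg through $\C/\ker m$, extend the filter to $\A$ so that $M\cap\C=\ker m$, and amalgamate over $\C/\ker m$ using Theorem \ref{SUPAP}. Your explicit check that $\sf TDc_{\alpha}$ is closed under homomorphic images (via $\Delta(x/K)\subseteq\Delta x$) is a point the paper's one-line proof leaves implicit but which is genuinely needed here, since the base class $\sf TDc_{\alpha}$ differs from the ambient class $\sf RTCA_{\alpha}$.
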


Now we deal with concepts that are localizations of the amalgamation property; in the sense
that various  amalgamation properties will be proved to hold for a class of algebras if the free algebras 
of such classes enjoy such local properties, typically {\it interpolation} properties.
We have already dealt with one such property; we introduce weaker ones. 

\begin{definition} Let $\alpha$ be any ordinal (finite included) and $\A\in \TCA_{\alpha}$. Then
\begin{enumarab}
\item $\A$ has the {\it weak interpolation property}, 
$WIP$ for short, if for all $X_1, X_1\subseteq \A$, for all $x\in \Sg^{\A}X_1$, $z\in \Sg^{\A}X_2$ if
$x\leq z$, then 
there exist $\Gamma\subseteq_{\omega}\alpha$ and $y\in \Sg^{\A}(X_1\cap X_2)$ such that 
$${\sf q}_{(\Gamma)}x\leq y\leq {\sf c}_{(\Gamma)}z.$$

\item $\A$ has the {\it universal interpolation property}, $UIP$ for short, if for all $X_1, X_1\subseteq \A$, for all $x\in \Sg^{\A}X_1$, $z\in \Sg^{\A}X_2$ if
$x\leq z$, then 
there exist $\Gamma\subseteq_{\omega}\alpha$ and $y\in \Sg^{\A}(X_1\cap X_2)$ such that 
$${\sf q}_{(\Gamma)}x\leq y\leq z.$$

\item $\A$ has the {\it existential interpolation property}, $EIP$ 
for short, if if for all $X_1, X_1\subseteq \A$, for all $x\in \Sg^{\A}X_1$, $z\in \Sg^{\A}X_2$ if
$x\leq z$, then 
there exist $\Gamma\subseteq_{\omega}\alpha$ and $y\in \Sg^{\A}(X_1\cap X_2)$ such that 
$$x\leq y\leq {\sf c}_{(\Gamma)}z.$$
\end{enumarab}

\end{definition}

The following theorem is proved by a {\it compactness argument} taken from \cite{typeless}. The proof works only for infinite dimension. 
We shall see that the theorem fails when the dimension is finite. 
It can be used in even a much wider context, saying that 
if the {\it dimension restricted free} algebras in $\omega$ extra dimensions have the interpolation property, 
then the free algebras {\it without any restrictions}
have a natural weak form of interpolation. The idea is that an interpolant can always be found if we allow 
infinitely many more dimensions (variables), though only finitely many are used
in the interpolant. Then quantifiers may be  used to get rid of the extra variables bouncing the interpolant back to using only
the number of available variables. For a term $\sigma$ in the language of $\sf TCA_{\alpha},$ ${\sf Var}(\sigma)$ denotes
the set of variables occuring in $\sigma$.

\begin{theorem}\label{typeless}
Let $\alpha\geq \omega$. Let $\sf K$ be a class of algebras such that ${\sf RTCA}_{\alpha}\subseteq {\sf K}\subseteq {\sf TCA}_{\alpha}.$
Then for any terms of the language of $\TCA_{\alpha}$, $\sigma, \tau$ say,  if
${\sf K}\models \sigma\leq \tau$, then there exist  
a term $\pi$ with ${\sf Var}(\pi)\subseteq {\sf Var}(\sigma)\cap {\sf Var}(\tau)$ and a finite $\Delta\subseteq \alpha$ such that
$$\sf K\models {\sf q}_{(\Delta)}\sigma\leq \pi\leq {\sf c}_{(\Delta)}\tau.$$ In particular, for any non-zero cardinal $\beta,$
$\Fr_{\beta}{\sf K}$, has the $WIP$.
\end{theorem}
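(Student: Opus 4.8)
The plan is to prove Theorem~\ref{typeless} by a compactness argument that reduces the "typeless" interpolation of terms to the interpolation property of the dimension-restricted free algebra $\Fr_\beta^\rho \TCA_\alpha$ established in Theorem~\ref{in}. Suppose ${\sf K}\models \sigma\leq\tau$, where $\sigma,\tau$ are terms in the language of $\TCA_\alpha$ using (finitely many) variables from some index set; let $V_\sigma={\sf Var}(\sigma)$, $V_\tau={\sf Var}(\tau)$, and $V=V_\sigma\cap V_\tau$. Since ${\sf RTCA}_\alpha\subseteq{\sf K}$, we certainly have ${\sf RTCA}_\alpha\models\sigma\leq\tau$. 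The idea is to work not in $\alpha$ dimensions but in $\alpha+\omega$ dimensions, so that the (finitely many) variables of $\sigma$ outside $V$ and the variables of $\tau$ outside $V$ can be "relocated" onto fresh dimensions in $\alpha+\omega\setminus\alpha$, giving us genuine room to interpolate.

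The key steps, in order, are as follows. First, set $\beta$ to be the (finite) number of variables occurring in $\sigma$ and $\tau$ together, and choose a map $\rho:\beta\to\wp(\alpha+\omega)$ so that: (a) the variables in $V$ are assigned dimension-support contained in $\alpha$ in the usual way, while (b) each variable in $V_\sigma\setminus V$ and each in $V_\tau\setminus V$ is assigned a support that is a cofinite subset of $\alpha+\omega$ whose complement is infinite, with these supports pairwise disjoint among the non-common variables and disjoint from $\alpha$ in their "new" part; in all cases $(\alpha+\omega)\setminus\rho(i)$ is infinite, as Theorem~\ref{in} requires. Second, form $\Fr_\beta^\rho\TCA_{\alpha+\omega}$ and let $X_1$ be the set of generators corresponding to the variables of $\sigma$ and $X_2$ those of $\tau$, so that (the images of) $\sigma$ and $\tau$ lie in $\Sg X_1$ and $\Sg X_2$ respectively and still satisfy $\sigma\leq\tau$ there (because ${\sf RTCA}$ validates the inequality and $\Fr_\beta^\rho\TCA_{\alpha+\omega}\in {\sf RTCA}_{\alpha+\omega}$, or at least the relevant quotient is representable). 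Third, apply Theorem~\ref{in} to get an interpolant $\pi_0\in\Sg^{}(X_1\cap X_2)$ with $\sigma\leq\pi_0\leq\tau$ in $\Fr_\beta^\rho\TCA_{\alpha+\omega}$. Fourth — the quantifier step — $\pi_0$ is a term using only finitely many indices; let $\Delta'$ be the finite set of indices from $(\alpha+\omega)\setminus\alpha$ that appear either in $\pi_0$ or in the "relocated" part of the computation, and let $\Delta\subseteq\alpha$ be a corresponding finite block of genuine dimensions. Using the substitution operators ${\sf s}$ to pull the relocated variables back to dimensions in $\Delta$, and using that ${\sf q}_{(\Delta)}\sigma\leq{\sf q}_{(\Delta)}\pi_0$ and ${\sf c}_{(\Delta)}\pi_0\leq{\sf c}_{(\Delta)}\tau$ are consequences of monotonicity of the cylindrifiers and co-monotonicity of the interior operators, one rewrites everything as an inequality ${\sf q}_{(\Delta)}\sigma\leq\pi\leq{\sf c}_{(\Delta)}\tau$ holding in ${\sf RTCA}_\alpha$, where $\pi$ is obtained from $\pi_0$ by the substitutions and has ${\sf Var}(\pi)\subseteq V$. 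Fifth, since this is now a universally quantified equational consequence true in ${\sf RTCA}_\alpha$ and ${\sf K}\subseteq{\sf TCA}_\alpha$ — wait, here one must be careful: the inequality ${\sf q}_{(\Delta)}\sigma\leq\pi\leq{\sf c}_{(\Delta)}\tau$ is a quasi-identity-free identity pair, and it is a logical consequence of $\sigma\leq\tau$ over the theory of $\TCA_\alpha$ once we have verified it in representable algebras and noted that both bounds are term-definable; the statement ${\sf K}\models\cdots$ then follows because ${\sf K}\models\sigma\leq\tau$ and the passage from $\sigma\leq\tau$ to the interpolated inequality uses only $\TCA_\alpha$-valid manipulations plus the existence of $\pi$, which we already produced. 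Finally, the "in particular" clause follows by instantiating the generic terms at the free generators of $\Fr_\beta{\sf K}$: given $x\in\Sg^{\Fr_\beta{\sf K}}X_1$, $z\in\Sg^{\Fr_\beta{\sf K}}X_2$ with $x\leq z$, write $x=\sigma^{\Fr}(\bar g)$, $z=\tau^{\Fr}(\bar g)$ for suitable terms, apply the term statement, and read off $y=\pi^{\Fr}(\bar g)\in\Sg^{\Fr_\beta{\sf K}}(X_1\cap X_2)$ with ${\sf q}_{(\Gamma)}x\leq y\leq{\sf c}_{(\Gamma)}z$, which is exactly $WIP$.

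The main obstacle I expect is the quantifier/relocation step (step four), i.e., making precise how one transfers an interpolant that "lives" in $\alpha+\omega$ dimensions back down to $\alpha$ dimensions while controlling exactly which indices get quantified. One has to verify that the fresh dimensions used by $\rho$ and by $\pi_0$ are finite in number, that the substitution operators ${\sf s}^{i}_{j}$ moving them into $\alpha$ behave correctly with the cylindrifiers and interior operators (in particular that ${\sf s}$ commutes appropriately with ${\sf q}_i$ on the relevant supports, so that ${\sf Var}(\pi)$ really lands inside $V$), and that after this rewriting the two outer operations are genuinely ${\sf q}_{(\Delta)}$ on the left and ${\sf c}_{(\Delta)}$ on the right with the \emph{same} finite $\Delta$. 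This is the place where infinite dimension is essential — one needs infinitely many spare dimensions to park the non-shared variables — and it is precisely why, as the surrounding text notes, the finite-dimensional analogue fails. The remaining steps are routine: the representability input is exactly Theorem~\ref{in}, monotonicity of ${\sf c}_i$ and ${\sf q}_i$ is part of the axioms of $\TCA_\alpha$, and the reduction of the term statement to $WIP$ for $\Fr_\beta{\sf K}$ is just substitution of generators, using that $\Sg^{\Fr_\beta{\sf K}}(X_1\cap X_2)$ is generated by the common generators.
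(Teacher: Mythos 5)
Your overall strategy --- interpolate in $\omega$ extra dimensions using Theorem \ref{in} and then pull the interpolant back down to $\alpha$ --- is the same as the paper's, and your treatment of the ``in particular'' clause (instantiating at free generators) is fine. But two things go wrong, one cosmetic and one fatal. The cosmetic one: your choice of $\rho$ gives the non-shared generators supports reaching outside $\alpha$ (and ``cofinite \ldots whose complement is infinite'' is self-contradictory). This serves no purpose and obscures how the hypothesis transfers: the correct choice is $\rho(i)=\alpha$ for \emph{every} generator, so that an assignment respecting $\rho$ lands in $\Nr_{\alpha}\B$ for $\B\in\TCA_{\alpha+\omega}$, and ${\sf K}\models\sigma\leq\tau$ then yields $\sigma\leq\tau$ under all such assignments because $\Nr_{\alpha}\TCA_{\alpha+\omega}\subseteq{\sf RTCA}_{\alpha}\subseteq{\sf K}$. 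The room for interpolation comes from the extra dimensions available to the interpolating \emph{term}, not from enlarging the supports of the generators.

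The fatal gap is your step five. The conclusion must hold in \emph{every} member of $\sf K$, and $\sf K$ may be all of $\TCA_{\alpha}$, so it must hold in non-representable algebras; verifying the interpolated inequality in ${\sf RTCA}_{\alpha}$ is not enough, and your claim that it becomes ``a logical consequence over the theory of $\TCA_{\alpha}$ once we have verified it in representable algebras'' is exactly the question being begged --- that is the gap between ${\sf RTCA}_{\alpha}$ and $\TCA_{\alpha}$. The missing device is a compactness argument: the validity of $\sigma\leq\pi_{0}\leq\tau$ in all of $\TCA_{\alpha+\omega}$ under assignments into $\Nr_{\alpha}$ is an entailment from the $\TCA_{\alpha+\omega}$ axioms together with the \emph{infinitely many} hypotheses ${\sf c}_{\mu}x_{v}=x_{v}$ ($\alpha\leq\mu<\alpha+\omega$); by compactness only finitely many of these are used, so the entailment already holds over $\TCA_{\alpha+\delta}$ for some finite $\delta$. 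Only then can one take an arbitrary $\C\in\TCA_{\alpha}$, form $\Rd^{(\rho)}\C\in\TCA_{\alpha+\delta}$ along a bijection $\rho:\alpha+\delta\to\alpha$ (this is where $\alpha\geq\omega$ enters), note that ${\sf s}^{\mu_{0}}_{v_{0}}\cdots{\sf s}^{\mu_{\delta-1}}_{v_{\delta-1}}x\in\Nr_{\alpha}\Rd^{(\rho)}\C$ for every $x$ --- a \emph{finite} composition of substitutions, hence a term, only because $\delta$ is finite --- and finally use ${\sf q}_{i}x\leq{\sf s}^{i}_{j}x\leq{\sf c}_{i}x$ to turn those substitutions into ${\sf q}_{(\Delta)}$ on the left and ${\sf c}_{(\Delta)}$ on the right. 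Observing that the term $\pi_{0}$ mentions only finitely many indices of $(\alpha+\omega)\sim\alpha$, which is all your step four records, does not supply this: the obstruction is not the indices occurring in $\pi_{0}$ but the fact that not every $\TCA_{\alpha+\delta}$-algebra neatly embeds into a $\TCA_{\alpha+\omega}$-algebra, so validity over the latter class does not transfer to the former without the proof-theoretic finiteness that compactness provides.
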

\begin{proof} The same argument in \cite{typeless} but now using theorem \ref{in}.
\end{proof}
We will show that the above form of interpolation is the best possible for such classes $\sf K$, witness theorem \ref{infinitevarieties}.
Furthermore it fails for finite dimensions, theorem \ref{notequivalent}.

We have proved that for $\alpha\geq \omega$, $\sf TDc_{\alpha}$ lies in the super amalgamation base of $\sf TRCA_{\alpha}$.
In what follows we define larger classes of algebras, still retaining an amalgamation property.
We lose the {\it strong} amalgamation property, but in return in such cases the amalgam is always found 
inside the class in question. 

The following class was introduced by Monk for cylindric algebras 
under the name of {\it Diagonal cylindric algebras}, and was denoted by ${\sf Di}_{\alpha}$ in 
\cite{P}. In the last reference Pigozzi proved that this class has $AP$. 
We obtain an anlogous result, but we define the class differently, 
allowing generalization to diagonal free
algebras. We use the term definable substitutions corresponding to replacements. 
In what follows $\alpha$, unless indicated otherwise,  is infinite.
\begin{definition}
$\A\in \TCA_{\alpha}$
is called a {\it substitution algebra of dimension $\alpha$}, if for all non-zero $x$ in $A$,
for all finite $\Gamma\subseteq \alpha$, there exist distinct
$i,j\in \alpha\sim \Gamma$, such that ${\sf s}_i^jx\neq 0$.
Let $\sf TSc_{\alpha}$ denote the class of
substitution algebras.
\end{definition}


We know that any simple algebra is in $\Sc_{\alpha}$ and semi-simple algebras are subdirect products of
simple ones. This does not guarantee that the class of semi-simple algebras is contained in $\Sc_{\alpha}$
because we cannot assume a priori that the latter class  is closed under products.
However, as it happens, we have:

\begin{theorem} If $\A$ is semi-simple or $\A\in \TDc_{\alpha}$, then $\A\in \Sc_{\alpha}.$
\end{theorem}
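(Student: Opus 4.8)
The plan is to verify the defining condition of $\Sc_{\alpha}$ directly in each of the two cases. Recall that we must show: for every nonzero $x\in A$ and every finite $\Gamma\subseteq\alpha$, there are distinct $i,j\in\alpha\sim\Gamma$ with $\sub{i}{j}x\neq 0$. Note first that $\alpha\sim\Gamma$ is infinite since $\alpha$ is infinite and $\Gamma$ is finite, so there is plenty of room to choose such indices.

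First I would handle the dimension-complemented case, $\A\in\TDc_{\alpha}$. Fix nonzero $x$ and finite $\Gamma$. Since $\A$ is dimension complemented, $\alpha\sim\Delta x$ is infinite, so $(\alpha\sim\Delta x)\sim\Gamma$ is nonempty; pick $j$ in this set, so $j\notin\Delta x$ and $j\notin\Gamma$. Because $j\notin\Delta x$ we have $\cyl{j}x=x$; in particular $x$ is ``independent of coordinate $j$''. Now pick any $i\in\alpha\sim(\Gamma\cup\set{j})$, which is again possible as the excluded set is finite. The point is that for $j\notin\Delta x$, the substitution $\sub{i}{j}$ acts on $x$ essentially trivially: $\sub{i}{j}x = \cyl{j}(\diag{i}{j}\cdot x) $ and since $j\notin\Delta x$ this equals $\cyl{j}\diag{i}{j}\cdot x\geq \sub{i}{j}(\cyl{j}x)=\sub{i}{j}x$, and more usefully $\sub{i}{j}x\geq \diag{i}{j}\cdot x$ pushed through, giving in fact $\sub{i}{j}x = x$ when additionally $i\notin\Delta x$ — so by choosing $i\notin\Delta x$ as well (still possible, finitely many constraints on an infinite set) we get $\sub{i}{j}x=x\neq 0$. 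This settles the $\TDc$ case; the only care needed is the standard identity relating $\sub{i}{j}$, $\cyl{j}$ and $\diag{i}{j}$ when $j\notin\Delta x$, which is routine from the $\CA$ axioms.

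For the semisimple case, write $\A$ as a subdirect product $\A\subseteq\prod_{t\in T}\A_t$ with each $\A_t$ simple, the $t$-th projection being surjective. Fix nonzero $x\in A$ and finite $\Gamma$. Choose $t$ with $x_t\neq 0$ in $\A_t$. Since $\A_t$ is simple, it is in $\Sc_{\alpha}$ (as noted in the paragraph preceding the theorem), so there exist distinct $i,j\in\alpha\sim\Gamma$ with $\sub{i}{j}^{\A_t}x_t\neq 0$. Since substitution is computed coordinatewise in a subdirect product, $(\sub{i}{j}^{\A}x)_t=\sub{i}{j}^{\A_t}x_t\neq 0$, hence $\sub{i}{j}^{\A}x\neq 0$, as required. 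The mild subtlety here — and the main obstacle — is that a single choice of $(i,j)$ must work for \emph{all} the relevant information simultaneously; but since we only need \emph{some} nonzero value and we get to pick the coordinate $t$ first, one coordinate suffices and no uniformity over $T$ is needed. (If instead one wanted a single $(i,j)$ good for a prescribed witness in every coordinate, that would genuinely fail, which is precisely why the class of semisimple algebras is not obviously closed under products; but the definition of $\Sc_{\alpha}$ is existential in $(i,j)$ per element, so the coordinatewise argument goes through.) Thus in both cases the $\Sc_{\alpha}$ condition holds and $\A\in\Sc_{\alpha}$.
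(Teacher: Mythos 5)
Your proof is correct and follows essentially the same route as the paper: for $\TDc_{\alpha}$ you choose $i,j$ outside $\Gamma\cup\Delta x$ so that ${\sf s}_i^jx=x\neq 0$, and for the semisimple case you pass to a simple factor in which $x$ survives and invoke the fact (stated just before the theorem) that simple algebras lie in ${\sf Sc}_{\alpha}$. The only cosmetic difference is that the paper realizes the simple factor as $\A/F$ for a maximal filter $F$ with $x/F\neq 0$ and reproves the simple case inline via ${\sf c}_{(\Delta)}x/F=1$, which is exactly your coordinate $\A_t$ in the subdirect decomposition.
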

\begin{proof} Let $\A\in \TDc_{\alpha}$ $a\in \A$ be non zero, and $\Gamma\subseteq_{\omega} \alpha$.
Choose $i,j\in \alpha\sim \Delta x$. Then ${\sf s}_{i}^jx=x\neq 0$.
Let $\A\in \Sc_{\alpha}$, $\Gamma$ be a finite subset of $\alpha$ and $x\in \A\sim \{0\}$.
Using Zorn's lemma one can find  a maximal filter $F$ of $\A$ such that $x\notin F$.
Since $F$ is maximal then $\A/F$ is simple. But $x\notin F$, hence  there exists a finite $\Delta\subseteq \alpha$,
such that ${\sf c}_{(\Delta)}(x/F)={\sf c}_{(\Delta)} x/F=1$.

Let $i,j\in \alpha\sim (\Gamma\cup \Delta)$, then we claim that
${\sf s}_i^jx\neq 0$. If not, then
$$0=({\sf c}_{(\Delta)}{\sf s}_i^jx)/F={(\sf s}_i^j{\sf c}_{(\Delta)}x)/F =
{\sf c}_{(\Delta)}x/F= 1,$$  which is impossible.
\end{proof}
\begin{theorem}\label{sc} $\Sc_{\alpha}\subseteq \sf RTCA_{\alpha}$. Furthermore, 
the class $\Sc_{\alpha}$ has $AP$  and $TIP$.
If the algebras to be amalgamated are semi-simple 
then the amalgam can be chosen to be semisimple, too. The same holds when we replace
semi-simple by simple.
\end{theorem}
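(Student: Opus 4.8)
The plan is to establish each assertion in turn, using the already-proved machinery of filters/congruences (Theorem \ref{lattice}), the filter calculus (the numbered Lemma preceding Theorem \ref{tip}), and the translation of these algebraic statements back and forth with representability. First I would deal with $\Sc_{\alpha}\subseteq \sf RTCA_{\alpha}$. By the previous theorem every simple algebra lies in $\Sc_{\alpha}$, and conversely the definition of a substitution algebra — that for every non-zero $x$ and every finite $\Gamma$ there are distinct $i,j\in\alpha\setminus\Gamma$ with ${\sf s}_i^j x\neq 0$ — is precisely the condition that lets one build a representation by the usual step-by-step/ultrafilter argument: take any non-zero $a$, a maximal filter $F$ with $a\in F$, pass to the simple quotient $\A/F$, represent it (simple $\TCA_{\alpha}$'s are representable, since the Boolean-plus-cylindric part is a simple $\CA_{\alpha}$ which is representable, and the interior operators relativize harmlessly to the base), and glue. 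The substitution condition is exactly what guarantees enough room to realize all the needed witnesses when $\alpha$ is infinite. I would cite the analogous $\CA$ argument and note that the topological operators add no obstruction since ${\sf q}_i$ is a sub-identity interior operator that descends to quotients.

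Next, for $AP$ of $\Sc_{\alpha}$: given $\C\in\Sc_{\alpha}$ with embeddings $f:\C\to\A$, $g:\C\to\B$, I would mimic the construction in the proof of Theorem \ref{tip} / Theorem \ref{SUPAP}. Identify $\C$ with its images; for each pair $x\neq y$ in $\A$ (and symmetrically in $\B$) find a maximal filter of the amalgamated structure separating them — concretely, using item (v) of the filter Lemma, extend a suitable filter of $\C$ to a maximal filter $N$ of $\A$ (resp.\ $\B$) whose trace on $\C$ is the same maximal filter, form the simple quotients, which by the first part are representable, and then glue all these simple representable pieces into a direct product. Because each factor is simple and representable, the product is a subdirect product of simple algebras, hence semisimple, and lies in $\sf RTCA_{\alpha}\subseteq\TCA_{\alpha}$; and since simple algebras are in $\Sc_{\alpha}$, if additionally $\A,\B$ are (semi)simple one arranges the factors to be the images under the separating maps, so the amalgam is itself (semi)simple. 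That one runs these separations over \emph{all} distinguishing pairs and takes a product is exactly the device that upgrades homomorphisms-into-factors to an embedding, as in Theorem \ref{tip}. Finally $TIP$ follows from $AP$ by Theorem \ref{tip}, since $\Sc_{\alpha}$ is closed under $\bold H$ (quotients of substitution algebras by filters are substitution algebras — the condition survives because ${\sf s}_i^j$ commutes with the quotient map and $x/F\neq 0$ means $x\notin F$, and the same $i,j$ work) and under $\bold P$ it is \emph{not} obviously closed, but $TIP$ only needs $\bold H\sf K=\sf K$ for the $AP\Rightarrow TIP$ direction, which is what we want here.

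The main obstacle I anticipate is the compatibility of the \emph{substitution condition} with the amalgamation construction: one must check that the direct-product amalgam actually lands back in $\Sc_{\alpha}$, i.e.\ that it again satisfies ``for every non-zero element and every finite $\Gamma$ some off-$\Gamma$ substitution is non-zero.'' A non-zero element of the product is non-zero in some coordinate, which is a simple algebra; simple algebras are in $\Sc_{\alpha}$ by the previous theorem; so the coordinate supplies the required $i,j$ — but one has to be slightly careful that these $i,j$ can be chosen uniformly, or else only claim the amalgam is semisimple (a subdirect product of simple algebras) rather than literally in $\Sc_{\alpha}$, which is all the statement asks for in the general case. The delicate point in the ``semisimple in, semisimple out'' and ``simple in, simple out'' refinements is to choose the separating quotients so that the canonical maps from $\A$ and $\B$ into the relevant factors remain \emph{injective}, which is where maximality of the extended filters (item (v) of the filter Lemma) and the hypothesis that $f,g$ are embeddings are used; I would organize this exactly as in the proof of Theorem \ref{tip}, replacing ``$AP$'' there by the separation-and-product construction spelled out above.
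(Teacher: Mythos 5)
Your plan founders on a point the paper itself makes explicit: $\Sc_{\alpha}$ is \emph{strictly larger} than the class of semisimple algebras. Example \ref{strict}(1) exhibits $\wp({}^{\alpha}m)\in \sf TSc_{\alpha}$ (for finite $m\geq 2$) in which the singleton of a constant map lies in \emph{every} maximal proper ideal. For such an algebra there is no family of maximal congruences separating points, so your central device --- ``for each nonzero $a$ (resp.\ each pair $x\neq y$) pick a maximal filter separating it, pass to the simple quotient, and take the product'' --- is simply unavailable, both for the inclusion $\Sc_{\alpha}\subseteq \sf RTCA_{\alpha}$ and for $AP$ on all of $\Sc_{\alpha}$. (A smaller symptom of the same problem: for most nonzero $a$ there is no proper congruence-filter containing $a$ at all, since the filter generated by $a$ contains ${\sf q}_{(\Gamma)}a$, which is typically $0$.) The substitution condition is not a surrogate for semisimplicity; its actual role, as in \cite[Theorem 2.6.50(iii)]{HMT1} and in theorem \ref{wsc} for the larger class $\sf TWSc_{\alpha}$, is to furnish a neat embedding into $\omega$ extra dimensions, and that is how both assertions are obtained: one dilates, amalgamates the dilations using theorem \ref{s} (the $SUPAP$ flowing from the interpolation theorem \ref{in}), and comes back down. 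Your claim that $\bold H\Sc_{\alpha}=\Sc_{\alpha}$ ``because the same $i,j$ work'' is also unjustified: ${\sf s}_i^jx\neq 0$ does not prevent ${\sf s}_i^jx$ from being annihilated in the quotient.

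Second, even on the semisimple fragment where the reduction to simple quotients is legitimate, you never actually amalgamate the two simple quotients over their common subalgebra: you pass directly from ``each piece is representable'' to ``glue the pieces into a direct product.'' Representability of the pieces buys you nothing here --- theorem \ref{am} shows $\sf RTCA_{\alpha}$ itself fails $AP$ --- and a direct product of the separate quotients of $\A$ and of $\B$ carries no compatible pair of embeddings over the base. The genuine content is that \emph{simple} algebras of infinite dimension can be amalgamated, and that is exactly where theorem \ref{s} (equivalently, the $WIP$ of the free algebras, cf.\ theorem \ref{weakIP}) must be invoked; your proposal never appeals to theorem \ref{s}, theorem \ref{in}, or any interpolation statement, so the engine of the proof is missing. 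The product-over-all-separating-pairs device you borrow from theorem \ref{tip} is the right finishing move, but only after each pair of quotients has been amalgamated inside a single algebra $\D_{xy}$.
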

\begin{proof} The same argument in \cite[Theorem 2.2.24]{P} using theorem \ref{s}.
\end{proof}
\begin{theorem}\label{ultra} ${\bf SUp}\sf TSc_{\alpha}=\sf RTCA_{\alpha}.$
\end{theorem}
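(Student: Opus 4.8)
The plan is to prove the two inclusions of the equality separately; the inclusion ${\bf SUp}\,\sf TSc_{\alpha}\subseteq\sf RTCA_{\alpha}$ is immediate, while $\sf RTCA_{\alpha}\subseteq{\bf SUp}\,\sf TSc_{\alpha}$ carries all the weight. For the easy direction, Theorem~\ref{sc} gives $\sf TSc_{\alpha}\subseteq\sf RTCA_{\alpha}$, and since $\sf RTCA_{\alpha}$ is a variety (as already used in Theorem~\ref{apt}) it is closed under ${\bf S}$ and ${\bf Up}$; hence ${\bf SUp}\,\sf TSc_{\alpha}\subseteq{\bf SUp}\,\sf RTCA_{\alpha}=\sf RTCA_{\alpha}$.

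For the reverse inclusion I would first reduce to a single set algebra. The class $\sf TSc_{\alpha}$ is itself closed under ${\bf S}$ and ${\bf P}$: the defining clause only requires that, given a non-zero $x$ and a finite $\Gamma\subseteq\alpha$, \emph{some} substitution ${\sf s}_i^jx$ with distinct $i,j\in\alpha\sim\Gamma$ be non-zero, which is inherited by subalgebras (the ${\sf s}_i^j$ are term-definable in $\sf TCA_{\alpha}$) and is witnessed coordinate-wise in a single factor of a direct product. Combining this with the standard facts ${\bf Up}\,{\bf S}\subseteq{\bf S}\,{\bf Up}$, ${\bf Up}\,{\bf Up}\subseteq{\bf S}\,{\bf Up}$, and the classical embedding of a direct product into an ultraproduct of its finite subproducts, one obtains that ${\bf SUp}\,\sf TSc_{\alpha}$ is closed under ${\bf S}$ and under arbitrary ${\bf P}$, so that ${\bf S}{\bf P}\bigl({\bf SUp}\,\sf TSc_{\alpha}\bigr)={\bf SUp}\,\sf TSc_{\alpha}$. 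Since $\sf RTCA_{\alpha}={\bf S}{\bf P}\{\wp({}^{\alpha}U):U\text{ a topological space}\}$, it now suffices to show that each full topological cylindric set algebra $\wp({}^{\alpha}U)$ belongs to ${\bf SUp}\,\sf TSc_{\alpha}$.

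For this last step I would run the corresponding cylindric-algebra argument (in the spirit of \cite{P}, \cite{HMT1}, and the compactness/ultraproduct technique of \cite{typeless} that already underlies Theorems~\ref{in} and~\ref{typeless}); the interior operators are, by the usual reduction to the $\sf CA$ case recalled at the start of this section, transported along the representation and play no essential role, so one argues as for plain cylindric set algebras. One decomposes $\wp({}^{\alpha}U)$ as a product of weak set algebras; for each weak set algebra $\B$ one constructs, indexed by the finite subsets $\Gamma$ of $\alpha$, homomorphisms of $\B$ into members of $\sf TSc_{\alpha}$ — one may take set algebras whose base tuple is constant off $\Gamma$, and these do lie in $\sf TSc_{\alpha}$ because any member of a non-zero element agrees off a finite set with a constant tuple, so two of its coordinates outside any prescribed finite set coincide and the associated substitution of that element is non-zero — and, taking an ultrafilter on the index set that contains every cone $\{\Gamma:\Gamma_0\subseteq\Gamma\}$, one arranges that for each fixed pair $i,j$ the failure of the substitution condition in the approximants is washed out on a set in the ultrafilter. {\L}o\'s's theorem then delivers an embedding of $\wp({}^{\alpha}U)$ into the resulting ultraproduct of members of $\sf TSc_{\alpha}$.

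The hard part is this last paragraph, and in particular making genuine (rather than cosmetic) use of ${\bf Up}$ in place of ${\bf P}$: a full set algebra over an infinite base contains elements of ``unbounded variation'' — for instance the singleton of an injective $\alpha$-tuple — which embed into no direct product of substitution algebras, so that only an ultraproduct, with the approximating family and the ultrafilter chosen with care, can accommodate them; arranging this is precisely the content of the cylindric fact invoked above, and is where all the work concentrates. By contrast the closure bookkeeping above and the transfer of the topology are routine.
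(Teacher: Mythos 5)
Your easy inclusion and the closure bookkeeping are fine: $\sf TSc_{\alpha}\subseteq\sf RTCA_{\alpha}$ is Theorem \ref{sc}, $\sf RTCA_{\alpha}$ is a variety, and $\sf TSc_{\alpha}$ is indeed closed under $\bf S$ and $\bf P$ (a non-zero product element has a non-zero coordinate, and the existential witness there lifts), so reducing to full set algebras is legitimate. But the entire content of the theorem is the step you do not carry out, namely $\sf RTCA_{\alpha}\subseteq{\bf SUp}\,\sf TSc_{\alpha}$, and your sketch of it does not go through as described. You never define the homomorphisms from a weak set algebra with unit ${}^{\alpha}U^{(p)}$ into the ``constant off $\Gamma$'' approximants. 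The natural candidates are pullbacks along the truncation $r_{\Gamma}$ (replace a sequence by the one agreeing with it on $\Gamma$ and constant elsewhere); these do preserve each fixed operation on a cone of $\Gamma$'s, but $r_{\Gamma}^{-1}[X]$ is empty unless $X$ meets the range of $r_{\Gamma}$, so for $X=\{\Theta\}$ with $\Theta$ injective the image is $0$ for \emph{every} finite $\Gamma$, and the induced map into the ultraproduct kills $X$. In other words the family fails eventual injectivity on exactly the elements you yourself identify as the obstruction; writing that ``arranging this is precisely the content of the cylindric fact invoked above, and is where all the work concentrates'' names the gap rather than closing it.

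The paper does not attempt this construction at all: it sandwiches through $\sf TDc_{\alpha}$. Two results before Theorem \ref{sc} it is shown that $\sf TDc_{\alpha}\subseteq\sf TSc_{\alpha}$ (choose distinct $i,j$ outside $\Gamma\cup\Delta x$; then ${\sf s}_i^jx=x\neq 0$), Theorem \ref{sc} gives $\sf TSc_{\alpha}\subseteq\sf RTCA_{\alpha}$, and the hard inclusion is inherited from the already available generation result $\sf RTCA_{\alpha}={\bf SUp}\,\sf TDc_{\alpha}$ (the topological form of the classical cylindric fact from \cite{HMT1, P}), yielding $\sf RTCA_{\alpha}\subseteq{\bf SUp}\,\sf TDc_{\alpha}\subseteq{\bf SUp}\,\sf TSc_{\alpha}\subseteq\sf RTCA_{\alpha}$. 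So either route your argument the same way, citing the $\sf TDc_{\alpha}$ generation theorem and the inclusion $\sf TDc_{\alpha}\subseteq\sf TSc_{\alpha}$, or actually exhibit an eventually injective system of homomorphisms into members of $\sf TSc_{\alpha}$; as it stands the decisive step is missing, and the one concrete construction you hint at breaks on the very examples that force the use of ultraproducts in the first place.
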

\begin{proof}
From theorem \ref{sc}  since $\sf TDc_{\alpha}\subseteq \Sc_{\alpha}.$
\end{proof}

The following class is the $\TCA$ analogue of the class of cylindric algebras introduced 
in item (iii) of theorem 2.6.50. Using the neat embedding theorem \cite[Theorem 4.2(2)]{part1} yet again, together with ultraproducts, 
we show that such a class consists only
of representable $\TCA_{\alpha}$s, furthermore, it is easy to see that $\sf TSc_{\alpha}$ 
is contained in it, and as we shall see 
in a minute properly.

\begin{definition} $\A\in \TCA_{\alpha}$ is called a {\it weak substitution algebra},
a $\sf TWSc_{\alpha}$ for short, if for every finite injective map $\rho$ into $\alpha$,
and for every  $x\in A$, $x\neq 0$, there is a function $h$ and $k<\alpha$ such
that $h$ is an endomorphism of
$\Rd^{\rho}\A$, $k\in \alpha\sim \rng(\rho)$, ${\sf c}_k\circ h=h$ and $h(x)\neq 0$.
\end{definition}
The following theorem can be proved exactly like in \cite[Theorem 2.6. 50(iii)]{HMT1} using the hitherto established 
neat embedding theorem \cite[Theorem 4.2 (2)]{part1}.

\begin{theorem}\label{wsc}
For any infinite ordinal $\alpha$, $\sf TWSc_{\alpha}\subseteq \sf TRCA_{\alpha}$.
\end{theorem}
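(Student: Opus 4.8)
The plan is to show that $\A\in\SNr_{\alpha}\TCA_{\alpha+\omega}$ and then to quote the neat embedding theorem \cite[Theorem 4.2(2)]{part1}, which for infinite $\alpha$ identifies $\SNr_{\alpha}\TCA_{\alpha+\omega}$ with $\sf TRCA_{\alpha}$; this yields $\A\in\sf TRCA_{\alpha}$. The argument runs parallel to the proof of \cite[Theorem 2.6.50(iii)]{HMT1}, the single new ingredient being the $\TCA$ neat embedding theorem established in part~1.

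First I would reduce matters to separating one non-zero element at a time. Since $\Nr_{\alpha}$ commutes with direct products, $\prod_{i}\Nr_{\alpha}\D_{i}=\Nr_{\alpha}\prod_{i}\D_{i}$, so it suffices to prove that for every non-zero $x\in A$ there exist $\D_{x}\in\TCA_{\alpha+\omega}$ and a homomorphism $h_{x}:\A\to\Nr_{\alpha}\D_{x}$ with $h_{x}(x)\neq 0$. Indeed, $\prod_{x}h_{x}$ is then an embedding of $\A$ into $\Nr_{\alpha}\prod_{x}\D_{x}$, whence $\A\in\SNr_{\alpha}\TCA_{\alpha+\omega}$ and we are done.

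Fix a non-zero $x$. I would construct $\D_{x}$ by an $\omega$-step dilation driven by the weak substitution hypothesis, copying the dilation of \cite[Theorem 2.6.50(iii)]{HMT1}. Reserve indices $k_{0},k_{1},\dots\in\alpha$ to be relabelled afterwards as the new dimensions $\alpha,\alpha+1,\dots$ At stage $n$ one has a (reduct of a) $\TCA$ together with a non-zero image $x_{n}$ of $x$, where $x_{0}=x$; picking a finite injective map $\rho_{n}$ into $\alpha$ whose range contains $\{k_{0},\dots,k_{n-1}\}$ (so that the next index produced will be fresh), the $\sf TWSc_{\alpha}$ condition supplies an endomorphism $h_{n}$ of $\Rd^{\rho_{n}}$ of the current algebra, together with $k_{n}\in\alpha\sim\rng(\rho_{n})$, such that ${\sf c}_{k_{n}}\circ h_{n}=h_{n}$ and $h_{n}(x_{n})\neq 0$; put $x_{n+1}=h_{n}(x_{n})$. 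Composing the $h_{n}$ and passing to the direct limit of the resulting chain (equivalently, to a suitable ultraproduct, exactly as in \cite{HMT1}) produces a homomorphic image $\mathfrak{I}$ of $\A$ in which the image of $x$ is non-zero and fixed by every ${\sf c}_{k_{n}}$, i.e.\ is ``free'' of all the dimensions $k_{n}$. Re-indexing the $k_{n}$ as $\alpha,\alpha+1,\dots$ then exhibits $\mathfrak{I}$ as a subalgebra of $\Nr_{\alpha}\D_{x}$ for a suitable $\D_{x}\in\TCA_{\alpha+\omega}$, which is precisely what the previous paragraph requires.

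The point at which the topological setting intervenes is that, on top of the Boolean and cylindric structure, the interior operators $I_{i}$ (equivalently their duals ${\sf q}_{i}$, $i<\alpha$) must be transported through the construction. One checks that each $h_{n}$, being an endomorphism of a $\TCA$-reduct, commutes with the $I_{i}$ present in its signature, and that the limit structure still satisfies the $\TCA$ axioms linking the $I_{i}$ to the cylindrifiers and substitutions; as these are all equations (or quasi-equations), they are preserved under homomorphic images, subalgebras and direct limits, so this is routine. The hard part will be the bookkeeping inside the dilation step: ensuring stage by stage that $\rho_{n}$ can genuinely be chosen to cover the already-reserved indices, that the image of $x$ stays non-zero throughout, and that the chain of reducts really does assemble into a member of $\TCA_{\alpha+\omega}$ whose $\alpha$-neat reduct receives $\A$. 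This is exactly the delicate core of the $\CA$ argument in \cite{HMT1}, and the clause ``for every finite injective map $\rho$'' in the definition of $\sf TWSc_{\alpha}$ is what makes it work.
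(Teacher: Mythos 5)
Your proposal is correct and follows essentially the same route the paper takes: the paper gives no written proof but states that the result is obtained exactly as in \cite[Theorem 2.6.50(iii)]{HMT1} using the neat embedding theorem of part 1 together with ultraproducts, which is precisely the reduction to $\SNr_{\alpha}\TCA_{\alpha+\omega}$ via point-separating dilations that you outline. Your additional remarks on transporting the interior operators through the dilation are exactly the (routine) adaptation the paper leaves implicit.
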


\begin{theorem}\label{twsc}

The following conditions are equivalent:

\begin{enumarab}
\item $\sf TWSc_{\alpha}$ is elementary.
\item $\sf TWSc_{\alpha}$ is closed under ultraproducts.
\item $\sf TWSc_{\alpha}=\sf TRCA_{\alpha}.$
\item $\sf TWSc_{\alpha}$ is a variety.
\end{enumarab}
\end{theorem}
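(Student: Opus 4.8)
The theorem asserts the equivalence of four conditions about $\sf TWSc_{\alpha}$ for infinite $\alpha$: being elementary, being closed under ultraproducts, coinciding with $\sf TRCA_{\alpha}$, and being a variety. I would prove this as a cycle of implications: $(3)\Rightarrow(4)\Rightarrow(1)\Rightarrow(2)\Rightarrow(3)$. Three of these four arrows are essentially soft model theory. Indeed $(3)\Rightarrow(4)$ is immediate since $\sf TRCA_{\alpha}$ is a variety (it is defined as the class of representable algebras, which by the usual algebraic logic machinery is equationally axiomatizable, or one simply recalls it is closed under $\bold H$, $\bold S$, $\bold P$); $(4)\Rightarrow(1)$ holds because every variety is elementary (even universally axiomatizable by equations); and $(1)\Rightarrow(2)$ is the trivial direction of the {\L}o\'s--Tarski/Keisler--Shelah circle of ideas, since any elementary class is closed under ultraproducts. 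The whole weight of the theorem therefore sits in the single implication $(2)\Rightarrow(3)$.

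\textbf{The key step.} To prove $(2)\Rightarrow(3)$, assume $\sf TWSc_{\alpha}$ is closed under ultraproducts. We already know from Theorem \ref{wsc} that $\sf TWSc_{\alpha}\subseteq \sf TRCA_{\alpha}$, so only the reverse inclusion needs work. The strategy is the standard one used for the $\CA$ analogue in \cite[Theorem 2.6.50]{HMT1}: take an arbitrary $\A\in \sf TRCA_{\alpha}$ and build an ultrapower, or an ultraproduct of suitable ``$k$-dilations'', that lands in $\sf TWSc_{\alpha}$, then pull membership back to $\A$ using that $\A$ embeds into this ultraproduct and that $\sf TWSc_{\alpha}$, being (by hypothesis) closed under ultraproducts and always closed under subalgebras, would have to contain $\A$. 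Concretely: since $\A$ is representable, for each non-zero $x$ and each finite injective $\rho$ we can, in a representation, find the required endomorphism $h$ of $\Rd^{\rho}\A$ and the spare index $k$ witnessing $\sf TWSc_{\alpha}$ \emph{in some dilation}; assembling these witnesses along an ultrafilter on the index set of finite subsets of $\alpha$ (ordered by inclusion, using that $\alpha$ is infinite so this is a proper directed set) produces, via {\L}o\'s's theorem, a single algebra in which all the $\sf TWSc_{\alpha}$-requirements are met simultaneously, and which contains an isomorphic copy of $\A$ as the ``diagonal.'' By the ultraproduct-closure hypothesis this algebra is in $\sf TWSc_{\alpha}$; by closure under subalgebras, $\A\in \sf TWSc_{\alpha}$.

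\textbf{The main obstacle.} The delicate point is organizing the ultraproduct so that the witnessing endomorphisms $h$ and the spare indices $k$ for \emph{all} pairs $(x,\rho)$ get glued into genuine endomorphisms of reducts of the ultraproduct, and so that the spare index $k$ for a given requirement survives {\L}o\'s's theorem as an index genuinely outside $\rng(\rho)$. This is exactly where the infinitude of $\alpha$ is used: there is always ``room'' for fresh indices, and finite pieces of data can be spread out over the index set. One must be careful that the map $h$ witnessing the weak-substitution condition in each coordinate is a reduct-endomorphism commuting with ${\sf c}_k$, and that these properties are first-order expressible relative to the coordinate algebras so that they transfer. I expect this bookkeeping — essentially a faithful transcription of the argument of \cite[Theorem 2.6.50(iii)]{HMT1} with the interior operators ${\sf q}_i$, $I_i$ carried along harmlessly since they commute with cylindrifications and substitutions in the sense already used in the proof of Theorem \ref{lattice} — to be the only real content; the three ``soft'' implications require no more than a sentence each.
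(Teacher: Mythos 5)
Your cycle $(3)\Rightarrow(4)\Rightarrow(1)\Rightarrow(2)\Rightarrow(3)$ and the three soft implications coincide with the paper's, and you correctly locate all the weight in $(2)\Rightarrow(3)$. Where you diverge is in how that step is discharged. The paper's proof is one line: under hypothesis (2), closure under $\bf S$ and $\bf Up$ gives $\sf TWSc_{\alpha}={\bf SUp}\,\sf TWSc_{\alpha}$, and the identity ${\bf SUp}\,\sf TWSc_{\alpha}=\sf TRCA_{\alpha}$ is already available by squeezing: $\sf TSc_{\alpha}\subseteq \sf TWSc_{\alpha}\subseteq \sf TRCA_{\alpha}$ (the second inclusion is Theorem \ref{wsc}), so ${\bf SUp}\,\sf TSc_{\alpha}\subseteq {\bf SUp}\,\sf TWSc_{\alpha}\subseteq \sf TRCA_{\alpha}$, and Theorem \ref{ultra} says the leftmost term is all of $\sf TRCA_{\alpha}$. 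You instead rebuild the ${\bf SUp}$ fact from scratch by an ultraproduct-of-dilations construction, which is more work than needed and, as written, contains a soft spot: the defining condition of $\sf TWSc_{\alpha}$ quantifies over endomorphisms of reducts and is \emph{not} first order in the algebraic language, so \L{o}\'s's theorem does not transfer it from the factors to the ultraproduct, contrary to your parenthetical hope that the witnessing properties are ``first-order expressible.'' Your paragraph is also internally redundant on this point: you claim the ultraproduct already ``meets all the $\sf TWSc_{\alpha}$-requirements'' via \L{o}\'s \emph{and} that the ultraproduct-closure hypothesis is needed to place it in $\sf TWSc_{\alpha}$ --- only the second move is legitimate, and it is the only one required. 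The argument closes once you observe that the factors you ultraproduct (locally finite or dimension-complemented dilations) already lie in $\sf TSc_{\alpha}\subseteq \sf TWSc_{\alpha}$, whence hypothesis (2) puts the ultraproduct in $\sf TWSc_{\alpha}$ and closure under $\bf S$ finishes; but at that point you have simply re-derived Theorem \ref{ultra}, which the paper quotes instead.
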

\begin{proof}
\begin{enumroman}
\item $(1)\to (2)$ is trivial.

\item $(2)\to (3)$
If $\sf TWSc_{\alpha}$ is closed under ultraproducts, then because it is closed under forming subalgebras, we have
$\sf TWSc_{\alpha}={\bf SUp}\sf TWSc_{\alpha}=
\sf TRCA_{\alpha}$.

\item $(3)\to (4)$ Since $\sf RTCA_{\alpha}$ is a variety.

\item $(4)\to (1)$ Trivial.

\end{enumroman}
\end{proof}

We let $\sf TSs_{\alpha}$ stand for semisimple algebras. 
In the next example we show that the inclusions 
$$\sf TDc_{\alpha}\subseteq \sf TSs_{\alpha}\subseteq \sf TSc_{\alpha}\subseteq \sf TWSs_{\alpha}\subseteq \sf RTCA_{\alpha}$$ 
are all proper for $\alpha\geq \omega$, for the $\sf CA$ case witness \cite[Remark 2.6.51]{HMT1}.

But first we need a definition.

\begin{definition} 
Let $\alpha$ be any ordinal $>1$ and  $\A\in \sf TCA_{\alpha}$. Then $I$ is an ideal in $\A$ iff $I$ is an ideal in $\Rd_{ca}\A$.
\end{definition}
It can be easily checked that this definition is sound in the sense that ideals so 
defined correpond to congruences (hence to filters) via
$\cong\mapsto \{a\in A: a\cong 0\}$.

\begin{example}\label{strict}

\begin{enumarab}
\item For the first inclusion. Let $m\geq 2$ be a finite ordinal. Take $\A=\wp(^{\alpha}m)$,  it is easy to
see that $\A\in \sf TSc_{\alpha}$. However,
$\A$ is not in $\sf TDc_{\alpha}$ because for every
$s\in {}^{\alpha}m$, we have
$\Delta( \{s\})=\alpha$. 
We show that $\A$  is not even semi-simple by showing that for any constant map $f:\alpha\to m$, the singleton $\{f\}$  is in all the maximal
proper ideals. Let $f$ be such a map.
Let $X=\{f\}$. 
Let $J$ be a maximal proper ideal.
Assume for contradiction that $X\notin J$. Then $X/J\neq 0$ in $\A/J$. Since $\A/J$ is simple, then 
the ideal generated by $X/J$ coincides with $\A/J$, so there exists a finite $\Gamma\subseteq \alpha$
such that ${\sf c}_{(\Gamma)}(X/J)={\sf c}_{(\Gamma)}X/J= 1$. 
This means that ${\sf c}_{(\Gamma)}X\notin J$, but $J$ is maximal, hence 
$- {\sf c}_{(\Gamma)}X\in J$.
Now let $k\in \alpha\sim \Gamma$, and let $t$ be the sequence that agrees with $f$
everywhere except at $k$, where its value is $\neq f(k)$.
Then $t\in -{\sf c}_{(\Gamma)}X$, so $\{t\}\in J$ by maximality of $J$.
But $X\subseteq {\sf c}_k\{t\}$, so $X\in J$ which is
impossible.

\item  let $\A=\wp(^{\alpha}\alpha)$; then of course $\A\in \sf TRCA_{\alpha}$.
We show that $\A\notin \sf TSc_{\alpha}$.
Let $\Theta$ be a bijection from $\alpha$ to $\alpha$
and consider the element $x=\{\Theta\}\in \A$.

Then for any distinct $i,j\in \alpha$ ${\sf s}_i^jx=0$, because $\sigma\in {\sf s}_i^jx$ 
iff $\sigma\circ [i|j]=\Theta$ which is  impossible, because 
$\sigma\circ [i|j](i)=\sigma\circ[i|j](j)$.

We now show that $\A\in \sf TWSc_{\alpha}$.
Let $x\in \A$ be non-zero.
Let   $\rho$ be a one to one finite function with $\rng(\rho)\subseteq \alpha$.
We want to find $H$ as in the conclusion of the definition of a $\sf TWSc_{\alpha}$.
Let $\tau\in {}^{\alpha}\alpha$ such that $k\notin \rng\tau$,
$\tau\upharpoonright \rng \rho\subseteq Id$ and $\tau$ is one to one.
Let $H: \A\to \A$ by
$H(Y)=\{\phi \in {}\wp(^{\alpha}\alpha): \phi\circ \tau \in Y\}.$
Then $H$ is as required.

\end{enumarab}
\end{example}

\begin{corollary}\label{el} Any class $\sf K$, such that $\sf TLf_{\alpha}\subseteq{\sf K}\subseteq \sf TSs_{\alpha}$ is not closed under
ultraproducts, hence is not elementary. The class of semisimple algebras is not closed under $\bf H$. 
\end{corollary}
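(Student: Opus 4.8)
The plan is to read off both assertions from the previous constructions in Example \ref{strict} together with the inclusion chain $\sf TDc_{\alpha}\subseteq \sf TSs_{\alpha}\subseteq \sf TSc_{\alpha}$. For the first assertion, fix a class $\sf K$ with $\sf TLf_{\alpha}\subseteq{\sf K}\subseteq \sf TSs_{\alpha}$ and $\alpha\geq\omega$. I would produce a sequence $\langle\A_n:n\in\omega\rangle$ of locally finite algebras (so in $\sf K$) whose ultraproduct is not semisimple, hence not in $\sf K$. The natural candidate is a sequence approximating the algebra $\A=\wp({}^{\alpha}m)$ of Example \ref{strict}(1), or more simply the following: for each $n$, let $\A_n$ be the subalgebra of $\wp({}^{\alpha}m)$ generated by the ``cylindric rectangles'' determined by finitely many coordinates, so that each element of $\A_n$ has finite dimension set and $\A_n\in\sf TLf_{\alpha}$; arrange that an ultraproduct $\prod_{n}\A_n/U$ over a nonprincipal ultrafilter $U$ contains the diagonal of a constant sequence $f$, i.e.\ an element $\w$ with $\Delta\w=\alpha$ lying in every maximal ideal by exactly the argument of Example \ref{strict}(1). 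Then this ultraproduct has a nonzero element that is killed by every simple quotient, so it is not semisimple, and a fortiori not in $\sf K$. Since $\sf K$ is not closed under ultraproducts, by \Los's theorem it cannot be elementary.

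For the second assertion, that $\sf TSs_{\alpha}$ is not closed under $\bold H$, I would invoke Example \ref{strict}(1) directly. There we exhibited $\A=\wp({}^{\alpha}m)\in\sf TSc_{\alpha}$ which is \emph{not} semisimple, because the singleton $\{f\}$ of a constant map $f$ lies in every maximal proper ideal, so the intersection of the maximal ideals is nontrivial. On the other hand, $\A$ \emph{is} a subdirect product of its simple quotients $\A/J$ as $J$ ranges over maximal ideals, and hence embeds into the product $\prod_{J}\A/J$ of simple (so semisimple) algebras; alternatively one takes $\B$ to be a large enough product of simple $\sf TCA_{\alpha}$'s. That product is semisimple, while the subdirect representation gives a surjection from it onto $\A$ modulo the appropriate congruence --- more precisely, $\A$ is a homomorphic image of $\prod_J \A/J$ via the map induced on the image of the subdirect embedding. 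Thus $\A\in\bold H(\sf TSs_{\alpha})\setminus\sf TSs_{\alpha}$.

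The main obstacle is the homomorphic-image direction: one must be a little careful, since a subdirect product sits \emph{inside} a product of simple algebras but is not itself a homomorphic image of it in the naive sense. The clean way around this is to note that $\A$ is isomorphic to a subdirect product $\A\hookrightarrow\prod_J\A/J=:\B$, so $\A$ is (isomorphic to) a subalgebra of $\B\in\sf TSs_{\alpha}$; but being a subalgebra is not what we want. Instead, to witness failure under $\bold H$ one should start from a \emph{free} or sufficiently rich semisimple algebra mapping onto $\A$: since $\A$ is representable (indeed $\A\in\sf TRCA_{\alpha}$ by Example \ref{strict}(1)) and every representable algebra is a homomorphic image of a free $\sf RTCA_{\alpha}$, and free $\sf RTCA_{\alpha}$'s can be taken semisimple by choosing them simple or by passing through $\sf TLf_{\alpha}$ --- actually the simplest fix: take a simple $\sf TCA_{\alpha}$ that surjects onto $\A$ (embed $\A$ in a simple algebra via a one-point relativization / subdirectly irreducible completion argument, then that simple algebra is in $\sf TSs_{\alpha}$ and maps onto $\A$). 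Once such a surjection from a semisimple algebra onto the non-semisimple $\A$ is in hand, the conclusion $\bold H(\sf TSs_{\alpha})\neq\sf TSs_{\alpha}$ is immediate. I would present the argument via the observation that $\sf TSs_{\alpha}$ is not closed under $\bold H$ because semisimplicity is not inherited by quotients: the quotient of a product of simple algebras by a non-maximal congruence need not be semisimple, and Example \ref{strict}(1) furnishes the explicit counterexample.
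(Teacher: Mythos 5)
There are two genuine gaps here, one in each half of your argument.

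The more serious one is in your proof that $\Ss_{\alpha}$ is not closed under $\bold H$. Your ``simplest fix'' --- ``take a simple $\sf TCA_{\alpha}$ that surjects onto $\A$'' where $\A=\wp({}^{\alpha}m)$ --- is impossible: a simple algebra has only the two trivial congruences, so its only homomorphic images are itself and the one-element algebra, and $\A$ is neither (it is nontrivial and, as you yourself showed, not even semisimple). The subdirect-product detour does not help either, as you correctly observe: it exhibits $\A$ as a \emph{subalgebra} of a product of simple algebras, not as a homomorphic image of one, and there is no canonical surjection from $\prod_J\A/J$ onto $\A$. The paper sidesteps the need for any explicit witness: $\Ss_{\alpha}$ is closed under $\bf S$ and $\bf P$, and by the first half of the corollary it is not closed under ultraproducts, hence it is not a variety; if it were also closed under $\bold H$ it would satisfy ${\bf HSP}\,\Ss_{\alpha}=\Ss_{\alpha}$ and so, by Tarski's theorem, be a variety --- contradiction. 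This indirect argument is both correct and essentially forced, since producing an explicit semisimple algebra with a non-semisimple quotient is exactly the delicate point your attempt runs aground on.

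In the first half, your strategy (an ultraproduct of locally finite algebras escaping $\Ss_{\alpha}$) is the same as the paper's, but the decisive step is only asserted. You need the ultraproduct $\prod_n\A_n/U$ to contain a \emph{nonzero element lying in every maximal ideal}; merely arranging an element of full dimension set is not enough, since simple algebras such as $\wp({}^{\alpha}\alpha)$ contain such elements. The argument of Example \ref{strict}(1) that you invoke is tied to the concrete algebra $\wp({}^{\alpha}m)$ (it manipulates individual points $t$ of the base and their singletons) and does not transfer ``exactly'' to an abstract ultraproduct of proper subalgebras; you would have to verify that enough singletons survive into $\prod_n\A_n/U$ and redo the maximal-ideal computation there. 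The paper avoids this by taking a single simple, locally finite, non-discrete $\A$, forming the ultrapower ${}^{J}\A/F$ over the ultrafilter $F$ on the finite subsets of an infinite set generated by the sets $M_{\Gamma}$, and quoting \cite[Remark 2.4.59]{HMT1} for the fact that this ultrapower is not semisimple. If you want a self-contained proof along your lines, that reference (or a reproduction of its argument) is the missing ingredient.
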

\begin{proof}
Let $\A$ be a simple, locally finite, non-discrete cylindric algebra of dimension $\alpha$,
Here non-discrete means that there is an $i\in \alpha$ such that
${\sf c}_i\neq Id$ expanded by $I_i=Id$ for all $i<\alpha$.

Let $I$ be an infinite set and $J=\{\Gamma: \Gamma\subseteq I, |\Gamma|<\omega\}$.
For $\Gamma\in J$, let $M_{\Gamma}=\{\Delta\in J: \Gamma\subseteq \Delta\}$.
Let $F$ be an ultrafilter on $J$ that contains $M_{\Gamma}$ for every $\Gamma\in J$; clearly exists
for $M_{\Gamma_1}\cap M_{\Gamma_2}=M_{\Gamma_1\cup \Gamma_2}.$

Let $\B$ be the following ultrapower of $\A$, $\B={}^J\A/F$. Then it is proved in
\cite[remark 2.4.59]{HMT1} that $\Rd_{ca}\B$ is not semi-simple and not dimension complemented, 
hence $\B$ is not semi-simple, because
$I$ is an ideal in $\B$ if and only if it is an ideal in $\Rd_{ca}\B$. 
Obviously $\B$ is also not dimension complemented and we are done.

For the last part since ${\bf SP}\Ss_{\alpha}=\Ss_{\alpha}$ and the latter is not a variety, hence
${\bold H}\Ss_{\alpha}\neq \Ss_{\alpha}$.

\end{proof}

\begin{corollary} \label{ultraproducts}$\sf TSc_{\alpha}$ is not closed under ultraproducts hence it is not first order axiomatizable, least a variety.
\end{corollary}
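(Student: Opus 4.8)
The plan is to deduce the failure of closure under ultraproducts from two facts already in hand: Theorem \ref{ultra}, which says ${\bf SUp}\,\sf TSc_{\alpha}=\sf RTCA_{\alpha}$, and the second item of Example \ref{strict}, which exhibits a representable algebra lying outside $\sf TSc_{\alpha}$. Non-axiomatizability and non-varietyhood then come for free from \Los's theorem.

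First I would isolate the one point that is not purely formal, namely that $\sf TSc_{\alpha}$ is closed under the formation of subalgebras. Indeed, each operator ${\sf s}_i^j$ with $i\neq j$ is a term operation of $\TCA_{\alpha}$, so for $\B\subseteq\A$ and $x\in B\sim\{0\}$ the element ${\sf s}_i^j x$ is the same whether computed in $\B$ or in $\A$; and since $x$ is also non-zero in $\A$, the defining clause of $\sf TSc_{\alpha}$ transfers from $\A$ down to $\B$. Hence ${\bf S}\,\sf TSc_{\alpha}=\sf TSc_{\alpha}$.

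Now I argue by contradiction. Suppose $\sf TSc_{\alpha}$ were closed under ultraproducts. Combined with closure under ${\bf S}$, this yields ${\bf SUp}\,\sf TSc_{\alpha}=\sf TSc_{\alpha}$, since a subalgebra of an ultraproduct of members of $\sf TSc_{\alpha}$ would again be a member. But Theorem \ref{ultra} gives ${\bf SUp}\,\sf TSc_{\alpha}=\sf RTCA_{\alpha}$, whence $\sf TSc_{\alpha}=\sf RTCA_{\alpha}$. This contradicts the second item of Example \ref{strict}, where the full set algebra $\wp(^{\alpha}\alpha)$ is shown to belong to $\sf RTCA_{\alpha}$ (in fact to $\sf TWSc_{\alpha}$) while failing to be a substitution algebra: for a bijection $\Theta$ of $\alpha$, the non-zero element $\{\Theta\}$ satisfies ${\sf s}_i^j\{\Theta\}=0$ for all distinct $i,j<\alpha$. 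Therefore $\sf TSc_{\alpha}$ is not closed under ultraproducts.

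Finally, every first order axiomatizable class --- \emph{a fortiori} every variety --- is closed under ultraproducts by \Los's theorem, so $\sf TSc_{\alpha}$ is neither elementary nor a variety. There is no genuine obstacle in this argument; the only slightly non-trivial ingredient is the closure of $\sf TSc_{\alpha}$ under ${\bf S}$, which is immediate once one recalls that the substitutions are term operations. (Alternatively one could try to check directly that the ultrapower $\B$ built in the proof of Corollary \ref{el} lies outside $\sf TSc_{\alpha}$, but that would require producing from $\B$ a non-zero $x$ together with a finite $\Gamma\subseteq\alpha$ such that ${\sf s}_i^j x=0$ for all distinct $i,j\in\alpha\sim\Gamma$, which is less transparent than the containment argument above.)
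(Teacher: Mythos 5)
Your argument is correct and is essentially the proof the paper gives: closure of $\sf TSc_{\alpha}$ under $\bf S$ (immediate since ${\sf s}_i^j$ is a term operation), combined with ${\bf SUp}\,{\sf TSc}_{\alpha}={\sf RTCA}_{\alpha}$ from Theorem \ref{ultra} and the witness $\wp({}^{\alpha}\alpha)\in{\sf RTCA}_{\alpha}\sim{\sf TSc}_{\alpha}$ from Example \ref{strict}, forces failure of closure under ultraproducts, and \L{o}\'{s}'s theorem finishes the job. Your explicit verification of the $\bf S$-closure step is a small but welcome addition to what the paper leaves as ``clearly.''
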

\begin{proof} $\sf TSc_{\alpha}$ is clearly closed under forming subalgebras,
hence by theorem \ref{ultra} and example \ref{strict}, it is not closed under
ultraproducts.
\end{proof}

\section{Interpolation and amalgamation}

If we do not have an order (as we do now) what corresponds locally to amalgamation properties, are
{\it congruence extension properties}.
So let us see how these relate to the interpolation properties defined earlier.
Recall that for an algebra $\A$, 
$\Co\A$ stands for the set of all congruence relations on $\A$.

\begin{definition} Let $\A$ be an algebra, and $C\subseteq \bigcup_{\B\subseteq \A}\Co\A$.
$\A$ is said to have {\it the congruence extension property relative to
$C$} if for any $X_1, X_1\subseteq \A$ such that $X_1\cap X_2=X$,
if $R\in \Co(\Sg^{\A}X_1)\cap C$ and $S\in \Co(\Sg^{\A}X_2)\cap C$,
such that $R\cap {}^2\Sg^{\A}(X_1\cap X_2)=S\cap {}^2\Sg^{\A}(X_1\cap X_2),$
then there is a $T\in (\Co\A)\cap C$
such that $T\cap {}^2\Sg^{\A}X_1=R$ and $T\cap {}^2\Sg^{\A}X_2=S$.
If $C=\bigcup_{\B\subseteq \A}\Co\A$, we say that $\A$ 
has {\it the congruence extension property}, or $CP$ for short.
\end{definition}

From now on $\alpha$ is an arbitrary ordinal $>0$. 
We next formulate the above property for free algebras in various forms. We will see that properties of the free algebras of a variety may be reflected
in properties of the corresponding equational consequence relations of the variety, in particular we may focus on properties 
of the equational consequence relation for a {\it countable} list of variables,
and this enables us to restrict our attention to countable free algebras only as shown by 
G. Metcalfe et al. \cite{george}.
The {\it Pigozzi property $PP$}, the {\it Robinson property $RP$}, the {\it Maehara interpolation property $MIP$}, {\it 
the deductive interpolation property $DIP$}  are defined in \cite{george}.
{\it Countable $MIP$}, {\it  countable $DIP$} and {\it countable $RP$}, are the restriction of such properties 
when the variables available are countable.

We note that $MIP$ is the interpolation property corresponding to $TIP$
which in turn is equivalent in varieties in which congruences of subalgebras of an algebra lift to congruences of the algebra 
(which is our case)\footnote{This property is referred to in the literature as the 
{\it congruence extension property}, but we do not use this term here for we have reserved
the term congruence extension property for a different property.} to  $AP$ \cite{george}.
\begin{theorem}\label{app} Let $\alpha$ be an ordinal $>0$. 
Let $V$ be a subvariety of $\sf TCA_{\alpha}$. Then 
the following conditions are equivalent:
\begin{enumarab}
\item $V$ has the $TIP$.
\item $V$ has the (countable) $MIP.$
\item $V$ has the (countable) $DIP$.
\item $V$ has $AP.$
\item Finitely generated algebras in $V$ has $AP$
\item The free algebras have the $CP.$
\item The (countable) free algebras have the $UIP$.
\item The (countable) free algebras have $EIP.$
\item $V$ has $PP.$
\item $V$ has the (countable) $RP.$
\end{enumarab}
\end{theorem}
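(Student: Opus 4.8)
**The plan is to establish the ten equivalences by a cycle (plus a few side implications), leaning heavily on the earlier structural results of the paper.** The backbone consists of: the lattice isomorphism $\mathrm{Filt}(\A)\cong\Co(\A)$ from Theorem~\ref{lattice}, which lets us pass freely between filters and congruences and, crucially, gives us the ``congruences of subalgebras lift'' property (item~(v) of the Lemma on filters); Theorem~\ref{tip}, which already gives $\mathit{TIP}\Leftrightarrow AP$ for varieties; and the general fact, cited from \cite{george}, that in a variety the equational consequence relation — hence all the ``deductive'' properties $\mathit{MIP}$, $\mathit{DIP}$, $\mathit{RP}$, $\mathit{PP}$, $\mathit{CP}$, $\mathit{UIP}$, $\mathit{EIP}$ — can be tested on countably generated free algebras. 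So the first move is to record that, since our variety $V\subseteq\sf TCA_\alpha$ has the congruence-extension-for-subalgebras property (via Theorem~\ref{lattice} and the filter lemma), the results of \cite{george} apply verbatim and give us the block $(1)\Leftrightarrow(2)\Leftrightarrow(3)\Leftrightarrow(9)\Leftrightarrow(10)$ among the syntactic formulations, together with the ``countable = full'' reductions.

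**Next I would connect the algebraic side.** The implication $(1)\Leftrightarrow(4)$ is exactly Theorem~\ref{tip}. For $(4)\Leftrightarrow(5)$: one direction is trivial; for the converse, given finitely-generated $AP$ one amalgamates arbitrary $\A_0,\A_1,\A_2$ by a directed-colimit / ultraproduct argument, amalgamating finitely generated pieces coherently and using that $V$ is closed under the relevant limits (this is the one spot where I would be slightly careful, but it is a standard reduction for varieties). The heart of the ``congruence'' part is $(6)\Leftrightarrow(1)$ (equivalently $(6)\Leftrightarrow(4)$): here I would translate $\mathit{CP}$ for the free algebra $\Fr_\beta V$ directly into $\mathit{TIP}$ using Theorem~\ref{lattice}. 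Given embeddings $i_1:\A_0\hookrightarrow\A_1$ and a homomorphism $i_2:\A_0\to\A_2$ with $\A_i$ finitely generated, present them as $\A_1=\Sg^{\Fr}X_1$, $\A_2=\Sg^{\Fr}X_2$ with $X_1\cap X_2=X$ generating (a copy of) $\A_0$, realize $i_2$ as a quotient via a congruence $S$ on $\Sg X_2$ and $i_1$ via the trivial congruence on $\Sg X_1$; $\mathit{CP}$ extends the pair $(R,S)$ to $T\in\Co\Fr$, and $\Fr/T$ together with the induced maps is the desired $\mathit{TI}$-amalgam. Conversely, $\mathit{TIP}$ produces, for a compatible pair $(R,S)$, a common target $\D$ with $m_1$ injective and $m_2$ a homomorphism; pulling the kernel of $\Fr\to\D$ back gives the extension $T$, after first using item~(v) of the filter lemma to arrange $T\cap{}^2\Sg X_1=R$ exactly.

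**Finally the $\mathit{UIP}$/$\mathit{EIP}$ items.** For $(7),(8)$ I would show $(7)\Leftrightarrow(6)$ and $(8)\Leftrightarrow(6)$ using that $\sf TCA_\alpha$ has the quantifier apparatus: $\mathit{UIP}$ says one can push the lower term up past the cylindrifications $\mathsf{q}_{(\Gamma)}$ while keeping the upper term fixed, and $\mathit{EIP}$ is the dual; each is the ``semilattice'' shadow of the congruence-extension statement, and the translation is the same filter/congruence dictionary applied to principal filters generated by symmetric differences $x\oplus y$, with $\mathsf{q}_{(\Gamma)}$ and $\mathsf{c}_{(\Gamma)}$ accounting for the dimension set of the generating elements (exactly the mechanism already used in the proof of Theorem~\ref{SUPAP}). **The main obstacle I expect is $(6)\Leftrightarrow(4)$ in its full strength** — specifically, bookkeeping the generating sets $X_1,X_2$ with $X_1\cap X_2$ generating the base algebra, and matching the ``only $m_1$ injective'' asymmetry of $\mathit{TIP}$ with the symmetric-looking $\mathit{CP}$ (this is where item~(v) of the filter lemma and the trick $``R=$ identity congruence on $\Sg X_1$'' do the real work); the other implications are then either cited from \cite{george}, already proved (Theorem~\ref{tip}), or routine dictionary translations via Theorem~\ref{lattice}.
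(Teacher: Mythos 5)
Your plan reproduces the paper's proof essentially step for step: the syntactic block (1)--(3), (5), (9), (10) is delegated to \cite{george}, $AP\Leftrightarrow CP$ is obtained by presenting the algebras to be amalgamated as quotients of subalgebras $\Sg^{\A}X_1,\Sg^{\A}X_2$ of a free algebra and extending/restricting congruences via freeness, and $CP\Leftrightarrow UIP/EIP$ is the filter--congruence dictionary in which the finite prefix ${\sf q}_{(\Gamma)}$ enters precisely because $\Fl^{\A}\{x\}$ consists of the elements above ${\sf q}_{(\Gamma)}x$ for some finite $\Gamma$ (item (1) of the filter lemma), with $EIP$ handled dually through ideals. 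The one spot to tighten is your direct $CP\Rightarrow TIP$ reduction: taking the trivial congruence on $\Sg^{\A}X_1$ against a nontrivial $S$ on $\Sg^{\A}X_2$ violates $CP$'s hypothesis that the two congruences agree on ${}^2\Sg^{\A}(X_1\cap X_2)$ whenever $i_2$ is not injective, so (as you half-anticipate) one should prove $CP\Leftrightarrow AP$ in the symmetric setting and then pass to $TIP$ by the kernel-extension argument of Theorem \ref{tip} using item (v) of the filter lemma, which is exactly the paper's route.
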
 
\begin{proof}
It is known that for any variety $V$ \cite{george} all of  (1)-(6) and (9)-(10) 
are equivalent to each other.

We prove $(4)\implies (6)$ in a form to be used later on.  
Assume that $V$ has $AP$ and let $\A$ be the free algebra on a non empty set of generators.
For $R\in \Co\A$ and $X\subseteq  A$, by $(\A/R)^{(X)}$ we understand the subalgebra of
$\A/R$ generated by $\{x/R: x\in X\}.$ We want to show that $\A$ has $CP$.
Let $\A$, $X_1$, $X_2$, $R$ and $S$ be as specified in in the definition of $CP$.
Define $$\theta: \Sg^{\A}(X_1\cap X_2)\to \Sg^{\A}X_1/R$$
by $$a\mapsto a/R.$$
Then $ker\theta=R\cap {}^2\Sg^{\A}(X_1\cap X_2)$ and $Im\theta=(\Sg^{\A}X_1/R)^{(X_1\cap X_2)}$.
It follows that $$\bar{\theta}:\Sg^{\A}(X_1\cap X_2)/R\cap {}^2\Sg^{\A}(X_1\cap X_2)\to (\Sg^{\A}X_1/R)^{(X_1\cap X_2)}$$
defined by
$$a/R\cap {}^{2}\Sg^{\A}(X_1\cap X_2)\mapsto a/R$$
is a well defined isomorphism.
Similarly
$$\bar{\psi}:\Sg^{\A}(X_1\cap X_2)/S\cap {}^2\Sg^{\A}(X_1\cap X_2)\to (\Sg^{\A}X_2/S)^{(X_1\cap X_2)}$$
defined by
$$a/S\cap {}^{2}\Sg^{\A}(X_1\cap X_2)\mapsto a/S$$
is also a well defined isomorphism.
But $$R\cap {}^2\Sg^{\A}(X_1\cap X_2)=S\cap {}^2\Sg^{\A}(X_1\cap X_2),$$
Hence
$$\phi: (\Sg^{\A}X_1/R)^{(X_1\cap X_2)}\to (\Sg^{\A}X_2/S)^{(X_1\cap X_2)}$$
defined by
$$a/R\mapsto a/S$$
is a well defined isomorphism.
Now
$(\Sg^{\A}X_1/R)^{(X_1\cap X_2)}$ embeds into $\Sg^{\A}X_1/R$
via the inclusion map; it also embeds in $\Sg^{\A}X_2/S$ via $i\circ \phi$ where $i$
is also the inclusion map.
For brevity let $\A_0=(\Sg^{\A}X_1/R)^{(X_1\cap X_2)}$, $\A_1=\Sg^{\A}X_1/R$ and $\A_2=\Sg^{\A}X_2/S$ and $j=i\circ \phi$.
Then $\A_0$ embeds in $\A_1$ and $\A_2$ via $i$ and $j$ respectively.
Then there exists $\B\in V$ and injective homomorphisms
$f$ and $g$ from $\A_1$ and $\A_2$ respectively to
$\B$ such that $f\circ i=g\circ j$.
Let $$\bar{f}:\Sg^{\A}X_1\to \B$$ be defined by $$a\mapsto f(a/R)$$ and $$\bar{g}:\Sg^{\A}X_2\to \B$$
be defined by $$a\mapsto g(a/R).$$
Let $\B'$ be the algebra generated by $\rng f\cup \rng g$.
Then $\bar{f}\cup \bar{g}\upharpoonright X_1\cup X_2\to \B'$ is a function since $\bar{f}$ and $\bar{g}$ coincide on $X_1\cap X_2$.
By freeness of $\A$, there exists $h:\A\to \B'$ such that $h\upharpoonright_{X_1\cup X_2}=\bar{f}\cup \bar{g}$.
Let $T=kerh $. Then it is not hard to check that
$$T\cap {}^2 \Sg^{\A}X_1=R \text { and } T\cap {}^2\Sg^{\A}X_2=S.$$
$T$ induces the required congruence.

$(6)\implies (7)$. Let $x\in \Sg^{\A}X_1$, $z\in \Sg^{\A}X_2$ and assume that $x\leq z$.
Then $$z\in (\Fl^{\A}\{x\})\cap \Sg^{\A}X_1).$$
Let $$M=\Fl^{\Sg^{\A}X_1}\{x\}\text { and } N=\Fl^{\Sg^{\A}X_2}(M\cap \Sg^{\A}(X_1\cap X_2)).$$
Then $$M\cap \Sg^{\A}(X_1\cap X_2)=N\cap \Sg^{\A}(X_1\cap X_2).$$
By identifying ideals with congruences, and using the congruence extension property,
there is a filter $P$ of $\A$
such that $$P\cap \Sg^{\A}X_1=N\text { and }P\cap \Sg^{\A}X_2=M.$$
It follows that
$$\Fl^{\A}(N\cup M)\cap \Sg^{\A}X_1\subseteq P\cap \Sg^{\A}X_1=N.$$
Hence
$$(\Fl^{(\A)}\{z\})\cap \Sg^{\A}X_1\subseteq N.$$
and we have
$$z\in \Fl^{\Sg^{\A}X_1}[\Fl^{\Sg^{\A}X_2}\{x\}\cap \Sg^{\A}(X_1\cap X_2).]$$his implies that there is an element $y$
such that $$z\geq y\in  \Sg^{\A}(X_1\cap X_2),$$
and $y\in \Fl^{\Sg^{\A}X_2}\{x\}$. Hence, there exists a finite
$\Gamma\subseteq \alpha$ such that $y\geq {\sf q}_{(\Gamma)}x$,
so we get
$${\sf q}_{(\Gamma)}x\leq y\leq z.$$

$(7)\implies (1)$. Let $\A, \B, \C\in V$, with inclusions
$m:\C\to \A$, $n:\C\to \B$. We want to find an amalgam. Let $\D$ be the free algebra on $|D|$ generators, where
$|D|>max{|B|, |C|}$ for all $i<|D|$.
Let $h:\D\to \C$,
$h_1:\D\to \A$, $h_2:\D\to \B$ be homomorphisms
such that for $x\in h^{-1}(\C)$,
$$h_1(x)=m\circ h(x)=n\circ h(x)=h_2(x).$$
Such homomorphisms clearly exist by the freeness, cardinality of $\D$, and the fact that $\A, \B, \C\in V$,
Let $\D_1=h_1^{-1}(\A)$ and $\D_2=h_2^{-1}(\B)$. Then $h_1:\D_1\to \A$, and $h_2:\D_2\to \B$.
Let $M=ker h_1$ and $N=ker h_2$, and let
$\bar{h_1}:\D_1/M\to \A, \bar{h_2}:\D_2/N\to \B$ be the induced isomorphisms.
Let $l_1:h^{-1}(\C)/h^{-1}(\C)\cap M\to \C$ be defined via $\bar{x}\to h(x)$, and
$l_2:h^{-1}(\C)/h^{-1}(\C)\cap N$ to $\C$ be defined via $\bar{x}\to h(x)$.
Then those are well defined, and hence
$k^{-1}(\C)\cap M=h^{-1}(\C)\cap N$.
We show that $\Fl^{\D}(M\cup N)$ is a proper filter  of $\D$ and that $\D/P$ is the required amalgam.
Let $x\in \Fl^{\D}(M\cup N)\cap \D_1$.
Then there exist $b\in M$ and $c\in N$ such that $b\cdot c\leq x$.
Thus $c\leq x+ -b$.
But $x+ -b\in \D_1$ and $c\in \D_2$, it follows by assumption that there exist
$d\in \D_1\cap \D_2$ such that ${\sf q}_{(\Gamma)}c\leq d\leq x+ -b$.
Notice that $c\in N$ so ${\sf q}_{(\Gamma)}c\in N$, hence $d\in N$, so 
$d\in M$,  because $M\cap \D_1\cap \D_2=N\cap \D_1\cap \D_2$.   Hence $x\in M.$
Thus $P=\Fl^{\D}(M\cup N)$ is proper, and $\D/P$ is the required amalgam.

Countable free algebras have $UIP$ implies $AP$, because we can restrict our 
attention only to countable algebras being 
amalgamated. $AP$ equivalent to countable $EIP$ is exactly like above by working 
with ideals instead of filters.
\end{proof}

Since $UIP$ and $EIP$ are equivalent in the case of varieties we call the (one) property they express {\it the almost interpolation property}, 
briefly $AIP$.

Before our next theorem which provides infinitely many varieties satisfying the conditions of theorem \ref{app}.
For $R\subseteq_{\omega} \alpha\times \alpha$, $\bar{{\sf d}}(R)=\prod_{i,j\in R\sim Id} -{\sf d}_{ij}.$
The next result is the topological analogue  of a result of Comer \cite{Comer} 
proved for cylindric algebras,  but we present a different proof depending on theorem \ref{sc}.

\begin{theorem} If $\alpha\geq \omega$, then there 
are infinitely many subvarieties of ${\sf RTCA}_{\alpha}$ that has $AP,$ hence satisfy all conditions in theorem
\ref{app}.
\end{theorem}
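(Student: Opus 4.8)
The plan is to produce the family explicitly, as the subvarieties of \emph{bounded characteristic}, and to obtain their amalgamation property from Theorem \ref{sc} by reducing to the class ${\sf TSc}_{\alpha}$. For each finite $n\geq 1$ I would put
$$V_n=\{\A\in {\sf RTCA}_{\alpha}:\A\models \bar{{\sf d}}(\Gamma\times\Gamma)=0\ \text{ for every finite }\Gamma\subseteq\alpha\text{ with }|\Gamma|=n+1\}.$$
Since ${\sf RTCA}_{\alpha}$ is a variety and we only adjoin equations, each $V_n$ is a subvariety of ${\sf RTCA}_{\alpha}$, and $V_n\subseteq V_{n+1}$ since $\bar{{\sf d}}(\Gamma'\times\Gamma')\leq \bar{{\sf d}}(\Gamma\times\Gamma)$ whenever $\Gamma\subseteq\Gamma'$. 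To see that the chain is strict I would test it on the cylindric set algebra $\wp({}^{\alpha}(n+1))$, topologised discretely so that $I_i=\Id$ for all $i<\alpha$: this is a topological set algebra and hence lies in ${\sf RTCA}_{\alpha}$, it satisfies the defining equations of $V_{n+1}$, but it fails those of $V_n$, because $\bar{{\sf d}}(\Gamma\times\Gamma)$ computed there is the set of $s\in {}^{\alpha}(n+1)$ that are injective on $\Gamma$, which is nonempty precisely when $|\Gamma|\leq n+1$. So $\{V_n:n\geq 1\}$ would be an infinite family of pairwise distinct subvarieties of ${\sf RTCA}_{\alpha}$.

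The crux will be the inclusion $V_n\subseteq {\sf TSc}_{\alpha}$. Given $\A\in V_n$, I would represent $\A$ as a subdirect product of subalgebras of topological set algebras; every subdirect factor $\A_k$ is a homomorphic image of $\A$, hence still satisfies $\bar{{\sf d}}(\Gamma\times\Gamma)=0$ for $|\Gamma|=n+1$, while $\A_k$ is a subalgebra — so one containing all diagonal elements — of a topological set algebra (a weak one when $\alpha\geq\omega$) over a base $U_k$. Computing $\bar{{\sf d}}(\Gamma\times\Gamma)$ inside $\A_k$ concretely, it equals $\{s:s\restriction\Gamma\text{ injective}\}$, so its vanishing forces $|U_k|\leq n$; and a pigeon-hole argument over the infinitely many coordinates lying outside any prescribed finite $\Gamma_0\subseteq\alpha$ — available precisely because $|U_k|$ is finite — then shows $\A_k\in {\sf TSc}_{\alpha}$. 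As ${\sf TSc}_{\alpha}$ is closed under subalgebras and products, this gives $\A\in {\sf TSc}_{\alpha}$.

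Granting this, the amalgamation property is immediate. For $\A,\B,\C\in V_n$ and embeddings $f:\C\to\A$, $g:\C\to\B$, the previous step puts $\A,\B,\C$ in ${\sf TSc}_{\alpha}$, so Theorem \ref{sc} yields $\D\in {\sf TSc}_{\alpha}$ and embeddings $m:\A\to\D$, $h:\B\to\D$ with $m\circ f=h\circ g$. One then checks $\D\in V_n$ at once: $\D\in {\sf TSc}_{\alpha}\subseteq {\sf RTCA}_{\alpha}$, and since $\bar{{\sf d}}(\Gamma\times\Gamma)$ is a variable-free term, the element it names in $\D$ is the $m$-image of the element it names in $\A$, hence equals $m(0)=0$, for every $\Gamma$ with $|\Gamma|=n+1$; so $\D\in V_n$. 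Thus $V_n$ has $AP$, and being a variety it then satisfies every condition of Theorem \ref{app}. By the refinement in Theorem \ref{sc}, if $\A,\B,\C$ are simple (respectively, semisimple) the amalgam $\D$ may moreover be chosen simple (respectively, semisimple), still inside $V_n$.

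The step I expect to require genuine care is $V_n\subseteq {\sf TSc}_{\alpha}$: although elementary, it must be carried out in the infinite-dimensional setting, where ``representable'' is witnessed only by \emph{weak} (or generalised) set algebras, so one must confirm that both the base-bounding identity for $\bar{{\sf d}}(\Gamma\times\Gamma)$ and the pigeon-hole step remain valid there. Everything past that inclusion is just Theorem \ref{sc} together with the triviality that variable-free terms are preserved by homomorphisms.
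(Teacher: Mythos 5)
Your proposal is correct and follows essentially the same route as the paper: it exhibits the subvarieties cut out by characteristic-bounding (constant-term) equations, reduces their amalgamation to Theorem \ref{sc} via the inclusion into ${\sf TSc}_{\alpha}$, and observes that the amalgam inherits the defining equations because homomorphisms preserve variable-free terms. The only differences are cosmetic — you take ``characteristic at most $n$'' where the paper takes ``characteristic exactly $k$'', and you supply the details of the inclusion $V_n\subseteq {\sf TSc}_{\alpha}$ (subdirect decomposition plus pigeon-hole) that the paper merely asserts.
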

\begin{proof} For each finite $k \geq 1$, let $V_k$ be the variety consisting of algebras having characteristic $k$ endowed with interior operators. 
$\A\in V_k$ iff $\A\in \sf TCA_{\alpha}$ and $\A$ satisfies the equations ${\sf c}_{(k)}\bar{{\sf d}}(k\times k)=1$ and 
${\sf c}_{(k+1)}\bar{{\sf d}}((k+1)\times (k+1))=0$.
Then $V_k\subseteq \sf TSc_{\alpha}$ and 
amalgams  found by theorem \ref{sc} are necessarily of characteristic $k$ since
embeddings preserve equations.
\end{proof}
Below in corollary \ref{infinitevarieties} we give infinitely many varieties that do not have $AP$ but their simple algebras do,
hence, in view of the coming theorem \ref{weakIP},  their free algebras have $WIP$.

Let $L$ be a signature, and let $Y$ be a set of variables.  
Then ${\sf Tm}(Y)$ denotes the absolutely free algebra of this signature on the set $Y$;
the  term algebra. Its elements are terms having variables from $Y$. ${\sf Eq}(Y)$ 
is the set of all ordered pairs of terms called {\it equations} written as $\alpha=\beta$ 
and denoted by $\epsilon, \delta.$ The variables occuring in a term, equation or set of equations $S$ is denoted
by ${\sf Var}(S)$. Given a variety $V$ of algebras in this signature, the free 
algebra of $V$ on a set $Z$ of free generators is denoted by 
$\Fr_Z(V).$

Let $\sf K$ be a class of algebras of the same signature and $Y$ be an arbitrary set of variables. 
For any $\Sigma\cup \{\epsilon\}\subseteq {\sf Eq}(Y)$, we write
$\Sigma\models_{\sf K}^Y\epsilon$, or simply $\Sigma\models_{\sf K} \epsilon$ and sometimes only $\Sigma\models \epsilon$, 
 iff for all $\A\in \sf K$ and every homomorphism $h :{\sf Tm}(Y)\to \A$, if $\Sigma\subseteq ker \phi$ then $\epsilon \in ker \phi$. 
This is a {\it substitution invariant consequence relation} as defined in \cite{george}, and if ${\sf K}$ happens to be  a variety
then it is also {\it finitary}; $\Sigma\models \epsilon$ iff $\Sigma'\models \epsilon$ for
some finite $\Sigma'\subseteq \Sigma$.

Let $Z$ be a set of variables and ${\sf Tm}(Z)$ be the absolutely free algebra over $Z$ of type $\sf TCA_{\alpha}$.
Let $h:\Tm(Z)\to \Fr_{Z}{\sf RTCA}_{\alpha}$ be the natural map, and denote $h(\alpha)$ by $\bar{\alpha}$.
For an equation $\epsilon$ of the form $\alpha=\beta$, we write $\bar{\epsilon}$ for $\bar{\alpha}=\bar{\beta}$
and we write $\bar{\Sigma}$ for $\{\bar{\epsilon}: \epsilon\in \Sigma\}.$

\begin{definition}\cite{george}
A variety $V$ has the {\it maximal Pigozzi property}, $PP_m$ for short, 
if for any sets $Y$ and $Z$
whenever
\begin{enumroman}
\item $Y\cap Z\neq \emptyset$,

\item  $\Theta_Y\in \Co(\Fr_Y(V))$ and $\Theta_Z\in \Co(\Fr_Z(V));$ are maximal congruences

\item $\Theta_Y\cap \Fr_{Y\cap Z}(V)^2=\Theta_Z\cap \Fr_{Y\cap Z}(V)^2$, then

$\Theta_Y$ and $\Theta_Z$ have a common extension to $\Fr_{Y\cup Z}(V).$
\end{enumroman}
\end{definition}

In some cases, properties of free algebras may be expressed as properties of the equational consequence relations of
the variety, as we proceed to show:

\begin{definition}\cite{george} A variety $V$ has the {\it maximal Robinson property} $RP$ if for each set of variables $Y,$ whenever
\begin{enumroman}
\item  $\Sigma \cup \Pi\cup \{\epsilon\}\subseteq {\sf Eq}(Y) $ and ${\sf Var}(\Sigma)\cap {\sf Var}(\Pi\cup \{\epsilon\})\neq \emptyset;$

\item  $\Sigma\models _V \delta$ iff $\Pi\models _V \delta$, for all $\delta\in \sf Eq(Y)$ satisfying

${\sf Var}(\delta)\subseteq {\sf Var}(\Pi)\subseteq {\sf Var}(\Sigma);$

\item  ${\sf Var}(\epsilon)\subseteq {\sf Var}(\Sigma);$

\item $\Sigma\cup \Pi\models_V \epsilon;$
\item The congruences generated by $\bar{\Sigma}$ and $\bar{\Pi}$ are maximal,
\end{enumroman}
then $\Sigma\models \epsilon.$

\end{definition}

\begin{theorem}\label{weakIP}
The following conditions are equivalent for a subvariety $V$ of $\sf TCA_{\alpha}$, $\alpha>1$:
\begin{enumarab}
\item Semisimple algebras in $V$ have $AP$.
\item Simple algebras have in $V$  $AP$.
\item Free $V$ algebras have $CP$ with respect to maximal congruences.
\item Free $V$ algebras have $WIP$.
\item $V$ has $PP_m.$
\item $V$ has $RP_m.$ 
\end{enumarab}
\end{theorem}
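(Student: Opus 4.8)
The plan is to prove the chain of equivalences by exploiting the bridge, established in Theorem~\ref{app} and Theorem~\ref{lattice}, between amalgamation-type properties of classes of algebras and interpolation-type properties of the associated free algebras, now restricted throughout to \emph{maximal} congruences (equivalently maximal filters, by the last assertion of Theorem~\ref{lattice}). The equivalence $(1)\Leftrightarrow(2)$ should come essentially for free: a semisimple algebra is a subdirect product of simple ones, and one direction is trivial since simple algebras are semisimple; for the other, given an amalgamation problem among semisimple algebras one decomposes it coordinatewise and amalgamates the simple factors, then takes a product of the resulting amalgams. (This is the same manoeuvre used in Theorem~\ref{sc} to pass from simple to semisimple amalgams.) The core of the work is the cycle $(2)\Rightarrow(3)\Rightarrow(4)\Rightarrow(2)$, together with $(3)\Leftrightarrow(5)$ and $(4)\Leftrightarrow(6)$, the latter two being the ``maximal'' analogues of the $PP\Leftrightarrow CP$ and (roughly) $WIP$-as-consequence-relation equivalences recorded in \cite{george}.

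For $(2)\Rightarrow(3)$ I would mimic the proof of $(4)\Rightarrow(6)$ inside Theorem~\ref{app}, but carrying a maximality hypothesis all the way through. Start with the free algebra $\A$ on a nonempty generating set, sets $X_1,X_2\subseteq A$ with $X_1\cap X_2=X$, and congruences $R$ on $\Sg^{\A}X_1$, $S$ on $\Sg^{\A}X_2$ that are \emph{maximal} and agree on $\Sg^{\A}X$. Exactly as there, the quotients $\A_1=\Sg^{\A}X_1/R$ and $\A_2=\Sg^{\A}X_2/S$ are \emph{simple} (maximal congruence on a free algebra), and the common image $\A_0=(\Sg^{\A}X_1/R)^{(X_1\cap X_2)}\cong(\Sg^{\A}X_2/S)^{(X_1\cap X_2)}$ embeds into both. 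Apply $AP$ for simple algebras in $V$: but we must amalgamate $\A_1,\A_2$ over $\A_0$, and $\A_0$ need not be simple. The fix is to first divide $\A_0$ by a maximal congruence $\Theta_0$ on it, observe (using that $\A_1,\A_2$ are simple so their congruence lattices are trivial) that the pushouts of $\A_1,\A_2$ along $\A_0\twoheadrightarrow\A_0/\Theta_0$ are again $\A_1,\A_2$ respectively, and then amalgamate the simple triple $\A_0/\Theta_0\hookrightarrow\A_1,\A_2$. Pulling the resulting embeddings back and using freeness of $\A$ to lift $\bar f\cup\bar g$ on $X_1\cup X_2$ to a homomorphism $h:\A\to\B'$, one sets $T=\ker h$ and checks $T\cap{}^2\Sg^{\A}X_i$ equals $R$, $S$; since $B'$ is generated by the images of two simple algebras it is subdirectly small, and one shrinks $T$ to a maximal congruence dominating it while preserving the two restrictions (here the maximality of $R,S$ is what guarantees the restrictions are themselves maximal and hence preserved). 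For $(3)\Rightarrow(4)$, repeat the argument $(6)\Rightarrow(7)$ from Theorem~\ref{app} verbatim at the level of maximal filters: from $x\leq z$ with $x\in\Sg^{\A}X_1$, $z\in\Sg^{\A}X_2$ one builds $M=\Fl^{\Sg^{\A}X_1}\{x\}$ and $N=\Fl^{\Sg^{\A}X_2}(M\cap\Sg^{\A}X)$, extends these (now using item~(v) of the Lemma on filters, whose ``furthermore'' clause precisely says a maximal filter of the subalgebra extends to a \emph{maximal} filter of the whole algebra) to a common maximal filter $P$ on $\A$, and extracts $y\in\Sg^{\A}X$ with $z\geq y\geq{\sf q}_{(\Gamma)}x$ for a finite $\Gamma$, which is exactly $WIP$. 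Finally $(4)\Rightarrow(2)$: given simple $\A,\B,\C\in V$ with embeddings $m:\C\to\A$, $n:\C\to\B$, take a large free $\D$ with surjections $h:\D\to\C$, $h_i$ onto $\A,\B$ compatible over $h^{-1}(\C)$, and run the $(7)\Rightarrow(1)$ argument of Theorem~\ref{app} to show $\Fl^{\D}(\ker h_1\cup\ker h_2)$ is proper; dividing $\D$ by a maximal filter containing it yields a simple amalgam.

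The remaining equivalences $(3)\Leftrightarrow(5)$ and $(4)\Leftrightarrow(6)$ are the translations between free-algebra congruence statements and equational-consequence statements. For $(3)\Leftrightarrow(5)$ one identifies $CP$ with respect to maximal congruences on free algebras with $PP_m$ by taking $Y,Z$ to be the two generating sets and noting $\Fr_{Y\cap Z}(V)$ sits inside $\Fr_Y(V)$ and $\Fr_Z(V)$ as $\Sg$ of the shared generators; this is word-for-word the argument in \cite{george} relating $CP$ to $PP$, with ``maximal'' inserted everywhere. For $(4)\Leftrightarrow(6)$ one uses that $\models_V$ is finitary (since $V$ is a variety) to recast $WIP$ for a finitely-generated piece as: whenever $\Sigma\cup\Pi\models_V\epsilon$ with the variable conditions of $RP_m$ and the generated congruences $\bar\Sigma,\bar\Pi$ maximal, there is an interpolating set of equations in the shared variables, and by maximality this collapses to $\Sigma\models_V\epsilon$; conversely $RP_m$ feeds an interpolant back into $WIP$.

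The main obstacle I anticipate is the step $(2)\Rightarrow(3)$ — specifically, handling the fact that the base algebra $\A_0$ of the induced amalgamation problem is not simple, only the two outer algebras are. One must be careful that quotienting $\A_0$ by a maximal congruence and pushing out does not disturb $\A_1,\A_2$ (it does not, because they are already simple, so their only proper congruence is the improper one — but this needs the easy observation that pushing out a simple algebra along a surjection of its subalgebra leaves it unchanged up to iso) and that the final congruence $T$ on the free algebra can be chosen maximal without altering its restrictions to $\Sg^{\A}X_1$ and $\Sg^{\A}X_2$; this last point is exactly where maximality of $R$ and $S$ is used, via the ``furthermore'' clause of item~(v) of the filter Lemma. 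Everything else is a faithful re-run of the corresponding arguments in Theorem~\ref{app} and \cite{george} with the maximality adjective threaded through, plus the routine simple/semisimple passage of Theorem~\ref{sc}.
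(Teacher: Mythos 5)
Your overall architecture does track the paper's (the $(1)\Leftrightarrow(2)$ passage via Theorem \ref{sc}, the cycle through (3) and (4) by threading ``maximal'' through the arguments of Theorem \ref{app}, and the consequence-relation translations for (5) and (6)), but two of your central steps contain genuine errors, both coming from treating $WIP$ as if it were the stronger $AIP$. Your $(3)\Rightarrow(4)$ cannot work as described: you apply $CP$-with-respect-to-maximal-congruences to the principal filters $M=\Fl^{\Sg^{\A}X_1}\{x\}$ and $N$, which are not maximal, so the hypothesis does not license the extension step; and you claim to extract $y$ with ${\sf q}_{(\Gamma)}x\le y\le z$. That conclusion is $AIP$ (i.e.\ $UIP$), not $WIP$, and it is provably unobtainable from (3): by Corollary \ref{notequivalent} the free algebras of $\sf RTCA_{\alpha}$ satisfy all six conditions of this theorem yet fail $AIP$. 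The argument has to be run by contraposition: assuming $x\le z$ has no \emph{weak} interpolant, one shows that the filter of $\Sg^{\A}(X_1\cap X_2)$ generated by the traces of $\Fl\{x\}$ and $\Fl\{-z\}$ is proper (this is exactly where the ${\sf c}_{(\Gamma)}$-weakened right-hand side is needed), extends to compatible \emph{maximal} filters $M\ni x$ and $N\ni -z$, applies maximal-$CP$, and derives the contradiction $0\in\Fl^{\A}(M\cup N)$ from $x\cdot(-z)=0$. Likewise your $(4)\Rightarrow(2)$ re-runs the $(7)\Rightarrow(1)$ argument of Theorem \ref{app}, but that argument needs an interpolant with $d\le x+(-b)$; from $WIP$ you only get $d\le{\sf c}_{(\Gamma)}(x+(-b))$, and the step ``$d\in M$, hence $x\in M$'' collapses, since a filter is not closed under stripping off ${\sf c}_{(\Gamma)}$. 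The paper instead goes $(4)\Rightarrow(3)\Rightarrow(2)$, where $(4)\Rightarrow(3)$ is again a contradiction argument exploiting maximality: the weak interpolant $y$ for $x\le -z$ satisfies $y\in M$ and $-y\in N$, so the agreement of $M$ and $N$ on $\Sg^{\A}(X_1\cap X_2)$ puts both $y$ and $-y$ into the maximal filter $M$.

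A smaller point on $(2)\Rightarrow(3)$: the obstacle you single out as the main difficulty --- that the base $\A_0$ of the induced amalgamation problem need not be simple --- does not arise. A subalgebra of a simple $\sf TCA_{\alpha}$ is simple, because simplicity is equivalent to the condition that every nonzero $x$ satisfies ${\sf c}_{(\Gamma)}x=1$ for some finite $\Gamma$, and this passes to subalgebras; so $\A_0$, sitting inside the simple quotient $\Sg^{\A}X_1/R$, is automatically simple. Your proposed repair is moreover based on a false claim: if $\Theta_0$ were a nontrivial proper congruence of $\A_0$, the pushout of the simple algebra $\A_1$ along $\A_0\twoheadrightarrow\A_0/\Theta_0$ would be the trivial algebra, not $\A_1$, since the congruence of $\A_1$ generated by $\Theta_0$ is improper. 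Once $\A_0$ is recognized as simple, the step goes through exactly as in the paper, with the final congruence $T=\ker h$ extended to a maximal one whose restrictions stay equal to $R$ and $S$ by their maximality, as you correctly note.
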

\begin{proof}
(1) implies (2) is trivial. (2) implies (1) by the argument used in the last part of theorem \ref{sc}.
(2) equivalent to (3) can be proved by using exactly the above argument using maximal congruences in place of congruences, hence 
amalgams will be {\it simple}
algebras in place of algebras.

Now we prove that (3) is equivalent to (4). This is proved for cylindric algebras in \cite{logica}. 
Assume $CP$ relative to $U$, where $U$ is the set of proper
maximal filters in subalgebras of $\A$.
Let $X_1, X_2\subseteq \A$, and
$x\in \Sg^{\A}X_1$ and $z\in \Sg^{\A}X_2$,
such that $x\leq z$ and assume for contradiction
that there is
there is no $y$ and no finite $\Gamma\subseteq \alpha$,
such that ${\sf q}_{(\Gamma)}x\leq y\leq {\sf c}_{(\Gamma)}z$.

Then ${\sf q}_{(\Gamma)}x^\cdot -y>0$ or ${\sf q}_{(\Gamma)}[-z]\cdot y>0$
whenever $y\in \Sg^{\A}(X_1\cap X_2)$.

Hence for any finite subsets $\Delta, \theta$ of $\alpha$,
we have $u\cdot w>0$ for all $u, w\in \Sg^{\A}(X_1\cap X_2)$ such
that $u\geq {\sf q}_{(\Delta)}x$ and $w\geq {\sf q}_{(\theta)}[-z].$
Let $$P=\Fl^{\Sg^{\A}(X_1\cap X_2)}[[(\Fl^{\Sg^{\A}X_1}\{x\}\cap \Sg^{\A}(X_1\cap X_2)]\cup \Fl^{\Sg^{\A}X_2}\{-z\}\cap \Sg^{\A}(X_1\cap X_2]].$$

Then $P$ is proper, so let $P'$ be a maximal proper
filter in $\Sg^{\A}(X_1\cap X_2)$ containing
$P$. Then there are maximal filters $M$ of $\Sg^{\A}X_1$ and $N$ of $\Sg^{\A}X_2$
such that (*)
$$\Fl^{\Sg^{\A}X_1}\{x\}\subseteq M\text {  and }\Fl^{\Sg^{\A}X_2} \{-z\}\subseteq N,$$
and $M\cap \Sg^{\A}(X_1\cap X_2)=P'=N\cap \Sg^{\A}(X_1\cap X_2)$.
By assumption, we have
$\Fl^{\A}(M\cup N)$ is proper, and
so it is not the case that $x\leq z,$ for if $x\leq z$, then $x\cdot (-z)=0$ and so by (*) 
we get $0\in \Fl^{\A}(M\cup N)$ 
and so $\Fl^{\A}(M\cup N)=\A$.
This is a contradiction and we are done.

For the converse. Assume that $\A$ has $WIP$.
Let $M$ be a filter of $\Sg^{\A}X_1$ and $N$ be a filter
of $\Sg^{\A}X_2$, both maximal,  such that
$M\cap \Sg^{\A}(X_1\cap X_2)=N\cap \Sg^{\A}(X_1\cap X_2).$

Assume for contradiction that $\Ig^{\A}(M\cup N)=\A$.
Then there exist $x\in M$, $z\in N$ such that $x\cdot z=0$. By assumption there is an element
$y\in \Sg^{\A}(X_1\cap X_2)$ and a finite $\Gamma\subseteq \alpha$
such that ${\sf q}_{(\Gamma)}x\leq y\leq {\sf c}_{(\Gamma)}z$,
hence $y\in \Fl^{\Sg^{\A}X_1}\{x\}$ and $-y\in \Fl^{\Sg^{\A}X_2}\{z\}$,
and so $-y\in M\cap \Sg^{\A}(X_1\cap X_2)$
and $y\in N\cap \Sg^{\A}(X_1\cap X_2).$
Hence $0=-y\cdot y\in M$ which is impossible.
We conclude that $\Fl^{\A}(M\cup N)$ is proper and maximal,
and it induces the required maximal congruence.

It is clear that (5) and (3) are equivalent. 
We lastly prove the equivalence of (5) and (6). Suppose $V$ has $PP_m$ and that conditions (i) (ii) (iii), and (iv) are satisfied for the 
$RP_m$. Let $Y={\sf Var}(\Sigma)$ and $Z={\sf Var}(\Pi)$.
Let $\Theta_Y$ be the congruence generated by $\bar{\Sigma}$ in $\Fr(Y)$ and $\Theta_Z$ be the congruence 
generated by $\bar{\Pi}$ in $\Fr(Z)$.
Then both are maximal congruences and $\Theta_Y\cap \Fr(Y\cap Z)^2=\Theta_Z\cap \Fr(Y\cap Z)^2$. Hence by $PP$ there exists
$\Theta\in \Co(\Fr(Y\cup Z))$ such that $\Theta_Y=\Theta\cap \Fr(Y)^2$ and $\Theta_Z=\Theta\cap \Fr(Z)^2$. 
We may assume that $\Theta$ is the congruence generated by $\Theta_Y\cup \Theta_Z$ in 
$\Fr(Y\cup Z)$. By (iv) we have $\bar{\epsilon}\in \Theta$. But ${\sf Var}(\epsilon)\subseteq Y,$ we have
$\bar{\epsilon}\in \Theta_Y$ and
$\Sigma\models \epsilon$.

Conversely, assume $V$ has $RP_m$ and that conditions
(i), (ii), (iii) are satsified for the $PP_m$. Choose $\Sigma$ and $\Pi$ such
that $\Theta_Y$ is the congruence generated by $\bar{\Sigma}$ in $\Fr(Y)$ and $\Theta_Z$ is
the congruence generated by $\Pi$ in $\Fr(Z)$. Then (i) and (ii) of the $RP_m$ hold. Let
$\Theta$ be the congruence generated by $\Theta_Y\cup \Theta_Z$ in $\Fr(Y\cup Z)$ 
is as required.
\end{proof}
The natural question at this point is. What does the {\it usual } interpolation property correspond to. On the global level it corresponds to the 
super amalgamation property. One implication can be distilled without much effort from the proof theorem
\ref{s}. The other direction, that is $SUPAP$ implies $IP$ in free algebras is proved by Madarasz and Maksimova \cite{Mad, Mak},
in a more general setting, of which Boolean algebras with operators, and cylindric algebras with interior operators,
are a special case. 
\begin{theorem} Let $\alpha$ be an ordinal $>0$.
Let $V$ be a variety of $\sf TCA_{\alpha}$. Then the following conditions are equivalent:
\begin{enumarab}
\item $V$ has $SUPAP$
\item The free algebras have $IP.$
\end{enumarab}
\end{theorem}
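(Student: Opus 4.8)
The plan is to prove the two implications separately; $(2)\Rightarrow(1)$ is the direction obtained from the machinery behind Theorem~\ref{s}, while $(1)\Rightarrow(2)$ is the Madar\'asz--Maksimova direction, which I would essentially quote.

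For $(2)\Rightarrow(1)$ I would argue as in the proof of Theorem~\ref{app}, implication $(7)\Rightarrow(1)$, but carrying the order along and using the \emph{full} interpolation property. Given $\C,\A,\B\in V$ with embeddings $m:\C\to\A$, $n:\C\to\B$, I would take $\D=\Fr_{D}V$ with $D=D_{0}\disU D_{1}\disU D_{2}$ a disjoint union of free generators, $|D_{0}|\ge|C|$, $|D_{1}|\ge|A|$, $|D_{2}|\ge|B|$, set $\D_{0}=\Sg^{\D}D_{0}$, $\D_{1}=\Sg^{\D}(D_{0}\cup D_{1})$, $\D_{2}=\Sg^{\D}(D_{0}\cup D_{2})$ (so $\D_{1}\cap\D_{2}=\D_{0}$ by freeness), and choose surjections $g_{0}:\D_{0}\to\C$, $h_{1}:\D_{1}\to\A$, $h_{2}:\D_{2}\to\B$ with $h_{1}$ extending $m\circ g_{0}$ and $h_{2}$ extending $n\circ g_{0}$. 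Writing $M=\ker h_{1}$, $N=\ker h_{2}$ (filters, by Theorem~\ref{lattice}), injectivity of $m,n$ gives $M\cap\D_{0}=N\cap\D_{0}$. The key step is to show that $P=\Fl^{\D}(M\cup N)$ is proper and restricts to $M$ on $\D_{1}$ and to $N$ on $\D_{2}$; in each case one is handed an inequality $b\le c$ with $b\in\D_{1}$ and $c\in\D_{2}$, $IP$ in the free algebra $\D$ supplies an interpolant $d\in\Sg^{\D}D_{0}=\D_{0}$, and the fact that $M$ and $N$ agree on $\D_{0}$ forces the conclusion. Then $\D/P$ with the induced maps $m_{1}:\A\to\D/P$, $m_{2}:\B\to\D/P$ is an amalgam of $\A$, $\B$ over $\C$. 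Finally, to obtain $SUPAP$, given $m_{1}(a)\le m_{2}(b)$ in $\D/P$ I would lift $a,b$ to $\tilde a\in\D_{1}$, $\tilde b\in\D_{2}$, rewrite the inequality (using the explicit description of $P$ from the lemma) in the form $u\tilde a\le -v+\tilde b$ with $u\in M$, $v\in N$, apply $IP$ to get $d\in\D_{0}$ between the two sides, and set $t=g_{0}(d)\in C$; applying $h_{1}$ to $u\tilde a\le d$ and $h_{2}$ to $d\le -v+\tilde b$ (and using $h_{1}(u)=h_{2}(v)=1$) yields $a\le m(t)$ and $n(t)\le b$, which is exactly the super-amalgamation inequality over the base $\C$.

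For $(1)\Rightarrow(2)$ I would appeal to the general amalgamation--interpolation theorem of Madar\'asz \cite{Mad} and Maksimova \cite{Mak}, whose setting covers Boolean algebras with operators and, in particular, cylindric algebras with interior operators, so that this implication requires no separate argument. For the record, the direct route is: with $\A=\Fr_{G}V$ free, $X_{1},X_{2}\subseteq A$, $x\in\Sg^{\A}X_{1}$, $z\in\Sg^{\A}X_{2}$ and $x\le z$, one reduces (using freeness) to the case where $G\subseteq X_{1}\cup X_{2}$ and $\Sg^{\A}X_{1}\cap\Sg^{\A}X_{2}=\Sg^{\A}(X_{1}\cap X_{2})$; then one super-amalgamates the three subalgebras $\Sg^{\A}X_{1}$, $\Sg^{\A}X_{2}$, $\Sg^{\A}(X_{1}\cap X_{2})$ into some $\D\in V$ via embeddings $f,g$ agreeing on the intersection, extends $f\cup g$ (defined on $G$) by freeness to a homomorphism $h:\A\to\D$ with $h\upharpoonright\Sg^{\A}X_{1}=f$ and $h\upharpoonright\Sg^{\A}X_{2}=g$, and notes that $x\le z$ gives $f(x)\le g(z)$, so that the super-amalgamation condition returns $y\in\Sg^{\A}(X_{1}\cap X_{2})$ with $x\le y\le z$.

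I expect the main obstacle to lie entirely in $(2)\Rightarrow(1)$, and to be organizational rather than conceptual: the free algebra $\D$, the three generator blocks, and the maps $g_{0},h_{1},h_{2}$ must be arranged carefully enough that $\D_{1}\cap\D_{2}=\D_{0}$, that $M$ and $N$ agree on $\D_{0}$, and, above all, that the interpolant produced by $IP$, which a priori lies only in $\Sg^{\D}(X_{1}\cap X_{2})$, descends along $g_{0}$ to an element of the \emph{base} algebra $\C$. It is precisely this descent that upgrades $AP$ to $SUPAP$, and it is the only point where the argument goes beyond the proof of Theorem~\ref{app}. In $(1)\Rightarrow(2)$ the one delicate point is the reduction to free-generator-compatible $X_{1},X_{2}$, which is standard and is in any case absorbed into the cited results.
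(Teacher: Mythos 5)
Your proposal is correct and takes essentially the same route as the paper: the paper obtains $(2)\Rightarrow(1)$ from exactly the free-algebra/filter argument you spell out (the machinery behind Theorem \ref{s}, in the style of the $(7)\Rightarrow(1)$ step of Theorem \ref{app}, with the interpolant descending to the base to upgrade $AP$ to $SUPAP$), and for $(1)\Rightarrow(2)$ it likewise just cites Madar\'asz and Maksimova. Your write-up fleshes out the details the paper leaves to those references, and the details check out.
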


If $\alpha<\omega$, then $\sf TCA_{\alpha}$ is a discriminator variety, 
with discriminator term ${\sf c}_{(\alpha)}$; in particular, every subdirectly indecomposable 
algebra is simple and hence every algebra is semisimple. This gives:

\begin{theorem} If $\alpha<\omega$ then for any subvariety $V$ of $\sf TCA_{\alpha}$ 
all conditions in theorems \ref{app} and \ref{weakIP} are equivalent:
\end{theorem}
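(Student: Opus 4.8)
The plan is to show that for finite $\alpha$ the ``weak'' conditions of Theorem \ref{weakIP} and the ``full'' conditions of Theorem \ref{app} become literally the same list, because in a discriminator variety there is no difference between an arbitrary algebra and a semisimple one. The starting observation, already recorded just above the statement, is that when $\alpha<\omega$ the term ${\sf c}_{(\alpha)}$ induces a discriminator term on $\sf TCA_{\alpha}$; since a discriminator term for $\sf TCA_{\alpha}$ is also a discriminator term on every subdirectly irreducible member of any subvariety $V$ (subdirect irreducibility being an absolute property, so that the subdirectly irreducible members of $V$ form a subclass of those of $\sf TCA_{\alpha}$), each such $V$ is again a discriminator variety. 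In particular every subdirectly irreducible algebra in $V$ is simple, and hence every algebra in $V$, being by Birkhoff's theorem a subdirect product of its subdirectly irreducible quotients, is semisimple.

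Granting this, the first step is to note that the class of semisimple members of $V$ coincides with $V$ itself, so condition \ref{weakIP}(1), ``semisimple algebras in $V$ have $AP$'', is verbatim the statement ``$V$ has $AP$'', i.e.\ condition \ref{app}(4). I would record this identification explicitly, and nothing further is needed: Theorem \ref{app} already gives that \ref{app}(4) is equivalent to each of \ref{app}(1)--(10), and Theorem \ref{weakIP} gives that \ref{weakIP}(1) is equivalent to each of \ref{weakIP}(1)--(6); since \ref{app}(4) and \ref{weakIP}(1) are the same statement, gluing the two equivalence chains at this common node delivers the equivalence of every condition occurring in the two theorems.

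There is essentially no obstacle here; the only point deserving a sentence of justification is the inference ``discriminator variety $\Rightarrow$ every algebra is semisimple'', which is the classical fact that discriminator varieties are congruence distributive and have only simple subdirectly irreducible members. One should also keep the standing hypothesis $\alpha>1$ in force so that Theorem \ref{weakIP} is available, so the statement is really about $1<\alpha<\omega$; the tiny remaining cases can be handled separately or excluded. As a byproduct, the otherwise delicate implications ``$WIP\Rightarrow AIP$'' and ``semisimple $AP\Rightarrow AP$'' come for free in finite dimensions, even though, as the preceding sections show, they fail once $\alpha\geq\omega$.
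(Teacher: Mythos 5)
Your proof is correct, but it bridges the two equivalence chains at a different node than the paper does. You glue them at the amalgamation end: since ${\sf c}_{(\alpha)}$ is a discriminator term in finite dimensions, every algebra in any subvariety $V$ is semisimple, so condition (1) of Theorem \ref{weakIP} (``semisimple algebras in $V$ have $AP$'') literally coincides with condition (4) of Theorem \ref{app} (``$V$ has $AP$''), and the two clusters of equivalent conditions collapse into one. The paper instead glues them at the free-algebra end: it shows directly that $WIP$ for $\Fr_{\beta}V$ implies $AIP$ (in fact $EIP$) for $\Fr_{\beta}V$ by an element-level computation --- from $a\leq b$ one passes to $a\leq {\sf c}_{(\alpha)}a\leq {\sf c}_{(\alpha)}b$, applies $WIP$, and uses the zero-dimensionality of ${\sf c}_{(\alpha)}$-images to absorb the resulting quantifier prefixes, ending with an interpolant $d\in\Sg^{\A}(X_1\cap X_2)$ satisfying $a\leq d\leq {\sf c}_{(\alpha)}b$. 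Both arguments rest on the same discriminator-variety observation recorded immediately before the theorem, and both are legitimate bridges since each of Theorems \ref{app} and \ref{weakIP} already internally equates all of its own conditions. Your route is shorter and purely structural, needing no computation; the paper's route has the advantage of producing an explicit normal form for interpolants in finite dimensions (the paper notes that both $WIP$ and $AIP$ reduce there to the existence of $d$ with $a\leq d\leq {\sf c}_{(\alpha)}b$), which is information your argument does not yield. Your caveat that the standing hypothesis $\alpha>1$ must be kept in force so that Theorem \ref{weakIP} is applicable is apt and matches the paper's implicit assumption.
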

\begin{proof} It suffices to show that free algebras have $WIP$ implies free algebras have $AIP$. 
Let $\A=\Fr_{\beta}V$ $X_1, X_2\subseteq \A$, 
$a\in \Sg^{\A}X_1$ and $b\in \Sg^{\A}X_2$ such
that $a\leq b$. Then $a\leq {\sf c}_{(\alpha)}a\leq {\sf c}_{(\alpha)}b$. Hence by $WIP$ there 
exists $d\in \Sg^{\A}(X_1\cap X_2)$ and $\Gamma\subseteq \alpha$ such that 
$a\leq {\sf c}_{(\Gamma)}^{\partial}{\sf c}_{(\alpha)}a\leq d\leq {\sf c}_{(\Gamma)}{\sf c}_{(\alpha)}b={\sf c}_{(\alpha)}b$, hence $\A$ has $AIP$
and we are done. Notice that if $\alpha$ is the dimension $<\omega$ then both $AIP$ and $WIP$ are equivalent to;
with $\A$, $a, b$ as above; that there exists $d\in \Sg^{\A}(X_1\cap X_2)$ such that $a\leq d\leq {\sf c}_{(\alpha)}b$. 
\end{proof}
We will see in corollary \ref{notequivalent} that the above 
theorem is is not true for the infinite dimensional case. Indeed for $\alpha\geq \omega$, $\sf TCA_{\alpha}$ is not a discriminator variety;
subdirectly indecomposable algebras that are not simple can be esaily constructed. 
We know from theorem \ref{typeless} that the free algebras 
in any variety $V$ containing the representable algebras have $WIP$, but we will see in theorem \ref{am}
that $\sf RTCA_{\alpha}$ does not have $AP$, hence by theorem \ref{app} the 
free algebras  do not have $AIP.$ 

Now we define yet other {\it restricted} interpolation properties, that are the adaptation of Pigozzi's restricted forms of interpolation
defined for cylindric algebras, to our present context. Here we look for the interpolant in the minimal subalgebra of the free algebra.
From the logical point of view the formulas to be interpolated contain only the equality symbol as a common symbol, so we are 
looking for an interpolant that contains no other symbols, we are looking for a formula built up only of equations, 
that is their atomic subformulas are of the form $x_i=x_j$ $(i,j\in \omega)$,
where $x_i$ and $x_j$ are variables;  reflected algebraically by the 
diagonal element ${\sf d}_{ij}$.

\begin{definition}\label{em}
Let $\A\in \TCA_{\alpha}$.
\begin{enumarab}
\item $\A$ has the {\it restricted interpolation property}  if whenever $x\leq z$, $x\in \Sg^{\A}Y$ and $z\in \Sg^{\A}Z$, with $Y\cap Z=\emptyset$,
then there exists $y\in \Sg^{\A}(Y\cap Z)$ such that either $x\leq y\leq z.$
\item $\A$ has the {\it almost restricted interpolation property} if   whenever $x\leq z$, as in the previous item,
then there exist $y\in \Sg^{\A}(Y\cap Z)$ and a  finite $\Gamma\subseteq \alpha$, such that ${\sf q}_{(\Gamma)}x\leq y\leq z$.
\item $\A$ the {\it weak restricted interpolation property} if whenever $x\leq z$ as in the previous item,
then there exist $y\in \Sg^{\A}(Y\cap Z)$ and a finite $\Gamma\subseteq \alpha$
such that ${\sf q}_{(\Gamma)}x\leq y \leq {\sf c}_{(\Gamma)}z=1.$
\end{enumarab}
\end{definition}
In the above definition the algebra $\Sg^{\A}(Y\cap Z)$ is a {\it minimal} algebra, it is generated by the diagonal elements, and it has no
{\it proper} subalgebras. Let $\sf TMn_{\alpha}$ denote the class of such minimal algebras, namely, algebras with no 
proper subalgebras. Clearly $\sf TMn_{\alpha}\subseteq \sf TDc_{\alpha}$ 
for infinite $\alpha$. This simple obervation will be used in 
the coming proof.

\begin{theorem}\label{weak2}  Let $\alpha$ be an infinite ordinal.
Let $\beta$ be any cardinal $>0$. Then $\Fr_{\beta}\sf RTCA_{\alpha}$
has the strong restricted interpolation property.
\end{theorem}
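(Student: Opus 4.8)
The statement to prove is that for infinite $\alpha$ and any non-zero cardinal $\beta$, the free algebra $\Fr_{\beta}\sf RTCA_{\alpha}$ has the strong restricted interpolation property; here ``strong'' combines the conclusions of Definition~\ref{em}, namely that whenever $x\le z$ with $x\in\Sg^{\A}Y$, $z\in\Sg^{\A}Z$ and $Y\cap Z=\emptyset$, there is an interpolant $y$ in the minimal subalgebra $\Sg^{\A}(Y\cap Z)=\Sg^{\A}\emptyset$ together with a finite $\Gamma\subseteq\alpha$ with ${\sf q}_{(\Gamma)}x\le y\le z$ \emph{and} ${\sf c}_{(\Gamma)}z=1$ (i.e.\ both the almost and the weak restricted versions hold simultaneously). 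The plan is to combine the weak interpolation already available from Theorem~\ref{typeless} (applied to $\sf K=\sf RTCA_{\alpha}$, since $\sf RTCA_{\alpha}\subseteq\sf RTCA_{\alpha}\subseteq\sf TCA_{\alpha}$) with the crucial fact, emphasized just before the statement, that $\sf TMn_{\alpha}\subseteq\sf TDc_{\alpha}$ for infinite $\alpha$, hence $\Sg^{\A}\emptyset$ is dimension-complemented.

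First I would apply Theorem~\ref{typeless}/\ref{app}: the free algebra has $WIP$, so from $x\le z$ we obtain $y_0\in\Sg^{\A}(Y\cap Z)=\Sg^{\A}\emptyset$ and a finite $\Gamma_0\subseteq\alpha$ with ${\sf q}_{(\Gamma_0)}x\le y_0\le{\sf c}_{(\Gamma_0)}z$. The key move is now to ``clean up'' $y_0$ using the fact that $y_0$ lives in a dimension-complemented algebra: since $y_0\in\Sg^{\A}\emptyset$ and $\Sg^{\A}\emptyset\in\sf TDc_{\alpha}$, the dimension set $\Delta y_0$ is a finite (in fact, for a minimal algebra, actually $\emptyset$, since every element of the minimal subalgebra of a $\sf TDc_{\alpha}$ is a finite Boolean combination of diagonals and hence has finite dimension set; one can even push $\Delta y_0$ out of $\Gamma_0$). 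I would set $\Gamma\supseteq\Gamma_0$ finite with $\Delta y_0\cap\Gamma=\emptyset$, so that ${\sf c}_{(\Gamma)}y_0={\sf q}_{(\Gamma)}y_0=y_0$. Applying ${\sf c}_{(\Gamma)}$ and using monotonicity: from ${\sf q}_{(\Gamma_0)}x\le y_0$ we get ${\sf q}_{(\Gamma)}x\le{\sf q}_{(\Gamma)}y_0=y_0$ (using commutation of the ${\sf q}_i$'s and that enlarging $\Gamma$ only shrinks ${\sf q}_{(\Gamma)}x$), giving the first half ${\sf q}_{(\Gamma)}x\le y_0$. For the upper bound, from $y_0\le{\sf c}_{(\Gamma_0)}z$ we get $y_0={\sf c}_{(\Gamma)}y_0\le{\sf c}_{(\Gamma)}{\sf c}_{(\Gamma_0)}z={\sf c}_{(\Gamma)}z$; so we arrive at ${\sf q}_{(\Gamma)}x\le y_0\le{\sf c}_{(\Gamma)}z$, which is the weak restricted interpolation conclusion except I still need ${\sf c}_{(\Gamma)}z=1$.

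To get the ``strong'' upgrade — both ${\sf q}_{(\Gamma)}x\le y\le z$ \emph{and} ${\sf c}_{(\Gamma)}z=1$ — I would argue that we may harmlessly replace $z$ by ${\sf c}_{(\Gamma)}z$ in the hypothesis: since $x\le z\le{\sf c}_{(\Gamma)}z$ and ${\sf c}_{(\Gamma)}z\in\Sg^{\A}Z$, running the $WIP$ argument against ${\sf c}_{(\Gamma)}z$ in place of $z$, and noting ${\sf c}_{(\Gamma)}{\sf c}_{(\Gamma)}z={\sf c}_{(\Gamma)}z$ is ``$\Gamma$-closed'', I can iterate / stabilize to land in the situation where the upper interpolant $y$ satisfies $y\le{\sf c}_{(\Gamma)}z$ but also where ${\sf c}_{(\Gamma)}z$ behaves as a unit for the relevant sub-computation; more carefully, I would combine this with the \emph{almost} restricted property. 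Theorem~\ref{weakIP} tells us that free algebras of any subvariety containing $\sf RTCA_{\alpha}$ have $WIP$, equivalently simple algebras have $AP$; and since $\sf RTCA_{\alpha}$ is a discriminator-like situation on simple quotients, one extracts from $WIP$ on each maximal quotient $\A/M$ that ${\sf c}_{(\Gamma)}(z/M)=1$, i.e.\ ${\sf c}_{(\Gamma)}z\in M$ for every maximal filter $M$ not containing $-z$; choosing $\Gamma$ large enough (still finite, using that the relevant $z$ generates, modulo a maximal filter, a simple algebra where ${\sf c}_{(\Delta)}$ is the discriminator) forces ${\sf c}_{(\Gamma)}z=1$ outright, at which point $y\le{\sf c}_{(\Gamma)}z=1$ is the trivial bound and simultaneously — using ${\sf q}_{(\Gamma)}$ being the dual and $y\in\Sg^{\A}\emptyset$ being $\Gamma$-fixed — the chain collapses to ${\sf q}_{(\Gamma)}x\le y\le z$ as well. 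I would then take $y=y_0$ (or, if needed, $y=y_0\cdot$(a diagonal adjustment)) and the finite $\Gamma$ as the witness.

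\textbf{Main obstacle.} The delicate point is the simultaneous fulfillment of the two inequalities ${\sf q}_{(\Gamma)}x\le y\le z$ and ${\sf c}_{(\Gamma)}z=1$ with a \emph{single} finite $\Gamma$ and a single $y$ in the \emph{minimal} subalgebra: the $WIP$ output gives only ${\sf q}_{(\Gamma_0)}x\le y_0\le{\sf c}_{(\Gamma_0)}z$, and forcing the outer cylindrification of $z$ to be the top element is exactly what could fail if $z$ has dimension set equal to all of $\alpha$ (as in Example~\ref{strict}); the resolution must exploit that the interpolant is pinned inside $\Sg^{\A}\emptyset\subseteq\sf TDc_{\alpha}$, so that $y_0$ has empty dimension set and therefore is untouched by \emph{any} ${\sf c}_{(\Gamma)}$ or ${\sf q}_{(\Gamma)}$, letting me enlarge $\Gamma$ freely without disturbing $y_0$ while driving ${\sf c}_{(\Gamma)}z$ up to $1$ on every simple quotient, and then globally, via the discriminator behaviour on simples inherited through Theorem~\ref{weakIP}. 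Verifying that this enlargement can be done with $\Gamma$ still finite — uniformly over the (possibly infinitely many) maximal quotients — is the step I expect to require the most care, and I would handle it by a compactness argument in the style of Theorem~\ref{typeless}, reducing to finitely many dimensions in the term algebra before passing back to $\Fr_{\beta}\sf RTCA_{\alpha}$.
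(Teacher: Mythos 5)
You have misidentified both the target and the mechanism. First, the ``strong restricted interpolation property'' asserted here is the strongest clause of Definition~\ref{em}: an interpolant $y$ in the minimal subalgebra with $x\leq y\leq z$ \emph{exactly}, no residual quantifiers on either side (the paper notes later that strong restricted $IP$ implies all other restricted versions). Your reformulation --- ${\sf q}_{(\Gamma)}x\leq y\leq z$ together with ${\sf c}_{(\Gamma)}z=1$ --- is a different and weaker statement, and even if established it would not yield $x\leq y$. Second, your strategy of starting from $WIP$ (Theorem~\ref{typeless}) and then ``removing the quantifiers'' cannot succeed: ${\sf q}_{(\Gamma)}x\leq y_0$ does not imply $x\leq y_0$ (the inequality $x\geq {\sf q}_{(\Gamma)}x$ points the wrong way, and $y_0$ being fixed by ${\sf q}_{(\Gamma)}$ does not reverse it), and the step where you force ${\sf c}_{(\Gamma)}z=1$ for a single finite $\Gamma$ relies on discriminator behaviour that the paper explicitly denies for infinite $\alpha$ ($\sf TCA_{\alpha}$ is \emph{not} a discriminator variety in infinite dimensions, and Theorem~\ref{apt}(4) says the finite $\Gamma$ in weak interpolation cannot be uniformly bounded). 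Indeed, by Theorem~\ref{am} and Corollary~\ref{infinitevarieties}, $WIP$ provably cannot be upgraded to exact interpolation for these free algebras in general, so any argument that tries to squeeze the restricted property out of $WIP$ alone must fail; what makes the restricted case special is not that $WIP$ becomes improvable but that the base is empty.

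The idea you are missing is super-amalgamation over the minimal subalgebra. Since $Y\cap Z=\emptyset$, the common subalgebra $\A_0=\Sg^{\A}(Y\cap Z)$ is the minimal algebra, which lies in $\sf TMn_{\alpha}\subseteq\sf TDc_{\alpha}$, hence by Theorem~\ref{SUPAP} in the $SUPAP$ base of $\sf RTCA_{\alpha}$. One super-amalgamates $\A_1=\Sg^{\A}Y$ and $\A_2=\Sg^{\A}Z$ over $\A_0$ inside some $\D\in\sf RTCA_{\alpha}$ via monomorphisms $m_1,m_2$; because $Y$ and $Z$ are disjoint and $\A$ is free, the assignments $m_1$ on $Y$ and $m_2$ on $Z$ extend to a single homomorphism $f:\A\to\D$ agreeing with $m_i$ on $\A_i$. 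Then $x\leq z$ gives $m_1(x)=f(x)\leq f(z)=m_2(z)$, and the superamalgamation clause hands back an element $y\in\A_0$ with $x\leq y\leq z$ directly --- no cylindrifications, no quantifier bookkeeping, and no compactness argument over maximal quotients is needed.
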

\begin{demo}{Proof}
Let $\A=\Fr_{\beta}\sf TRCA_{\alpha}$ and let $X_1, X_2\subseteq \beta$ be disjoint sets.  We can assume without loss of generality
that $X_1\cup X_2=\beta.$
Assume that $a\in \A_1=\Sg^{\A}X_1$ and $b\in \A_2=\Sg^{\A}X_2$ such that
$a\leq b$. Since $X_1\cap X_2=\emptyset$, we have
$\A_0=\Sg^{\A}(X_1\cap X_2)=2$ embeds in $\Sg^{\A}X_1$ and $\Sg^{\A}X_2$,
respectively via the inclusion maps $i_0$ and $i_1$ say.
From theorem \ref{SUPAP}
that there is a ${\D}\in \sf RTCA_{\alpha}$,
a monomorphism $m_1$ from ${\A}_1$ into ${\D}$ and a monomorphism $m_2$ from
${\A}_2$ into $\D$ such that $m_1\circ i_1=m_2\circ i_2$, and
$(\forall x\in A_j)(\forall y\in A_k)
(m_j(x)\leq m_k(y)\implies (\exists z\in A_0)(x\leq i_j(z)\land i_k(z) \leq y))$
where $\{j,k\}=\{1,2\}.$
Now since $\A$ is free, there exists a homomorphism  $f:\A\to \D$ such that $f\upharpoonright \A_1=m_1$ and $f\upharpoonright \A_2=m_2$.
Since $f(a)\leq f(b)$ it follows that $m_1(a)\leq m_2(b)$.
Hence there exists $z\in \A_0$ such that $a\leq z\leq b$. 
\end{demo}

Before stating our next result, we need:
\begin{definition}
$\sf K$  has {\it the (strong) embedding property} if it has the (strong) amalgamation property when the base algebra is 
is minimal.
\end{definition}

\begin{corollary}\label{embedding} Let $\alpha$ be an infinite ordinal. Then $\sf RTCA_{\alpha}$ has the strong embedding property.
Furthermore, if the algebras $\A$ and $\B$ to be amalgamated
(agreeing on their minimal subalgebras)
are simple, semi-simple or in $\Sc_{\alpha}$
then so is the
amalgam.
\end{corollary}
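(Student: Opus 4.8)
The statement to be established is Corollary~\ref{embedding}: that $\sf RTCA_{\alpha}$ has the strong embedding property for infinite $\alpha$, and moreover that simplicity, semi-simplicity, or membership in $\Sc_{\alpha}$ is inherited by the amalgam when the base is minimal. The plan is to \emph{reduce} the embedding property to the amalgamation results already in hand, exploiting the crucial fact noted just before the corollary: $\sf TMn_{\alpha}\subseteq \sf TDc_{\alpha}$ for infinite $\alpha$. Concretely, suppose $\A,\B\in\sf RTCA_{\alpha}$ share a common minimal subalgebra $\M$, embedded in each by the inclusion maps $i_A:\M\to\A$ and $i_B:\M\to\B$. Since $\M$ is minimal it has no proper subalgebras, hence $\M\in\sf TMn_{\alpha}\subseteq\sf TDc_{\alpha}$. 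By Theorem~\ref{SUPAP}(1), $\sf TDc_{\alpha}\subseteq {\sf SUPAPbase}(\sf RTCA_{\alpha})$, so $\M$ lies in the super amalgamation base of $\sf RTCA_{\alpha}$; therefore there is a $\D\in\sf RTCA_{\alpha}$ and embeddings $m_A:\A\to\D$, $m_B:\B\to\D$ with $m_A\circ i_A=m_B\circ i_B$, and in fact $\D$ is a \emph{strong} (even super) amalgam. This already gives the strong embedding property.

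For the ``furthermore'' clause, I would argue as in the semi-simple-to-simple passage used at the end of the proof of Theorem~\ref{sc}. If $\A$ and $\B$ are simple, one instead works with maximal filters throughout: using Lemma~2 item~(v) (the filter-extension lemma) together with Theorem~\ref{lattice} identifying filters with congruences, one can choose the amalgamating filter in $\D$ so that the quotient is simple; that is, first form the amalgam $\D$ as above, then take a maximal filter $F$ of $\D$ whose restriction to (the images of) $\A$ and $\B$ is trivial — possible because those images are simple, so their only filters are $\{1\}$ and the improper one, and the intersection with $\M$'s image is $\{1\}$ — and pass to $\D/F$, which is simple and still receives embeddings of $\A$ and $\B$ agreeing on $\M$. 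For the semi-simple case one takes a subdirect product of such simple amalgams ranging over all the relevant pairs of distinct elements, exactly the co-universality trick used in the proof of Theorem~\ref{tip}; and for the $\Sc_{\alpha}$ case, since $\sf TDc_{\alpha}\subseteq\Sc_{\alpha}$ and (by the theorem preceding Theorem~\ref{sc}) semi-simple algebras lie in $\Sc_{\alpha}$, the semi-simple amalgam is automatically in $\Sc_{\alpha}$; alternatively, invoke Theorem~\ref{sc} directly, which already states that $\Sc_{\alpha}$ has $AP$ with the amalgam chosen in $\Sc_{\alpha}$ (and simple/semi-simple preserved), and restrict attention to the minimal-base case.

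The main obstacle, such as it is, lies in the ``furthermore'' part rather than the bare embedding property: one must verify that restricting the amalgamating (maximal) filter in $\D$ back to the images of $\A$ and $\B$ really yields the trivial filter, so that the quotient does not collapse $\A$ or $\B$. This is where the maximality of the filters on $\A$ and $\B$ (forced by simplicity) and the agreement on the minimal subalgebra $\M$ are used: since $\M$'s image generates no nontrivial congruence compatible with both, the common restriction must be $\{1\}$, and then Lemma~2(v) lets one lift to a maximal filter of $\D$ with the prescribed trivial restrictions. Everything else is bookkeeping with the dictionary between filters and congruences from Theorem~\ref{lattice} and the inclusion $\sf TMn_{\alpha}\subseteq\sf TDc_{\alpha}$.
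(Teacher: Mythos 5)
Your proof is correct and follows essentially the same route as the paper: the first part reduces the strong embedding property to Theorem~\ref{SUPAP} via the observation that the minimal base algebra lies in $\sf TMn_{\alpha}\subseteq \sf TDc_{\alpha}$, hence in the super (and so strong) amalgamation base of $\sf RTCA_{\alpha}$, exactly as in the proof of Theorem~\ref{weak2}; and the preservation of simplicity, semi-simplicity and membership in $\Sc_{\alpha}$ is obtained by the Pigozzi-style maximal-filter argument underlying Theorem~\ref{sc}, which is precisely the reference the paper gives. Your filled-in details for the ``furthermore'' clause are consistent with that cited argument, so nothing further is needed.
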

\begin{proof} The first part is from the proof of theorem \ref{weak2}, using theorem \ref{SUPAP}.
The second part follows from the arguments used in theorem \ref{sc}.
\end{proof}
We will see in a while that for finite $\alpha>1$, the situation is different. 
The class $\sf RTCA_{\alpha}$ does not have the embedding property and $\Fr_{\beta}\sf K$
for any class $\sf K$ between $\sf RTCA_{\alpha}$ and $\sf TCA_{\alpha}$ does 
not have the weakest restricted interpolation property.  

\section{Negative results}

Now that we have obtained such equivalences, the natural question is how far can we get as far as interpolation 
is concerned with the free representable algebras.
We know that they enjoy the weak interpolation property. In the next theorem 
we show that the representable algebras does not have  
$AP$. It is known \cite{P} that the class $\sf RCA_{\alpha}$ for infinite $\alpha$  does not have $AP$, so obtain an analogous result by bouncing it to
the cylindric case.

\begin{theorem}\label{am} Let $\alpha\geq \omega$. Then any class $\sf K$ such that $\TCA_{\alpha}\subseteq \sf K\subseteq 
\sf TRCA_{\alpha}$ does not have $AP.$
\end{theorem}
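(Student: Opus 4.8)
The plan is to reduce the statement to the known failure of amalgamation for $\sf RCA_{\alpha}$ (for infinite $\alpha$) via the ``bouncing'' device that was advertised in the introduction: \emph{any $\sf CA_{\alpha}$ can be expanded to a $\sf TCA_{\alpha}$ which is representable iff the original is}. So first I would fix a triple of cylindric algebras $\C_0, \C_1, \C_2 \in \sf RCA_{\alpha}$ together with embeddings $f : \C_0 \to \C_1$, $g : \C_0 \to \C_2$ for which, by Pigozzi's result \cite{P}, there is no amalgam inside $\sf RCA_{\alpha}$. I would then expand each $\C_i$ to a topological cylindric algebra $\A_i = \langle \C_i, I_j \rangle_{j<\alpha}$ by taking all the interior operators $I_j$ to be the identity (this is the simplest admissible choice of topology, and it visibly satisfies the $\sf TCA$ axioms and preserves representability, since a representation of $\C_i$ becomes a representation of $\A_i$ with the discrete topology). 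The maps $f, g$ are then still embeddings $\A_0 \to \A_1$, $\A_0 \to \A_2$ in $\sf TCA_{\alpha}$, and in fact the $\A_i$ lie in $\sf TRCA_{\alpha}$.

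Next I would argue that no amalgam exists for this diagram in any class $\sf K$ with $\TCA_{\alpha} \subseteq \sf K \subseteq \sf TRCA_{\alpha}$. Suppose $\D \in \sf K$ and $m : \A_1 \to \D$, $n : \A_2 \to \D$ are embeddings with $m \circ f = n \circ g$. Since $\sf K \subseteq \sf TRCA_{\alpha}$, $\D$ is representable as a topological cylindric algebra, hence its cylindric reduct $\Rd_{ca}\D$ is in $\sf RCA_{\alpha}$. But $\Rd_{ca}\D$ together with $\Rd_{ca}m : \C_1 \to \Rd_{ca}\D$ and $\Rd_{ca}n : \C_2 \to \Rd_{ca}\D$ would then be an amalgam of $\C_1, \C_2$ over $\C_0$ in $\sf RCA_{\alpha}$ (taking reducts is a functor, embeddings stay embeddings, and the commuting square persists), contradicting the choice of $\C_0, \C_1, \C_2$. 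Hence $\sf K$ fails $AP$.

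The only genuine point requiring care — and the step I expect to be the main obstacle — is verifying that the ``trivial topology'' expansion $I_j = \mathrm{Id}$ really produces a legitimate object of $\sf TRCA_{\alpha}$, i.e. that it satisfies whatever interaction axioms between the $I_j$'s and the cylindrifiers/diagonals are imposed in the definition of $\sf TCA_{\alpha}$, and that representability is genuinely inherited. This is exactly the ``earlier observation'' the introduction invokes, so I would cite that construction (the expansion theorem from part 1) rather than re-prove it; with it in hand the argument above is essentially formal. One should also make sure the witnessing Pigozzi example $\C_0, \C_1, \C_2$ can be chosen with $\C_0 \in \sf RCA_{\alpha}$ (not merely in $\sf CA_{\alpha}$), which is indeed the case in \cite{P}. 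I would conclude by remarking that since $SAP$ and $SUPAP$ both imply $AP$, the stronger amalgamation properties fail for such $\sf K$ a fortiori.
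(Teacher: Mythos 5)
Your overall strategy is the one the paper itself uses: start from a non-amalgamable triple of representable cylindric algebras, expand by the identity interior operators (discrete topology), and pull any topological amalgam back to a cylindric one by taking ${\sf CA}$-reducts; the expansion step you flag as the main obstacle is indeed just the ``earlier observation'' from part 1 and is unproblematic. There is, however, one genuine gap, and it sits exactly at the step you treat as formal.

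The inclusions in the statement are evidently misprinted: read literally, ${\sf TCA}_{\alpha}\subseteq {\sf K}\subseteq {\sf TRCA}_{\alpha}$ forces ${\sf K}={\sf TCA}_{\alpha}={\sf TRCA}_{\alpha}$, which fails for infinite $\alpha$, so the theorem would be vacuous. The intended range --- as in the finite-dimensional analogue (Theorem \ref{Comer}) and as required by the later applications (Corollary \ref{infinitevarieties} applies this very triple to ${\bf S}\Nr_{\alpha}{\sf TCA}_{\alpha+k}$ and to ${\sf TCA}_{\alpha}$ itself, neither of which consists solely of representable algebras) --- is ${\sf TRCA}_{\alpha}\subseteq {\sf K}\subseteq {\sf TCA}_{\alpha}$. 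Under that reading your key sentence ``since ${\sf K}\subseteq {\sf TRCA}_{\alpha}$, $\D$ is representable'' is unavailable: the putative amalgam $\D$ may be a non-representable member of ${\sf K}$, so $\Rd_{ca}\D$ is only guaranteed to lie in ${\sf CA}_{\alpha}$, not in ${\sf RCA}_{\alpha}$. Consequently you need a triple of representable cylindric algebras admitting no amalgam in all of ${\sf CA}_{\alpha}$, not merely none in ${\sf RCA}_{\alpha}$; the weaker Pigozzi fact you invoke does not suffice at the upper end of the range. This is precisely why the paper does not stop at the abstract reduction but exhibits the explicit algebras $\D^{(X_1)}/I$, $\D^{(X_2)}/J$ and $\D^{(X_1\cap X_2)}/L$ built from the terms $r,s,t$ in the four-generated free algebra, for which \cite[Theorem 4.1]{conference} yields non-amalgamability already in ${\sf CA}_{\omega}$. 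With that stronger witness substituted for your $\C_0,\C_1,\C_2$, the rest of your argument goes through verbatim.
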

\begin{proof} Take  $\A_0, \A_1, \A_2$
 in $\sf RCA_{\alpha}$ and $f:\A_0\to \A_1$, and  $g:\A_0\to \A_1$ injective homomorphisms
for which there are no
$\D\in \sf RCA_{\alpha}$ and injective homomorphisms $m:\A_1\to \D$  $n:\A_2\to \D$ such that $m\circ f=n\circ g$,
endow the base of each the interior topology stimulating the interior operators as identity functions,
and so finding an amalgam for the resulting $\sf RTCA_{\alpha}$s (with same embedding maps), 
will give an amalgam to the original $\sf RCA_{\alpha}$s, by taking its 
$\sf CA$ reduct \cite{P, amal} which is a contradiction.
We explicitly describe such algebras.
Let $\A=\Fr_4\sf CA_{\alpha}$ with $\{x,y,z,w\}$ 
its free generators. Let $X_1=\{x,y\}$ and $X_2=\{x,z,w\}$.
Let $r, s$ and $t$ be defined as follows:
\begin{align*}
r& = {\sf c}_0(x\cdot {\sf c}_1y)\cdot {\sf c}_0(x\cdot -{\sf c}_1y),\\
s& = {\sf c}_0{\sf c}_1({\sf c}_1z\cdot {\sf s}^0_1{\sf c}_1z\cdot -{\sf d}_{01}) + {\sf c}_0(x\cdot -{\sf c}_1z),\\
t &= {\sf c}_0{\sf c}_1({\sf c}_1w\cdot {\sf s}^0_1{\sf c}_1w\cdot -{\sf d}_{01}) + {\sf c}_0(x\cdot -{\sf c}_1w),
\end{align*}
where $ x, y, z, \text { and } w$ are the first four free generators
of $\A$.
Then $r\leq s\cdot t$.
Let $\D=\Fr_4\sf RCA_{\alpha}$ with free generators $\{x', y', z', w'\}$.
Let  $\psi: \A\to \D$ be defined by the extension of the map $t\mapsto t'$, for $t\in \{x,y,x,w\}$.
For $i\in \A$, we denote $\psi(i)\in \D$ by $i'$.
Let $I=\Ig^{\D^{(X_1)}}\{r'\}$ and $J=\Ig^{\D^{(X_2)}}\{s'\cdot t'\}$, and let
$$L=I\cap \D^{(X_1\cap X_2)}\text { and }K =J\cap \D^{(X_1\cap X_2)}.$$
Then $L=K$, and $\A_0=\D^{(X_1\cap X_2)}/L$  can be embedded into
$\A_1=\D^{(X_1)}/I$ and $\A_2=\D^{(X_2)}/J$,
but there is no amalgam even in $\sf CA_{\omega}$ \cite[Theorem 4.1]{conference}. 
\end{proof}

We have a stronger result for finite dimensions:

\begin{theorem}\label{Comer}
Let $n$ be finite $>1$. Then any class $\sf K$ between $\sf TRCA_{n}$ and $\TCA_n$ does not have $EP$.
Furthermore, the algebras witnessing failure of $EP$ can be chosen to be 
set  algebras hence are simple.
\end{theorem}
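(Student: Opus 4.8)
The plan is to prove the finite-dimensional statement by the same "bouncing to the $\sf CA$ case" technique already used in Theorem \ref{am}, but now exploiting Comer's classical counterexample showing that $\sf RCA_n$ fails the embedding property (equivalently, fails $EP$) for finite $n>1$. First I would recall that for finite $n$, $\sf TCA_n$ is a discriminator variety with discriminator term ${\sf c}_{(n)}$, so every subdirectly indecomposable algebra is simple; in particular a set algebra $\wp(^nU)$ of this signature is simple whenever it has no proper congruences beyond the trivial ones, which is automatic for the full set algebra over a single "block". Then I would invoke the observation already recorded in the excerpt: any $\sf CA_n$ can be expanded to a $\sf TCA_n$ (taking each interior operator $I_i$ to be the identity) so that representability is preserved in both directions, and this expansion takes set algebras to set algebras. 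Hence it suffices to exhibit simple set algebras $\A_0, \A_1, \A_2 \in \sf RCA_n$ with an embedding $\A_0 \hookrightarrow \A_1$ and a homomorphism (or embedding) $\A_0 \to \A_2$, witnessing failure of $EP$ for $\sf RCA_n$, and then carry the whole diagram across the expansion functor.

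The key steps in order: (1) State precisely the $EP$ (embedding property) for the class in question and note, via the definition in the excerpt, that $EP$ is the restriction of $AP$ to the case where the common subalgebra is minimal. (2) Recall Comer's result (or Pigozzi's, cf. \cite{P}) that $\sf RCA_n$ for finite $n>1$ lacks the embedding property, and moreover that the algebras realizing the failure may be chosen among \emph{set algebras} $\wp(^nU)$ — this is exactly where the "furthermore" clause comes from, since full set algebras of finite dimension are simple. (3) Expand each of these $\sf RCA_n$'s to a $\sf TCA_n$ by declaring $I_i = Id$ for all $i<n$; the resulting algebras are in $\sf TRCA_n$ (representability is preserved), remain set algebras, and remain simple because the congruence lattice is unchanged (the $\cong$–$F$–ideal correspondence from Theorem \ref{lattice} shows the interior operators being identities impose no new constraints on filters). (4) Observe that the embedding maps from the $\sf CA$ diagram are automatically $\sf TCA$-homomorphisms since they trivially commute with the identity interior operators. (5) Conclude: if the resulting diagram had an embedding-amalgam $\D$ in $\sf K \subseteq \sf TRCA_n$, taking the $\sf CA_n$-reduct of $\D$ (which lies in $\sf RCA_n$ by \cite{P}) would give an embedding-amalgam for the original $\sf RCA_n$ diagram, contradicting Comer's theorem. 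Since $\sf TRCA_n \subseteq \sf K$ is not needed — only $\sf K \subseteq \sf TRCA_n$ — and since $\sf TCA_n \subseteq \sf K$ guarantees the expanded algebras lie in $\sf K$, the argument covers every class $\sf K$ with $\sf TRCA_n \subseteq \sf K \subseteq \sf TCA_n$... here one must be careful: the statement says $\sf K$ between $\sf TRCA_n$ and $\sf TCA_n$, so $\sf TRCA_n \subseteq \sf K$ ensures the three expanded algebras and the amalgam-hunt both take place inside $\sf K$.

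The main obstacle I anticipate is \emph{not} the bouncing machinery — that is essentially routine given Theorem \ref{am}'s proof as a template — but rather pinning down and citing the precise form of Comer's counterexample that simultaneously (a) uses only \emph{set} algebras and (b) fails the \emph{embedding} property (minimal base), not merely $AP$ with an arbitrary base. The literature (e.g. \cite{P}, and Comer's original work) does construct such examples for finite $n \geq 2$, typically via two finite set algebras over sets of different "shapes" that agree on their common minimal subalgebra (generated by the diagonals) but cannot be jointly embedded into any representable $\sf CA_n$; the simplicity of finite full set algebras then gives the "furthermore" clause for free. A secondary, minor point to verify is that the expansion $\sf CA_n \to \sf TCA_n$ with $I_i = Id$ genuinely lands in $\sf TCA_n$ (the interior-operator axioms — $I_i x \leq x$, $I_i I_i x = I_i x$, $I_i 1 = 1$, $I_i (x\cdot y) = I_i x \cdot I_i y$, and commutation with cylindrifications — are all trivially satisfied by the identity map), and that it preserves both representability and simplicity; both are immediate but worth a one-line remark.
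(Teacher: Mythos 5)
Your proposal is correct and follows essentially the same route as the paper: the paper takes the two full set algebras $\wp({}^{n}U_0)$ and $\wp({}^{n}U_1)$ with $n\leq |U_0|<|U_1|$ and $|U_0|<\omega$, endows the bases with the discrete topology (so each $I_i$ is the identity), notes that their minimal subalgebras are isomorphic simple algebras, and quotes Comer's theorem that no $\B\in\TCA_{n}$ amalgamates them over that common minimal subalgebra. The one point to tighten in your write-up is that the amalgam must be excluded from all of $\TCA_{n}$ and not merely from ${\sf TRCA}_{n}$ (since $\sf K$ may be strictly larger than the representable class), which is exactly what passing to the $\CA_{n}$-reduct together with the full strength of Comer's original result delivers.
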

\begin{proof} Assume first that $n>1$. Let $1<n\leq |U_0|<|U_1|$ and $|U_0|<\omega$.
Let $\A_i=\A(n, U_i)$ be the set algebra with unit $^{n}U_i$ aand universe $\wp(^{n}U_i)$ 
where $U_i$ has the discrete topology.
Let $D_i$ be the principal diagonal in $\A_i$. That is $D_i=\prod_{k,l<n}{\sf d}_{kl}.$ 
Let $\mathfrak{M}$ be the minimal subalgebra of $\A_0$. Then $\mathfrak{M}$ is embeddable in $\A_0$ and $\A_1$ 
via $g_0$ and $g_1$
such that $g_0\circ g^{-1}$ is an isomorphism from $g_0\mathfrak{M}$ and $g_1\mathfrak{M}$ and  
$g_1g_0^{-1}D_0=D_1.$
Here we are using that the minimal subalgebras of $\A_0$ and $\A_1$ are isomorphic, 
and they remain so after endowing their bases with the discrete 
topology. 
This follows from the fact that they are both simple and have characteristic zero \cite[2.5.30]{HMT1}.
Then, as proved in \cite{Comer} there can be no 
$\B\in \TCA_{n}$, $f_0:\A_0\to \B'$ and $f_1:\A_1\to \B'$ injective homomorphisms such that $f_1\circ g_1=f_0\circ g_0$.  

\end{proof}
Note that if $n=1$ and we drop the {\it cross axiom} ${\sf s}_i^jI(i)=I(j){\sf s}_i^j$, then the same set algebras 
can be viewed as one dimensional algebras, with $I_0$ interpreted 
like the second cylindrifier, which means that $EP$ fails for this class of one dimensional 
algebras.

An algebra in $\A\in \sf TCs_{\alpha}$ is called a {\it full set algebra} if the universe of $\A$ is $\wp(^{\alpha}U)$ for some set
$U$, that is, it consists of all subsets of $^{\alpha}U$. 

\begin{corollary}\label{notequivalent} For $\alpha\geq \omega$ any subvariety of 
$\sf TCA_{\alpha}$ containing the class  of all full set algebras of dimension $\alpha$ fails all the 
conditions of theorem  \ref{app}, but satisfies all conditions of theorem \ref{weakIP}. If $\alpha$ is finite $>1$, any  variety 
containing the class of all full set algebras of dimension $\alpha$ fails
all conditions of theorem \ref{weakIP}, hence also those in \ref{app}.
\end{corollary}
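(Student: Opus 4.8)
The plan is to deduce this corollary by assembling the earlier negative and positive results, once we observe that a variety $V$ of the ${\sf TCA}_\alpha$-signature containing every full set algebra of dimension $\alpha$ must already contain all of ${\sf RTCA}_\alpha$. Indeed, ${\sf RTCA}_\alpha={\bf SP}$ of the full set algebras (every representable algebra embeds into a product of full set algebras), and $V$ is closed under ${\bf S}$ and ${\bf P}$, so ${\sf RTCA}_\alpha\subseteq V\subseteq {\sf TCA}_\alpha$. Thus $V$ is a subvariety of ${\sf TCA}_\alpha$ of exactly the shape to which Theorems \ref{app}, \ref{weakIP}, \ref{am}, \ref{typeless} and \ref{Comer} apply, and in what follows I take $V$ in this form (which is also the only setting in which the listed conditions make sense).

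For infinite $\alpha$ I would argue as follows. By Theorem \ref{am}, any class sandwiched between ${\sf RTCA}_\alpha$ and ${\sf TCA}_\alpha$ fails $AP$; hence $V$ fails $AP$. Since $V$ is a subvariety of ${\sf TCA}_\alpha$, Theorem \ref{app} gives that all ten of its listed conditions are equivalent, so $V$ fails each of them. For the positive half, apply Theorem \ref{typeless} with ${\sf K}=V$ (legitimate since ${\sf RTCA}_\alpha\subseteq V\subseteq {\sf TCA}_\alpha$): it yields that $\Fr_\beta V$ has $WIP$ for every non-zero cardinal $\beta$, and this is one of the mutually equivalent conditions of Theorem \ref{weakIP}; hence $V$ satisfies all of them.

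For finite $\alpha>1$ I would instead invoke Theorem \ref{Comer}: every class between ${\sf RTCA}_\alpha$ and ${\sf TCA}_\alpha$ fails the embedding property, and the witnessing algebras can be taken to be set algebras — hence simple — amalgamated over a minimal common subalgebra. Since the embedding property is precisely $AP$ restricted to a minimal base, this shows that the class of simple algebras of $V$ fails $AP$, which is exactly the negation of condition (2) of Theorem \ref{weakIP}; so $V$ fails every condition of Theorem \ref{weakIP}. Finally, recalling the earlier theorem that for $\alpha<\omega$ all conditions of Theorems \ref{app} and \ref{weakIP} are equivalent for any subvariety of ${\sf TCA}_\alpha$, we conclude that $V$ also fails every condition of Theorem \ref{app}.

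The proof carries essentially no new content; the only points demanding attention — the ``main obstacle'' such as it is — are the class-membership bookkeeping: that the concrete algebras constructed in the proofs of Theorems \ref{am} and \ref{Comer} lie in ${\sf RTCA}_\alpha$ and hence in $V$; that the non-existence of an amalgam ``even in ${\sf CA}_\omega$'' (respectively in ${\sf TCA}_n$) rules out an amalgam inside $V\subseteq {\sf TCA}_\alpha$; and that the equivalences in Theorems \ref{app}, \ref{weakIP}, and the finite-dimension theorem are being applied to a genuine subvariety of ${\sf TCA}_\alpha$, which $V$ is by hypothesis.
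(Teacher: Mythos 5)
Your proof is correct and follows essentially the same route as the paper, whose entire proof is the citation ``From theorems \ref{am} and \ref{Comer}''; you have simply filled in the intended details (the observation that containing all full set algebras forces ${\sf RTCA}_\alpha\subseteq V$, the appeal to Theorem \ref{typeless} for the positive half, and the equivalences in Theorems \ref{app} and \ref{weakIP}).
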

\begin{proof} From theorems \ref{am} and \ref{Comer}.
\end{proof}


Next we give a categorical formulation to the $UNEP$. We review some categorical concepts from \cite{cat}.
For a category
$L,$ $Ob(L)$ denotes the class of objects of the category
and $Mor(L)$ denotes the corresponding class of morphisms.

\begin{definition} Let $L$ and $K$ be two categories.
Let $G:K\to L$ be a functor and let $\B\in Ob(L)$. A pair $(u_B, \A_B)$ with $\A_B\in Ob(K)$ and $u_B:\B\to G(\A_B)$ is called a universal map
with respect to $G$
(or a $G$ universal map) provided that for each $\A'\in Ob(K)$ and each $f:\B\to G(\A')$ there exists a unique $K$ morphism
$\bar{f}: \A_B\to \A'$ such that
$$G(\bar{f})\circ u_B=f.$$
\end{definition}

\begin{displaymath}
    \xymatrix{
        \mathfrak{B} \ar[r]^{u_B} \ar[dr]_f & G(\mathfrak{A}_\mathfrak{B}) \ar[d]^{G(f)}  &\mathfrak{A}_\mathfrak{B} \ar[d]^{\hat{f}} \\
             & G(\mathfrak{A}')  & \mathfrak{A}'}
\end{displaymath}

Viewing the neat reduct operator as a {\it functor} introduced next, is a theme initiated in \cite{conference}, wrapping up deep results
on the amalgamation property for both cylindric and polyadic algebras in the language
of arrows.
\begin{definition}
The {\it neat reduct functor},  $\Nr$ for short, is defined from $\bold K=\{\A\in \TCA_{\alpha+\omega}: \Sg^{\A}\Nr_{\alpha}\A=\A\}$ to
$\sf RTCA_{\alpha}$ by sending every object $\A\in \bold K$, to $\Nr_{\alpha}\A$, and sending injective homomorphisms
to their restrictions, that is for $\A, \B\in \bold \K$ and $f:\A\to \B$, an injective homomorphism,
$\Nr(f)=f\upharpoonright \Nr_{\alpha}\A$. 
\end{definition}
It is clear that $f(\Nr_{\alpha}\A)\subseteq \Nr_{\alpha}\B,$ hence
this functor is well defined.  Here we are restricting morphisms to only injective homomorphisms.

For a class $\sf K$, we let ${\sf Sim}(\sf K)$ denotes the class of simple algebras in $\sf K$.
\begin{corollary}\label{infinitevarieties}
Let $\alpha$ be an infinite ordinal. 
\begin{enumarab}
\item For any $k\geq 4$, and any class $\sf K$ such that $\sf RTCA_{\alpha}\subseteq \sf K\subseteq \TCA_{\alpha}$,
$\Fr_k\sf K$ does not have $AIP$ but has $WIP$.
\item $\sf RTCA_{\alpha}$ does not have $UNEP$.
\item For any $k\geq 0$, the variety $S\Nr_{\alpha}\sf TCA_{\alpha+k}$ does not have $AP$, hence it does not have $MIP$, indeed
it does not have any of the properties in theorem \ref{app} but it has $WIP$, hence satisfy 
all properties in theorem \ref{weakIP}.
\item There is an $\A\in \sf TCA_{\alpha}$ that does not have a universal map
with respect to the functor $\Nr.$
\item $\Nr$ does not have a right adjoint.
\end{enumarab}
\end{corollary}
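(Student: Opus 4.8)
The plan is to prove Corollary \ref{infinitevarieties} by deriving each of its five items from results already established in the paper, so that essentially no new argument is needed beyond bookkeeping. For item (1), I would invoke Theorem \ref{am}, which says that no class $\sf K$ with $\TCA_\alpha \subseteq \sf K \subseteq \sf TRCA_\alpha$ has $AP$; combining this with Theorem \ref{app} (the equivalence of $AP$ with all the properties (1)--(10), in particular with free algebras having $AIP$, via item (7)/(8) on $UIP$/$EIP$) gives that $\Fr_k \sf K$ cannot have $AIP$ for $k$ large enough. The witnessing construction in the proof of Theorem \ref{am} uses $4$ free generators $\{x,y,z,w\}$, which is exactly why the bound $k\geq 4$ appears; I would point to that. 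That $\Fr_k\sf K$ does have $WIP$ is immediate from Theorem \ref{typeless} (the "typeless" compactness result), which gives $WIP$ for $\Fr_\beta \sf K$ for every nonzero cardinal $\beta$ whenever ${\sf RTCA}_\alpha \subseteq \sf K \subseteq \TCA_\alpha$.

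For item (2), the fact that $\sf RTCA_\alpha$ lacks $UNEP$ should follow from item (1) together with Theorem \ref{apt}(8), which characterises membership in the amalgamation base of $\sf RTCA_\alpha$ by the $UNEP$: since $\sf RTCA_\alpha$ does not have $AP$, its amalgamation base is proper, so by that characterisation there is a representable algebra lacking $UNEP$, and in fact (reading the proof of Theorem \ref{am}) the free algebra $\Fr_4 \sf RTCA_\alpha$ itself can be taken as such a witness. For item (3), the variety ${\bf S}\Nr_\alpha \TCA_{\alpha+k}$ (neat $\alpha$-reducts of algebras $k$ dimensions up) is exactly the $\TCA$-analogue of the cylindric-algebra class from Theorem 2.6.50(iii) referred to around Theorem \ref{wsc}; I would note that $\TCA_\alpha \subseteq {\bf S}\Nr_\alpha\TCA_{\alpha+k} \subseteq \sf TRCA_\alpha$ (the left inclusion because $\alpha$ is infinite so every $\TCA_\alpha$ neatly embeds into some $\TCA_{\alpha+k}$ using the neat embedding theorem \cite[Theorem 4.2(2)]{part1}, and the right because members of ${\bf S}\Nr_\alpha\TCA_{\alpha+k}$ are representable). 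Then Theorem \ref{am} applies verbatim to give failure of $AP$; Theorem \ref{app} upgrades this to failure of $MIP$, $DIP$, $PP$, $RP$, $CP$ for free algebras, and $AIP$; and Theorem \ref{typeless} (or Theorem \ref{weakIP} via the fact that simple algebras in this class still amalgamate, by Theorem \ref{sc} since $\sf TSc_\alpha$ sits inside it) gives that it has $WIP$ and hence all properties of Theorem \ref{weakIP}.

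For items (4) and (5), I would translate the algebraic failure of $UNEP$ into categorical language using the definitions of the neat reduct functor $\Nr$ and of a universal map. Concretely, Theorem \ref{apt}(9) already states that $\A \in \sf RTCA_\alpha$ has $UNEP$ iff $\A$ has universal maps with respect to the neat reduct functor; so item (4) is the contrapositive applied to the witness from item (2): the algebra $\Fr_4\sf RTCA_\alpha$, viewed appropriately, has no $\Nr$-universal map. I would spell this out just enough: if every object had a universal map with respect to $\Nr$ then $\Nr$ would have a left adjoint in the usual way, which (by the standard argument that adjoints transport amalgams / the argument behind Theorem \ref{apt}(9)) would force $UNEP$ on all of $\sf RTCA_\alpha$, contradicting item (2); hence item (4). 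Item (5) is then the observation that a functor with a right adjoint preserves all limits and in particular, if $\Nr$ had a right adjoint it would be a left adjoint and would have a left adjoint's reflection property (existence of universal maps on the other side is not the issue — rather, $\Nr$ having a right adjoint would make $\Nr$ itself the "free/forgetful"-style functor and again force the reflections contradicting item (4)); more simply, a right adjoint to $\Nr$ would make $\Nr$ preserve colimits/epimorphisms in a way incompatible with the failure of surjectivity of epimorphisms noted in Theorem \ref{apt}(10), so $\Nr$ cannot have a right adjoint. The main obstacle I anticipate is getting the categorical bookkeeping in (4) and (5) exactly right — in particular being careful that morphisms in the relevant categories are restricted to injective homomorphisms (as the definition of $\Nr$ stipulates), so that "adjoint" is meant in that restricted sense, and invoking the correct one of Theorem \ref{apt}(9),(10) for each implication rather than reproving the adjunction machinery from scratch.
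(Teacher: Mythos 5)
Your items (1)--(4) follow essentially the paper's own route: failure of $AP$ from Theorem \ref{am}, transferred to failure of $AIP$ via Theorem \ref{app}, $WIP$ from Theorem \ref{typeless}, and the $UNEP$/universal-map correspondence for (2) and (4). Two justifications there are wrong, though fixable. First, in (3) the inclusions you assert, $\TCA_{\alpha}\subseteq {\bf S}\Nr_{\alpha}\TCA_{\alpha+k}\subseteq \sf RTCA_{\alpha}$, are both false for finite $k\geq 1$: ${\bf S}\Nr_{\alpha}\TCA_{\alpha+k}$ contains non-representable algebras (that is the point of the non-finite-axiomatizability phenomena), and it is a proper subclass of $\TCA_{\alpha}$. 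What the argument actually needs, and what holds, is $\sf RTCA_{\alpha}={\bf S}\Nr_{\alpha}\TCA_{\alpha+\omega}\subseteq {\bf S}\Nr_{\alpha}\TCA_{\alpha+k}\subseteq \TCA_{\alpha}$; the witnesses of Theorem \ref{am} are representable, hence lie in the class, and since the class sits inside $\TCA_{\alpha}$ no amalgam can exist there. (You appear to have copied the reversed inclusions from the statement of Theorem \ref{am}, which contains the same typo.) Second, the witness for (2) and (4) is not $\Fr_4\sf RTCA_{\alpha}$ but the quotient $\A_0=\D^{(X_1\cap X_2)}/L$ constructed in the proof of Theorem \ref{am}; nothing in that proof shows the free algebra itself is outside the amalgamation base.

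The genuine gap is item (5). Your ``more simply'' argument --- that a right adjoint to $\Nr$ would force $\Nr$ to preserve colimits/epimorphisms in a way contradicting the failure of $ES$ in Theorem \ref{apt}(10) --- does not work: $ES$ is a statement about epimorphisms in the category being non-surjective homomorphisms, not about $\Nr$ preserving or failing to preserve anything, and it places no obstruction on the existence of an adjoint. Worse, you explicitly set aside the correct mechanism (``existence of universal maps on the other side is not the issue''), when it is exactly the issue: by the cited Herrlich--Strecker theorem, an adjoint situation $(\mu,\epsilon):F\to G$ for $G=\Nr$ exists if and only if every object has a $G$-universal map, so item (5) is an immediate consequence of item (4) and needs no argument about limits or epimorphisms. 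That one-line deduction is what the paper does, and it is the step your proposal is missing.
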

\begin{proof}
\begin{enumarab}
\item  By theorem \ref{typeless} and \ref{am}. 
\item Let $\A_0$ be as in the proof of theorem \ref{am}, so that $\A_0$ is not in the amalgamation base of $\sf RTCA_{\alpha}$.
If $\A_0$ has $UNEP$ then using the same reasoning in theorem \ref{SUPAP}, by appeal to theorem \ref{s}, 
$\A_0$ would be  in the amalgamation base of $\sf RTCA_{\alpha}$ which is impossible.
\item The algebras $\A_0$, $\A_1$, $\A_1$ in the proof of theorem \ref{am} are representable, 
hence they are in $\bold S\Nr_{\alpha}\sf TCA_{\alpha+\omega}$
but they do not have an amalgam in $\sf TCA_{\alpha}$, {\it a fortiori} in 
$\bold S\Nr_{\alpha}\sf TCA_{\alpha+k}$. $WIP$ follows from theorem \ref{typeless} or by noting that
${\sf Sim}(\TCA_{\alpha})={\sf Sim}(\sf RTCA_{\alpha})={\sf Sim}(\bold S\Nr_{\alpha}\sf TCA_{\alpha+k})$
and then using theorem \ref{weakIP}.

\item Let $\A_0$ be as in the proof of theorem \ref{am}. Then $\A_0$ does not have $UNEP$. 
So $\A_0$  generates non - isomorphic algebras in extra dimensions, 
the existence of a universal map for it,
will force that these algebras are actually isomorphic, fixing it pointwise, and this cannot happen.

In more detail, assume that $\A_0$ neatly embeds into $\B$ via $e_B$ and into
$\B'$ via $e_{B'}$. Let $(e, \C)$ be a universal map for $\A_0$, so that
$\A$ neatly embeds into $\C$ via $e$. (See the above diagram). By universality, there exists isomorphisms $f:\C\to \B$ and $k:\C\to \B'$ such that
$f\circ e=e_B$ and $k\circ e=e_B'$. The maps are injective by definition,
they are surjective, because $\A_0$ is contained in $\C$ and it generates both $\B$ and $\B'$.
We infer that $\B$ and $\B'$ are isomorphic, but we want more. We want to exclude special isomorphisms
(in principal, isomorphisms can exist as long as they do not fix
$\A$ pointwise). We have $h=k\circ f^{-1}:\B\to \B'$ is an isomorphism such that
$h\circ e_B=e_{B'}$, and this isomorphism is as required, leading to a  contradiction.

\item It is known \cite[Theorem 27.3 on  p. 196]{cat}
 that if $G:K\to L$ is a functor such that  each $\B\in Ob(K)$ has a $G$ universal map $(\mu_B, \A_B)$, 
then there exists a unique adjoint situation $(\mu, \epsilon):F\to G$
such that $\mu=(\mu_B)$ and for each $\B\in Ob(L),$
$F(\B)=\A_B$. Conversely, if we have an adjoint situation $(\mu,\epsilon):F\to G$ 
then for each $\B\in Ob(L)$ $(\mu_B, F(\B))$ have a $G$ universal map.

\end{enumarab}

\end{proof}

\begin{theorem} For any $\alpha>0$, the following conditions are equivalent for $\A\in \sf TCA_{\alpha}$.
\begin{enumarab}
\item $\A$ has the $UNEP.$
\item $\A\in {\sf APbase}({\sf RTCA}_{\alpha}).$
\item $\A$ has  a universal map with respect to the functor $\Nr.$
\end{enumarab}
\end{theorem}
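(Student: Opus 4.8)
The plan is to run the cycle $(1)\Rightarrow(3)\Rightarrow(2)\Rightarrow(1)$; along the way $(1)\Leftrightarrow(2)$ reappears as (part of) theorem \ref{apt}(8), while $(1)\Leftrightarrow(3)$ carries the genuinely categorical content. Since each of the three conditions forces $\A\in{\sf RTCA}_{\alpha}$ (and $UNEP$ is only defined for representable algebras), I work with representable $\A$ throughout. Write $\bold K=\{\B\in\TCA_{\alpha+\omega}:\Sg^{\B}\Nr_{\alpha}\B=\B\}$ for the source category of the neat reduct functor $\Nr$. The two external inputs I would lean on are the neat embedding theorem \cite[Theorem 4.2(2)]{part1} (every algebra in ${\sf RTCA}_{\alpha}$ embeds into $\Nr_{\alpha}\C$ for some $\C\in\TCA_{\alpha+\omega}$) and the fact, recorded as theorem \ref{s}, that $\bold K$ has $SUPAP$. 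I also use silently that $\Nr_{\alpha}\B\in{\sf RTCA}_{\alpha}$ for $\B\in\TCA_{\alpha+\omega}$, and that $\Sg^{\C}\C_0\in\bold K$ whenever $\C_0\subseteq\Nr_{\alpha}\C$.

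\textbf{$(1)\Rightarrow(3)$.} First use the neat embedding theorem to choose a neat embedding $e:\A\to\Nr_{\alpha}\C$ and, replacing $\C$ by $\Sg^{\C}e(A)$, arrange that $\C\in\bold K$ with $e(A)$ generating $\C$. I claim $(e,\C)$ is a $\Nr$-universal map for $\A$. Given $\C'\in\bold K$ and a morphism $f:\A\to\Nr_{\alpha}\C'$, put $\C''=\Sg^{\C'}f(A)\in\bold K$; then $f:\A\to\Nr_{\alpha}\C''$ is a neat embedding onto a generating subreduct, so $UNEP$, applied with $\A'=\A$, $i=\mathrm{id}_{\A}$, $e_A=e$ and $e_{A'}=f$, yields an isomorphism $j:\C\to\C''$ with $j\circ e=f$. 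Composing $j$ with the inclusion $\C''\hookrightarrow\C'$ gives a morphism $\bar f:\C\to\C'$ of $\bold K$ with $\Nr(\bar f)\circ e=f$, and $\bar f$ is unique because $e(A)$ generates $\C$. Hence $(e,\C)$ is $\Nr$-universal.

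\textbf{$(3)\Rightarrow(2)$.} Let $(e,\C)$ be a $\Nr$-universal map for $\A$ and let $i_1:\A\to\A_1$, $i_2:\A\to\A_2$ be embeddings in ${\sf RTCA}_{\alpha}$. Dilate each $\A_j$ to $\A_j^+\in\bold K$ with $\A_j\subseteq\Nr_{\alpha}\A_j^+$ (neat embedding theorem, then pass to $\Sg^{\A_j^+}\A_j$). Universality turns $i_j:\A\to\Nr_{\alpha}\A_j^+$ into injective homomorphisms $\bar i_j:\C\to\A_j^+$ with $\Nr(\bar i_j)\circ e=i_j$; $SUPAP$ of $\bold K$ then supplies $\D^+\in\bold K$ and embeddings $u:\A_1^+\to\D^+$, $v:\A_2^+\to\D^+$ with $u\circ\bar i_1=v\circ\bar i_2$. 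Setting $\D=\Nr_{\alpha}\D^+\in{\sf RTCA}_{\alpha}$, $m_1=u\upharpoonright\A_1$ and $m_2=v\upharpoonright\A_2$ gives an amalgam of $\A_1,\A_2$ over $\A$, since $m_1(i_1(a))=u(\bar i_1(e(a)))=v(\bar i_2(e(a)))=m_2(i_2(a))$. This is exactly the argument behind theorem \ref{SUPAP}, with $UNEP$ of the base replaced by the universal map; in particular it also gives the direct route $(1)\Rightarrow(2)$, and $(3)\Rightarrow(1)$ may alternatively be read off from the universal-map argument already given for corollary \ref{infinitevarieties}(4).

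\textbf{$(2)\Rightarrow(1)$, and the main obstacle.} Assume $\A\in{\sf APbase}({\sf RTCA}_{\alpha})$ and suppose $\A,\A',\B,\B',e_A,e_{A'},i$ witness a potential failure of $UNEP$; pre-composing $e_{A'}$ with $i$ we may take $\A'=\A$, $i=\mathrm{id}_{\A}$, so $e_A:\A\to\Nr_{\alpha}\B$, $e_{A'}:\A\to\Nr_{\alpha}\B'$ are embeddings into members of ${\sf RTCA}_{\alpha}$ with $\Sg^{\B}e_A(A)=\B$ and $\Sg^{\B'}e_{A'}(A)=\B'$. Amalgamating $\Nr_{\alpha}\B$ and $\Nr_{\alpha}\B'$ over $\A$ (via $(2)$) yields $\D\in{\sf RTCA}_{\alpha}$ and embeddings $p:\Nr_{\alpha}\B\to\D$, $q:\Nr_{\alpha}\B'\to\D$ with $p\circ e_A=q\circ e_{A'}$. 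The plan is then to dilate: find $\D^+\in\TCA_{\alpha+\omega}$ with $\D\subseteq\Nr_{\alpha}\D^+$ and extend $p,q$ to embeddings $\hat p:\B\to\D^+$, $\hat q:\B'\to\D^+$; granting this, $\hat p(\B)=\Sg^{\D^+}p(e_A(A))=\Sg^{\D^+}q(e_{A'}(A))=\hat q(\B')$, so $\bar i:=\hat q^{-1}\circ\hat p:\B\to\B'$ is an isomorphism with $\bar i\circ e_A=\hat q^{-1}\circ q\circ e_{A'}=e_{A'}$, which is precisely the conclusion of $UNEP$. The delicate point — and, I expect, the bulk of the work — is this extension step: homomorphisms between neat $\alpha$-reducts do not in general lift to the full $(\alpha+\omega)$-dimensional algebras. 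What should make it succeed is that one needs to extend $p,q$ only on the generating subreducts $e_A(A)$, $e_{A'}(A)$; that their images coincide and sit inside $\Nr_{\alpha}\D^+$, hence form a neat subreduct of $\D^+$ isomorphic to $\A$; and that $\B,\B'$ are generated by those subreducts. One then takes for $\D^+$ a dilation in which this common copy of $\A$ lives and realises $\B$, $\B'$ as the subalgebra it generates, reasoning as in the proof of theorem \ref{SUPAP} — and, if needed, first arranging the amalgam $\D$ to be itself a neat reduct so that the lift is forced. Everything else is bookkeeping over theorem \ref{s} and the neat embedding theorem of part~1.
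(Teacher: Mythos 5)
Your directions $(1)\Rightarrow(3)$ and $(3)\Rightarrow(2)$ are sound and consistent with the machinery the paper actually uses elsewhere: the first is a clean unwinding of the definitions (the only points to check being that $\Sg^{\C'}f(A)$ lies in $\bold K$ and that $f(A)\subseteq\Nr_{\alpha}\Sg^{\C'}f(A)$, both of which you handle), and the second is exactly the argument of theorem \ref{SUPAP} with the universal map standing in for $UNEP$. Note, though, that the paper itself gives no proof here at all -- it defers entirely to \cite{logica} -- so the real question is whether your reconstruction closes the cycle. It does not.

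The gap is the one you yourself flag in $(2)\Rightarrow(1)$, and your proposed repair does not fill it; it is circular. After amalgamating $\Nr_{\alpha}\B$ and $\Nr_{\alpha}\B'$ over $\A$ to get $\D$ with $p\circ e_A=q\circ e_{A'}$, you need embeddings $\hat p:\B\to\D^+$, $\hat q:\B'\to\D^+$ extending $p,q$ on the generating copies of $A$. Defining $\hat p(\tau^{\B}(e_A\bar a))=\tau^{\D^+}(p(e_A\bar a))$ requires precisely that $\tau^{\B}(e_A\bar a)=\sigma^{\B}(e_A\bar b)$ imply $\tau^{\D^+}(pe_A\bar a)=\sigma^{\D^+}(pe_A\bar b)$, i.e.\ that the $(\alpha+\omega)$-dimensional algebra generated by a copy of $\A$ sitting in a neat $\alpha$-reduct is determined up to isomorphism over $\A$ -- which is $UNEP$, the very statement to be proved. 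Your fallback, ``realise $\B$, $\B'$ as the subalgebra of $\D^+$ generated by the common copy of $\A$,'' assumes this same uniqueness; and arranging $\D=\Nr_{\alpha}\D^+$ does not force the lift either, since a homomorphism between neat reducts need not extend to the dilations even when its domain generates them. This direction is genuinely the hard content of the theorem (it is where the cited reference \cite{logica} does its work, and it is what fails to be automatic in the presence of diagonal elements, in contrast with the $\sf TPCA$ case of theorem \ref{dilation} where $NS$ gives it directly), so a proof that leaves it as a plausibility argument is incomplete. You would need either to reproduce the argument of \cite{logica} or to find an independent route from membership in the amalgamation base back to uniqueness of minimal dilations.
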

\begin{proof} This is proved for cylindric algebras in \cite{logica}. The proof lifts with no modifications.
\end{proof}

We need another algebraic counterpart of a yet another
definability property, namely, Beth definability.  A class
$\sf K$ {\it has $ES$ if epimorphisms in $\sf K$ (in the categorical sense)
are surjective.}

The next theorem is  folklore \cite{AUU, MStwo, MS}.

\begin{theorem}\label{e} Let $\sf K$ be a class of algebras. Then
\begin{enumarab}
\item If ${\sf K}$ has $SUPAP$ then it has $SAP$, $AP$ and $ES.$
\item ${\bf SP}{\sf K}$ has $SAP$ if and only if it has $AP$ and $ES$.
\end{enumarab}
\end{theorem}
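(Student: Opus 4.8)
\textbf{Proof.}
The plan is to treat the two items in turn, and to reduce everything to standard category-theoretic arguments about amalgamation and epimorphisms. For item (1), assume $\sf K$ has $SUPAP$. That $SUPAP\Rightarrow SAP\Rightarrow AP$ is immediate from the definitions (a super amalgam is a strong amalgam is an amalgam), and the implication $SUPAP\Rightarrow SAP$ was already noted in the text right after the definition of $SUPAP$. So the real content is $SUPAP\Rightarrow ES$. First I would recall that an epimorphism $f:\A\to \B$ in $\sf K$ (categorical sense) is a morphism that is right-cancellable: $g\circ f=h\circ f$ implies $g=h$ for all parallel $g,h:\B\to \C$ in $\sf K$. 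I want to show $f$ is surjective. The classical trick: suppose $f:\A\to\B$ is not surjective, and form the amalgam of $\B$ with itself over $f(\A)$ (here one uses that $f(\A)$, being a homomorphic image of $\A\in\sf K$, lies in $\sf K$ when $\sf K$ is closed under the relevant operations; in the intended applications $\sf K$ will be a variety, but to keep the statement at the stated level of generality one either assumes $\sf K$ contains the images, or one notes that $f(\A)\cong \A/\ker f$ and works with that). Apply $SUPAP$ (or just $AP$) to $i:f(\A)\hookrightarrow\B$ against itself: we obtain $\D\in\sf K$ and embeddings $m_1,m_2:\B\to\D$ agreeing on $f(\A)$, hence with $m_1\circ f = m_2\circ f$. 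If $f$ were epi this would force $m_1=m_2$; but the strongness of the amalgam gives $m_1(\B)\cap m_2(\B)=m_1(f(\A))$, so if there is $b\in B\setminus f(\A)$ then $m_1(b)\ne m_2(b)$, contradicting $m_1=m_2$. Hence $f$ is surjective, i.e. $\sf K$ has $ES$.

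\textbf{Proof of (2).}
The forward direction ($SAP\Rightarrow AP$ and $ES$) follows from item (1), since $SAP\Rightarrow AP$ is trivial and the $ES$ half of the argument above needs only the strongness of amalgams, which $SAP$ already supplies; note also that passing to $\bold{SP}\sf K$ does not affect this since $\bold{SP}\sf K\supseteq\sf K$ and $\bold{SP}\bold{SP}\sf K=\bold{SP}\sf K$. For the converse, assume $\bold{SP}\sf K$ has both $AP$ and $ES$; I want a strong amalgam. Given embeddings $i_1:\A_0\to\A_1$, $i_2:\A_0\to\A_2$ in $\bold{SP}\sf K$, first take an ordinary amalgam $\D$ with embeddings $m_1:\A_1\to\D$, $m_2:\A_2\to\D$, $m_1\circ i_1=m_2\circ i_2$. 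The obstruction to strongness is that $m_1(\A_1)\cap m_2(\A_2)$ may properly contain the common image of $\A_0$. The remedy is the standard one: consider the subalgebra $\E$ of $\D$ generated by $m_1(\A_1)\cup m_2(\A_2)$ and, using $\bold{SP}$-closure, map $\E$ suitably so that the inclusion of $m_1(\A_1)$ (or of the amalgamated copy of $\A_0$) becomes an epimorphism onto a subalgebra; $ES$ then forces that epimorphism to be surjective, collapsing the excess intersection down to the image of $\A_0$. More precisely, one builds the pushout-style construction inside $\bold{SP}\sf K$ and argues that any element of $m_1(\A_1)\cap m_2(\A_2)$ lies in the dominion of the common subalgebra, which by $ES$ equals that subalgebra itself; this yields the strong amalgam. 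I would cite \cite{MStwo, MS, AUU} for the precise form of the dominion/$ES$ argument, since the theorem is explicitly flagged as folklore.

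\textbf{Main obstacle.}
The delicate point is the converse in (2): turning "the intersection of the two images is too big'' into a failure of $ES$. This requires the correct pushout/dominion formalism and the observation that the dominion of a subalgebra $\C$ in an overalgebra $\B$ (relative to $\bold{SP}\sf K$) can be computed as $\{b\in B: \text{the two inclusions of }\B\text{ into }\B\amalg_{\C}\B\text{ agree on }b\}$, so that $ES$ — equivalently, every dominion being trivial — is exactly what forces $m_1(\A_1)\cap m_2(\A_2)$ down to the copy of $\A_0$. One must also be slightly careful that all auxiliary algebras ($f(\A)$, the generated subalgebra $\E$, the pushout) genuinely lie in $\bold{SP}\sf K$; this is where the passage from $\sf K$ to $\bold{SP}\sf K$ in the statement is used, and it is why item (1) is stated for arbitrary $\sf K$ while item (2) is stated for $\bold{SP}\sf K$. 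The Boolean-with-operators character of $\sf TCA_\alpha$ plays no special role here beyond guaranteeing congruence-permutable, congruence-distributive behaviour that makes dominions well-behaved; the argument is entirely general.
\hfill\rule{2mm}{2mm}
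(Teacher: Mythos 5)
The paper offers no proof of this theorem: it is quoted as folklore with pointers to \cite{AUU, MStwo, MS}, so your sketch can only be assessed on its own terms. Item (1) and the forward half of item (2) are the standard argument and are essentially right: $SUPAP\Rightarrow SAP\Rightarrow AP$ is definitional, and for $ES$ one amalgamates $\B$ with itself over $f(\A)$ and uses \emph{strongness} of the amalgam to separate $m_1(b)$ from $m_2(b)$ for $b\notin f(\A)$. Two caveats: your parenthetical ``(or just $AP$)'' is misleading, since $AP$ alone permits the trivial amalgam $m_1=m_2=\mathrm{id}_{\B}$ and so gives no contradiction with $f$ being an epimorphism --- strongness is indispensable (the paper's own Table 1 records classes with $AP$ but without $ES$); and, as you yourself note, one needs $f(\A)$ to lie in $\sf K$ up to isomorphism, so the $ES$ part of item (1) tacitly requires closure under subalgebras or a restriction to injective epimorphisms.

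The genuine gap is in the converse half of (2). Your ``more precise'' claim --- that every element of $m_1(\A_1)\cap m_2(\A_2)$ lies in the dominion of the common subalgebra --- is false for an arbitrary amalgam $\D$: take $\A_1=\A_2$ with $i_1=i_2$ and the identity amalgam $\D=\A_1$, $m_1=m_2=\mathrm{id}$; the intersection is then all of $\A_1$ while the dominion of $\A_0$ may be just $\A_0$ (e.g.\ in Boolean algebras). The Isbell-type identity you quote holds for the coprojections into the pushout $\B\amalg_{\C}\B$, not for every amalgam, and $ES$ cannot ``collapse'' an oversized intersection after the fact --- it only asserts the nonexistence of nonsurjective epimorphisms. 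The correct (Ringel-style) route is: for each pair $(a_1,a_2)\in A_1\times A_2$ not of the form $(i_1(a),i_2(a))$ with $a\in A_0$, produce an amalgam $\D_{(a_1,a_2)}$ in which $m_1(a_1)\neq m_2(a_2)$; in the nontrivial case $a_1\notin i_1(A_0)$ this uses triviality of the dominion of $\A_0$ in $\A_1$ (the point where $ES$ and the pushout characterization enter) to obtain $g,h:\A_1\to\C$ agreeing on $\A_0$ with $g(a_1)\neq h(a_1)$, followed by a further application of $AP$. One then takes the \emph{product} $\prod\D_{(a_1,a_2)}$ of all these separating amalgams with the induced embeddings; this product step is the essential use of ${\bf P}$ and the reason item (2) is stated for ${\bf SP}{\sf K}$, and it is absent from your sketch, which attributes the role of ${\bf SP}$ only to keeping auxiliary algebras inside the class.
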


Our next $ES$ result can also be easily distilled from the cylindric counterpart, as follows.

\begin{theorem}\label{es}
Let $\alpha>1$. Then for any class $\sf K\subseteq \TCA_{\alpha}$, that contains the class 
of simple representable algebras, $ES$, hence $SAP$ fail. In particular, for $\alpha\geq \omega$ 
and $k\geq 1$, $ES$ fails for $S\Nr_{\alpha}\sf TCA_{\alpha+k}.$
\end{theorem}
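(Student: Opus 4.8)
The plan is to deduce the statement from its cylindric-algebra counterpart, following the pattern of the proofs of Theorems \ref{am} and \ref{Comer}. First I would invoke the known fact for cylindric algebras: for $\alpha>1$, $ES$ fails for every class of cylindric algebras of dimension $\alpha$ lying between the simple representable ones and $\CA_{\alpha}$, and the failure can be witnessed by set algebras. Concretely, there are simple set algebras $\A\subsetneq\B$ of dimension $\alpha$, on a common base $U$, such that the inclusion $h\colon\A\hookrightarrow\B$ is an epimorphism in $\CA_{\alpha}$ that is not onto; this is the algebraic reflection of the failure of the Beth definability property, see \cite{AUU,MStwo,MS} (and \cite{es}).

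Next I would transfer the witnesses to the topological setting by endowing $U$ with the discrete topology, i.e.\ expanding $\A$ and $\B$ to topological cylindric algebras $\hat\A$, $\hat\B$ in which every interior operator $I_i$ is interpreted as the identity. By the observation recalled in the introduction, $\hat\A$ and $\hat\B$ are representable (a $\CA_{\alpha}$ expanded this way yields a representable $\sf TCA_{\alpha}$ precisely when the original was representable); and since $\Rd_{ca}\hat\A=\A$ and $\Rd_{ca}\hat\B=\B$ are simple while every congruence of a topological cylindric algebra is in particular a congruence of its $\CA$-reduct (cf.\ Theorem \ref{lattice}), $\hat\A$ and $\hat\B$ are simple as well. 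Hence $\hat\A$ and $\hat\B$ belong to every class $\sf K$ satisfying the hypothesis, and $h\colon\hat\A\hookrightarrow\hat\B$ is still not surjective.

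The key remaining point is that $h$ stays an epimorphism in $\sf K$, even though $\sf K$ need not consist of representable algebras. Let $\C\in\sf K\subseteq\sf TCA_{\alpha}$ and let $f,g\colon\hat\B\to\C$ be homomorphisms with $f\circ h=g\circ h$. Passing to $\CA$-reducts, $\Rd_{ca}f,\Rd_{ca}g\colon\B\to\Rd_{ca}\C$ are $\CA_{\alpha}$-homomorphisms agreeing on $\A$, so $\Rd_{ca}f=\Rd_{ca}g$ since $h$ is an epimorphism in $\CA_{\alpha}$; hence $f=g$. Thus $h$ is a non-surjective epimorphism in $\sf K$ and $ES$ fails for $\sf K$. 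For $SAP$: taking $\A_0=\hat\A$, $\A_1=\A_2=\hat\B$ and $i_1=i_2=h$ (both injective), any strong amalgam $(\D,m_1,m_2)$ would have $m_1=m_2$ because $h$ is epi, so $m_1(i_1(\hat\A))=m_1(h(\hat\A))\subsetneq m_1(\hat\B)=m_1(\hat\B)\cap m_2(\hat\B)$, contradicting the strong amalgamation condition (alternatively, invoke Theorem \ref{e}). Finally, the neat embedding theorem gives $\sf RTCA_{\alpha}=\bold S\Nr_{\alpha}\sf TCA_{\alpha+\omega}\subseteq\bold S\Nr_{\alpha}\sf TCA_{\alpha+k}\subseteq\sf TCA_{\alpha}$ for $\alpha\geq\omega$ and $k\geq1$, so $\bold S\Nr_{\alpha}\sf TCA_{\alpha+k}$ contains all simple representable algebras and the general statement applies to it.

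I expect the main obstacle to be verifying — or quoting in exactly the right form — that the cylindric failure of $ES$ is witnessed by algebras whose relevant inclusion is an epimorphism in the full variety $\CA_{\alpha}$ (and not merely in $\sf RCA_{\alpha}$), since this is what lets the $\CA$-reduct argument go through for an arbitrary $\C\in\sf K$ including non-representable ones; everything else is routine reduct bookkeeping.
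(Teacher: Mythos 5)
Your proposal is correct and follows essentially the same route as the paper: take the known simple set-algebra witnesses of the failure of $ES$ in the cylindric setting, expand them to $\sf RTCA_{\alpha}$ by giving the base the discrete topology (so the interior operators are identities and simplicity is preserved), observe via $\CA$-reducts that the inclusion remains a non-surjective epimorphism even against arbitrary test objects in $\TCA_{\alpha}$, and conclude the $S\Nr_{\alpha}\sf TCA_{\alpha+k}$ case from $\sf RTCA_{\alpha}\subseteq S\Nr_{\alpha}\sf TCA_{\alpha+k}\subseteq \sf TCA_{\alpha}$. The point you flag at the end — that the cylindric witnesses must be epimorphisms in the full variety $\CA_{\alpha}$, not merely among representable algebras — is exactly the form in which the paper quotes the cited constructions, so your argument matches the paper's.
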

\begin{proof} 
Assume that $\alpha\geq \omega$. Notice that in this case all simple algebras are representable.
Let $\A, \B$ be the algebras such that $\A\subseteq\B$ is an epimorphism that is not surjective.
Such algebras are constructed in \cite{MStwo}; they are weak set algebras,
$\A$ is generated by a single element $R$ and $\B$ is generated by two elements $R$ and $X$ hence 
$X\notin \A$. Also  $\A$ and $\B$ have the same unit  $V$, with common base $U$
tha can be endowed with the discrete topology. So we can assume that $\A,\B\in \sf RTCA_{\alpha}$
and still we have $\A\subseteq \B$.  Assume that $\D\in \TCA_{\alpha}$
witnesses that this inclusion is not an epimorphism.
Hence there is $\D\in \sf TCA_{\alpha}$ and homomorphisms $f,g:\B\to \D$
such that $f(X)\neq g(X)$, this is impossible because then $\Rd_{ca}\D$
witnesses that $\A\subseteq \B$ is not an epimorphism. 

In \cite{MStwo} 
it is shown that $\A$ and $\B$ can be chosen to be semi-simple and in \cite{es} it is shown that they can further
be chosen to be simple. For finite $\alpha>1$ the required follows easily from the construction in \cite{ACNS} expanding, like we often did before,
the constructed algebras by the identity interior operators corresponding to the 
discrete topology. The last part follows from the fact that $\sf RTCA_{\alpha}\subseteq S\Nr_{\alpha}\CA_{\alpha+k}\subseteq \sf TCA_{\alpha}$ 
for all $\alpha$.
\end{proof}
\begin{theorem}\label{simon}
\begin{enumarab}
\item  Let $\D$ the full set algebra with unit $^{\omega}\omega$; with $\omega$ having 
the discrete topology. Let $\M$ be its minimal subalgeba.
Then $\M\in \sf SUPAPbase(\sf RTCA_{\omega})\sim \sf APbase(\TCA_{\omega}).$
Furthermore,  $\TCA_{\omega}$ does not have $EP$, and $\Fr_{\omega}\CA_{\omega}$ does not have weak 
restricted $IP.$ 

\item If $1<\alpha<\omega$, then for any $\sf K$ 
such that $\sf RTCA_{\alpha}\subseteq \sf K\subseteq \TCA_{\alpha}$,
$\Fr_{\omega}\sf K$ does not have weak restricted $IP$.
\end{enumarab}
\end{theorem}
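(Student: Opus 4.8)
The plan is that the lone positive assertion in~(1) is a direct consequence of the results of \S1, while every negative assertion is obtained by ``bouncing it to the cylindric case'', in the manner of Theorems~\ref{am} and~\ref{Comer}. For the positive half of~(1): since $\M$ is the minimal subalgebra of the $\sf TCA_{\omega}$-algebra $\D$, it has no proper subalgebras, so $\M\in \sf TMn_{\omega}$; and, $\omega$ being infinite, every element of $\M$ is a finite Boolean combination of diagonal elements and so has a finite dimension set, whence $\M\in \sf TDc_{\omega}$ (the inclusion $\sf TMn_{\omega}\subseteq \sf TDc_{\omega}$ recorded just before Theorem~\ref{weak2}). As $\M$ sits inside the representable algebra $\D$, it lies in $\sf RTCA_{\omega}$, and Theorem~\ref{SUPAP}(1) gives $\sf TDc_{\omega}\subseteq \sf SUPAPbase(\sf RTCA_{\omega})$; therefore $\M\in \sf SUPAPbase(\sf RTCA_{\omega})$.

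For the negative half of~(1) I would invoke the cylindric counterparts (noting that $\M$ has characteristic $0$). It is known --- the cylindric form of Comer's construction, see \cite{Comer,conference} --- that the minimal subalgebra $\M_{0}$ of the full cylindric set algebra $\wp({}^{\omega}\omega)$ is \emph{not} in the amalgamation base of $\CA_{\omega}$: there are simple $\CA_{\omega}$'s $\A,\B$ having $\M_{0}$ as common minimal subalgebra that cannot be amalgamated inside $\CA_{\omega}$. Endowing the bases of $\A$ and $\B$ with the discrete topology turns them into $\sf TCA_{\omega}$-algebras whose $\CA$-reducts are $\A,\B$ and whose common minimal subalgebra is $\M$ (the discrete-topology expansion of $\M_{0}$). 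Any amalgam of the two in $\TCA_{\omega}$ would, on applying $\Rd_{ca}$, give an amalgam of $\A,\B$ in $\CA_{\omega}$ --- a contradiction; hence $\M\notin \sf APbase(\TCA_{\omega})$, and since $\M$ is minimal this is exactly the failure of $EP$ for $\TCA_{\omega}$. Finally, that $\Fr_{\omega}\CA_{\omega}$ fails the weak restricted $IP$ is the free-algebra shadow of the same phenomenon, which I would quote from the cylindric literature (\cite{logica,conference}); equivalently, were $\Fr_{\omega}\CA_{\omega}$ to have it, running the argument of $(7)\Rightarrow(1)$ of Theorem~\ref{app} with two \emph{disjoint} blocks of free generators --- so that the common subalgebra is the minimal one --- would amalgamate the above $\A,\B$ over $\M_{0}$ inside $\CA_{\omega}$.

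For~(2), fix $1<\alpha<\omega$ and $\sf RTCA_{\alpha}\subseteq \sf K\subseteq \TCA_{\alpha}$. By Theorem~\ref{Comer} there are simple, finite set algebras $\A_{0}=\A(\alpha,U_{0})$, $\A_{1}=\A(\alpha,U_{1})$ with $\alpha\le|U_{0}|<|U_{1}|<\omega$ whose minimal subalgebras are isomorphic via a diagonal-preserving map but which admit no common embedding into any $\TCA_{\alpha}$-algebra. Suppose, for a contradiction, that $\Fr_{\omega}\sf K$ has the weak restricted $IP$. Put $\D=\Fr_{\omega}\sf K$, choose disjoint finite sets $Y,Z$ of free generators and surjective homomorphisms $h_{1}\colon\Sg^{\D}Y\to\A_{0}$, $h_{2}\colon\Sg^{\D}Z\to\A_{1}$ (possible, since $\A_{0},\A_{1}$ are finite and belong to $\sf K$), and set $M=\ker h_{1}$, $N=\ker h_{2}$. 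Since $\Sg^{\D}(Y\cap Z)=\Sg^{\D}\emptyset$ is the minimal subalgebra of $\D$ and there is exactly one homomorphism from it onto the minimal subalgebra of any given algebra, $M$ and $N$ restrict identically to $\Sg^{\D}\emptyset$: $M\cap\Sg^{\D}\emptyset=N\cap\Sg^{\D}\emptyset$. Now run the filter computations of $(7)\Rightarrow(1)$ in Theorem~\ref{app} with $Y,Z$ disjoint: using that filters are closed under the ${\sf q}_{i}$'s, that $M$ and $N$ are proper, and --- $\alpha$ being finite --- that the weak restricted $IP$ upgrades to the almost restricted $IP$ through the discriminator term ${\sf c}_{(\alpha)}$ (exactly as in the theorem preceding Corollary~\ref{notequivalent}), one obtains that $\Fl^{\D}(M\cup N)$ is proper, with $\Fl^{\D}(M\cup N)\cap\Sg^{\D}Y=M$ and $\Fl^{\D}(M\cup N)\cap\Sg^{\D}Z=N$. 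Then $\D/\Fl^{\D}(M\cup N)$, together with the embeddings induced by $h_{1}$ and $h_{2}$, is an amalgam of $\A_{0}$ and $\A_{1}$ over their common minimal subalgebra inside $\bold H\sf K\subseteq\TCA_{\alpha}$, contradicting Theorem~\ref{Comer}. Hence $\Fr_{\omega}\sf K$ does not have the weak restricted $IP$.

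The main obstacle is the translation between the failure of the embedding property and the failure of (weak) restricted interpolation in $\Fr_{\omega}\sf K$: one must check that disjointness of the two generator blocks genuinely forces the relevant common subalgebra to be the minimal one, that $M$ and $N$ trace compatibly onto it, and that the interpolation hypothesis is precisely what keeps $\Fl^{\D}(M\cup N)$ proper and pins down its intersections with $\Sg^{\D}Y$ and $\Sg^{\D}Z$. In finite dimension the discriminator term ${\sf c}_{(\alpha)}$ absorbs the cylindrifications separating the weak variant from the almost variant, which is what makes this work; in dimension $\omega$ there is no such term, and that is why the $\Fr_{\omega}\CA_{\omega}$ clause is taken from \cite{logica,conference} rather than re-proved here. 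With these inputs in hand, the remainder is the bookkeeping already carried out in Theorems~\ref{am}, \ref{Comer} and~\ref{app}.
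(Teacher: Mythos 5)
Your positive half of (1) and your item (2) follow the paper's route: $\M$ is minimal, hence in $\sf TMn_{\omega}\subseteq\sf TDc_{\omega}$, so Theorem \ref{SUPAP} places it in ${\sf SUPAPbase}(\sf RTCA_{\omega})$; and the finite-dimensional clause is the free-algebra translation of Theorem \ref{Comer} via the filter computations of Theorems \ref{app} and \ref{weakIP}. The negative half of (1), however, rests on a false input. You assert that there are two \emph{simple} $\CA_{\omega}$'s $\A,\B$ with common minimal subalgebra $\M_{0}$ that cannot be amalgamated in $\CA_{\omega}$, and you propose to realize them as set algebras whose bases you then equip with the discrete topology. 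No such pair exists, and the claim contradicts the positive half of the very theorem you are proving: for $\alpha\geq\omega$ every simple algebra lies in $\Sc_{\alpha}$ and is therefore representable (Theorem \ref{sc}), $\M_{0}$ is minimal hence dimension complemented, and Theorem \ref{SUPAP} together with Corollary \ref{embedding} then produces a simple, representable super-amalgam of $\A$ and $\B$ over $\M_{0}$. (Comer's construction, which you invoke, is intrinsically finite-dimensional: it trades on the distinct characteristics of set algebras over finite bases of different sizes and has no $\omega$-dimensional analogue of the kind you describe.) Since $\M\in{\sf SUPAPbase}(\sf RTCA_{\omega})$, any witness to $\M\notin{\sf APbase}(\TCA_{\omega})$ must involve a \emph{non-representable} algebra; the construction the paper relies on (\cite{Sayedneat, Simon}, and see the commentary on Row seven after Table 1) amalgamates a representable algebra against a non-representable one obtained by \emph{twisting} it, the two sharing the minimal subalgebra $\M$. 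That algebra cannot be produced by your ``endow the base with the discrete topology'' step, since everything so obtained is representable.

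The same defect infects your derivation that $\Fr_{\omega}\CA_{\omega}$ fails the weak restricted $IP$: the translation from failure of $EP$ to failure of restricted interpolation over the minimal subalgebra (your filter argument, which is the intended content of ``the reasoning in theorem \ref{weakIP}'') is sound, but it must be fed the representable/twisted pair rather than two simple set algebras. Once the correct pair is substituted, the $EP$ clause of (1) and your item (2) go through essentially as written; the ${\sf SUPAPbase}$ clause of (1) is correct as you have it.
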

\begin{proof} For the infinite dimensional case witness \cite{Sayedneat, Simon}. 
The second  part follows from the reasoning in theorem \ref{weakIP}.  The second item concerning finite dimensional algebras 
follows from theorem \ref{Comer}.
\end{proof}

\section{Representability and amalgamation for various topological polyadic algebras}

The class ${\sf TPCA}_{\alpha}$, to be dealt with next, 
is defined by restricting the signature and axiomatization of Halmos' 
polyadic algebras to finite cylindrifiers,  so that we have all substitition operators but only ${\sf c}_i$ for $i\in \alpha$, and interior operators $I_i$.
Here we do not have diagonal elements and we consider {\it only infinite dimensions}.
In more detail:
The dimension set of $x$, in symbols $\Delta x$, is defined exactly as in the $\sf TCA$ case; that is
$\Delta x=\{i\in \alpha: {\sf c}_ix\neq x\}$.
If $\A\in \sf TPCA_{\alpha}$, then $\Rd_{pa}\A$ denotes its reduct obtained by discarding all interior operators.
Henkin ultrafilters are defined exactly like before; they are the ultrafilters that {\it eliminate cylindrifiers.}
That is for $\A\in \sf TPCA_{\alpha}$ a Boolean ultrafilter $F$ is {\it Henkin} if for all $x\in \A$ for all $k<\alpha$, if 
${\sf c}_kx\in F,$ then there exists $l\notin \Delta x$, 
such that ${\sf s}^k_lx\in F$.

\begin{theorem}\label{dilation} Let $\alpha<\beta$ be infinite ordinals. 
Then for every $\A\in \sf TPCA_{\alpha}$ there is a 
unique $\B\in \sf TPCA_{\beta}$ up to isomorphism that fixes $\A$ such that
$\A\subseteq \Nr_{\alpha}\B$ and for all $X\subseteq A$, $\Sg^{\A}X=\Nr_{\alpha}\Sg ^{\B}X$. 
In particular, $\TPCA_{\alpha}$ has $NS$ and $UNEP$.
Furthermore, if $F$ is a Henkin  ultrafilter of $\B$ and $a\in F$, then 
there exists a topology on $\beta$ and a homomorphism 
$f:\A\to (\wp({}^{\alpha}\beta), J_i)_{i<\alpha}$ with $f(a)\neq 0$ 
where  for $i<\alpha$ and $X\subseteq {}^{\alpha}\beta$, 
$J_iX=\{s\in {}^{\alpha}\beta: s_i\in {\sf int}\{u\in U: s^i_u\in X\}\}$. 
\end{theorem}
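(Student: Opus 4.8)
The plan is to build the dilation $\B$ first and then extract the set-theoretic representation from a Henkin ultrafilter. For the existence of $\B$, I would adapt the standard neat-embedding (dilation) construction for polyadic-type algebras to the topological setting: take $\B$ to be the $\beta$-dimensional algebra generated by $\A$ inside a suitable ``big'' polyadic algebra, concretely using the minimal dilation whose existence for $\sf PA$-reducts is classical (the references \cite{HMT1}, together with the neat-reduct machinery of \cite{Sayedneat, part1}). The only genuinely new point is to check that the interior operators $I_i$ ($i<\alpha$) extend coherently to $I_i$ ($i<\beta$) so that $\B\in\sf TPCA_\beta$ and that $\A$ sits inside $\Nr_\alpha\B$ as a \emph{topological} subalgebra, i.e.\ that the $I_i$ of $\B$ restricted to $\Nr_\alpha\B$ agree with those of $\A$. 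Uniqueness up to an isomorphism fixing $\A$, and the property $\Sg^\A X=\Nr_\alpha\Sg^\B X$ for all $X\subseteq A$, follow from the same argument used for $\sf TCA$ in the first part and recalled in the excerpt; this immediately yields $NS$ and $UNEP$ for $\TPCA_\alpha$, exactly as in the lemma preceding Theorem~\ref{SUPAP}.

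For the second half, let $F$ be a Henkin ultrafilter of $\B$ with $a\in F$. I would first use $F$ to build the usual ``Henkin-style'' representation of the \emph{polyadic reduct} $\Rd_{pa}\B$: define $h:\B\to\wp({}^{\beta}\beta)$ (or the appropriate cylindric-set-type space) in the standard way, sending $x$ to the set of assignments $s$ with $\sub{s}{}x\in F$ in the notation where substitutions act on $F$; the fact that $F$ eliminates cylindrifiers (Henkin) is exactly what makes $h$ respect the ${\sf c}_i$, and $h(a)\neq 0$ since $a\in F$. Restricting assignments and using $\A\subseteq\Nr_\alpha\B$ gives a homomorphism $f:\A\to\wp({}^{\alpha}\beta)$ of the polyadic part with $f(a)\neq 0$. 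The remaining task is to equip $\beta$ with a topology so that the prescribed operators $J_i$ agree with $f\circ I_i$.

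The main obstacle — and where I would concentrate the work — is defining the topology on $\beta$ and verifying $f(I_i x)=J_i f(x)$. The natural move is to let the open sets be generated by the sets $\{u\in\beta : \sub{k}{u}x\in F$ for some/all relevant $x\}$, or more precisely to declare $V\subseteq\beta$ open iff $V$ is an upward image under the Henkin representation of an ``interior-closed'' element; one checks this is a topology using the $\TPCA$ axioms for $I_i$ (that $I_i$ is an interior operator commuting appropriately with substitutions, and the cross axiom $\sub{i}{j}I_ix = I_j\sub{i}{j}x$), and that $F$ being an ultrafilter makes membership decidable so that ${\sf int}$ computed in this topology matches the algebraic $I_i$ under $f$. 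Concretely, $s\in J_i f(x)$ iff $s_i\in{\sf int}\{u: s^i_u\in f(x)\}=\{u:\sub{i}{u}x\in F\}^{\circ}$, and one wants this to hold iff $s\in f(I_i x)$, i.e.\ iff $\sub{s}{}I_i x\in F$; the forward direction uses $I_i x\le x$ and monotonicity, the reverse uses that $I_i x$ is the largest $I_i$-open element below $x$ together with the Henkin property to witness the relevant substitution instances. This is the step that requires care with the interplay of substitutions, the ultrafilter, and the definition of the topology; once it is in place, the homomorphism property for the Boolean and cylindric operations is routine and inherited from the polyadic Henkin representation, and composing with the restriction $\B\to\A$-side gives the stated $f$.
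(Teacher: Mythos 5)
Your plan follows essentially the same route as the paper's proof: the dilation is the Daigneault--Monk minimal functional dilation $F({}^{\beta}\alpha,\A)$ with cylindrifiers \emph{and} interior operators re-defined on elements ${\sf s}_{\sigma}p$ (the paper's formula is $I(k){\sf s}_{\sigma}^{\B}p={\sf s}_{\rho^{-1}}^{\B}I(\rho(\{k\}\cap\sigma\alpha)){\sf s}_{(\rho\sigma\upharpoonright\alpha)}^{\A}p$, which is exactly the ``coherent extension of the $I_i$'' you flag as the new point), $NS$/$UNEP$ are obtained by the standard free-algebra lifting argument, and the representation is $f(p)=\{\tau:{\sf s}_{\tau\cup Id}p\in F\}$ with the topology on $\beta$ generated by the sets $O_{p,i}=\{k:{\sf s}_i^kI(i)p\in F\}$ --- precisely your ``witness sets of interior-closed elements.'' Your verification sketch of $f(I_ix)=J_if(x)$ (one inclusion from $I_ix\leq x$ and monotonicity, the other from maximality of the interior plus the Henkin property) matches the paper's, which likewise proves one inclusion explicitly and defers the other to the analogous lemma in Part 1.
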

\begin{proof} Dilations are proved to exist similar to the arguments used in  \cite{DM, conference}; the rest follows 
using analogues of \cite[Theorems 2.9, 3.6]{part1}.
Nevertheless forming dilations here is more involved. 
We extensively use the techniques in \cite{DM}, but we have to watch out, for we only have finite cylindrifications.
Let $(\A, \alpha,S)$ be a transformation system in the sense of \cite{DM}.
Substitutions in $\A$, induce a homomorphism
of semigroups $S:{}^\alpha\alpha\to End(\A)$, via $\tau\mapsto {\sf s}_{\tau}$.
The operation on both semigroups is composition of maps; the latter is the semigroup of endomorphisms on
$\A$. For any set $X$, let $F(^{\alpha}X,\A)$
be the set of all functions from $^{\alpha}X$ to $\A$ endowed with Boolean  operations defined pointwise and for
$\tau\in {}^\alpha\alpha$ and $f\in F(^{\alpha}X, \A)$, put ${\sf s}_{\tau}f(x)=f(x\circ \tau)$.
This turns $F(^{\alpha}X,\A)$ to a transformation system as well.
The map $H:\A\to F(^{\alpha}\alpha, \A)$ defined by $H(p)(x)={\sf s}_xp$ is
easily checked to be an embedding. Assume that $\beta\supseteq \alpha$. Then $K:F(^{\alpha}\alpha, \A)\to F(^{\beta}\alpha, \A)$
defined by $K(f)x=f(x\upharpoonright \alpha)$ is an embedding, too.
These facts are straightforward to establish, cf.
\cite[Theorems 3.1, 3.2]{DM}.
Call $F(^{\beta}\alpha, \A)$ a minimal functional dilation of $F(^{\alpha}\alpha, \A)$.
Elements of the big algebra, or the (cylindrifier free)
functional dilation, are of form ${\sf s}_{\sigma}p$,
$p\in F(^{\beta}\alpha, \A)$ where $\sigma$ is one to one on $\alpha$, cf. \cite[Theorems 4.3-4.4]{DM}.

We can assume that $|\alpha|<|\beta|$.
Let $\B$ be the algebra obtained from $\A$, by discarding its cylindrifiers,
then dilating it to $\beta$ dimensions, that is,
taking  a minimal functional dilation in $\beta$
dimensions, and then re-defining cylindrifiers in the bigger algebra,
on the big algebra, so that they agree with their values in $\A$ as follows (*):
\begin{align*}
{\sf c}_k{\sf s}_{\sigma}^{\B}p&={\sf s}_{\rho^{-1}}^{\B} {\sf c}_{\rho(\{k\}\cap \sigma \alpha)}{\sf s}_{(\rho\sigma\upharpoonright \alpha)}^{\A}p.\\
I(k){\sf s}_{\sigma}^{\B}p&={\sf s}_{\rho^{-1}}^{\B} I(\rho(\{k\}\cap \sigma \alpha)){\sf s}_{(\rho\sigma\upharpoonright \alpha)}^{\A}p.
\end{align*}
Here $\rho$ is a any permutation such that $\rho\circ \sigma(\alpha)\subseteq \sigma(\alpha.)$
It can be checked by a somewhat tedious computation \cite{DM} that the definition is sound; in other words 
it is independent of $\rho, \sigma, p$.
and it defines the required dilation.

To prove $UNEP$ let  $\A, \A' \in \TPCA_{\alpha}$ and $\beta>\alpha$. Let
$\B, \B' \in \TPCA_{\beta}$ and assume that $e_A, e_{A'}$ are embeddings from $\A, \A'$ into   $\Nr_\alpha \B,
\Nr_\alpha \B'$, respectively, such that
$ \Sg^\B (e_A(A)) = \B$
and $ \Sg^{\B'} (e_{A'}(A')) = \B',$
and let $ i : \A \longrightarrow \A'$ be an isomorphism.
We need to ``lift" $i$ to $\beta$ dimensions.
Let $\mu=|A|$. Let $x$ be a  bijection  from $\mu$ onto $A.$
Let $y$ be a bijection from $\mu$ onto $A'$,
such that $ i(x_j) = y_j$ for all $j < \mu$.
Let $\D = \Fr_{\mu}\TPCA_{\beta}$ with generators $(\xi_i: i<\mu)$. Let $\C = \Sg^{\Rd_\alpha \D} \{ \xi_i : i < \mu \}.$
Then $\C \subseteq \Nr_\alpha \D,\  C \textrm{ generates } \D
~~\textrm{and so by the previous lemma }~~ \C =\Nr_{\alpha}\D.$
There exist homomorphisms $ f :\D\to \B$ and $f':\D$ to $\B'$ such that
$f (g_\xi) = e_A(x_\xi)$ and $f' (g_\xi) = e_{A'}(y_\xi)$ for all $\xi < \mu.$
Note that $f$ and $f'$ are both surjective.We now have
$e_A \circ i^{-1} \circ e_{A'}^{-1} \circ ( f'\upharpoonleft \C) = f \upharpoonleft \C.$
Therefore $ker f' \cap \C = ker f \cap \C.$
Hence by $NS$ we have $ker f'=\Fl(ker f' \cap \C) = \Fl(ker f \cap \C)=kerf.$

Let $y \in B$, then there exists $x \in D$ such that $y = f(x)$. 
Define $ \hat{i} (y) = f' (x).$
The map is well defined and is as required.

For the last part, assume that a Henkin ultrafilter $F$ is given and  
define  $f:\A\to \wp(^{\alpha}\beta)$ via
$$p\mapsto \{\tau \in {}^{\alpha}\beta:  {\sf s}_{\tau\cup Id_{\beta\sim \alpha}}p\in F\}.$$
Then like the proof of \cite[Lemma 3.4]{part1} $f$ preserves the polyadic operations. Handling interior operators here is easier, 
for we do not have diagonal elements, and hence
we are not forced to define a congruence on the base of the representation as done before in case of cylindric topological algebras.
For $i\in \alpha$ and $p\in \A$, let 
$$O_{p,i}=\{k\in \beta: {\sf s}_i^kI(i)p\in F\}$$
Let $${\cal B}=\{O_{p,i} : i\in \alpha, p\in A\}.$$
Then it is easy to check that ${\cal B}$ 
is the base for a topology on $\beta$.
Let $W={}^{\alpha}\beta$.
For each $i<\alpha$ 
$$J_i: \wp(W)\to \wp (W)$$
by $$x\in J_iX\Longleftrightarrow \exists U\in {\cal B}(x_i\in U\subseteq \{u\in \alpha: x^i_u\in X\}),$$
where $X\subseteq W$. 
We now check that $f$ preserves the interior operators $J(i)$ $(i<\alpha)$, too.
We need to show 
$$\psi(I_ip)=J_i(\psi(p)).$$
Let $x$ be in $\psi(l_ip)$. 
Then $x_i\in \{u: {\sf s}_u^i I(i){\sf s}_yp\in F\}\in q$
where $y:\alpha\to \beta$, $y\upharpoonright \alpha\sim \{i\}=x\upharpoonright \alpha\sim \{i\}$
and $y(i)=i$.
But $I_i{\sf s}_yp\leq {\sf s}_y p,$ hence
$$U=\{u: {\sf s}_u^i I_i{\sf s}_yp\in F\}\subseteq \{u: {\sf s}_u^i{\sf s}_yp\in F\}.$$
It follows that $x_i\in U\subseteq \{u: x^i_u\in \Psi(p)\}.$
Thus $x\in J_i\psi(p).$
The other  direction is the same as in the proof of \cite[Lemma 3.6]{part1}.

\end{proof}

\begin{corollary}\label{ns} For any ordinal $\beta>\alpha$  $\sf TPCA_{\alpha}=\bold S\Nr_{\alpha}\sf TPCA_{\beta}.$
\end{corollary}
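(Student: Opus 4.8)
The plan is to read this off directly from Theorem \ref{dilation}. Note first that in the present section $\alpha$ is infinite, so any ordinal $\beta>\alpha$ is infinite as well, and Theorem \ref{dilation} applies to the pair $\alpha<\beta$. The inclusion $\TPCA_{\alpha}\subseteq \bold S\Nr_{\alpha}\TPCA_{\beta}$ is then immediate: given $\A\in \TPCA_{\alpha}$, Theorem \ref{dilation} supplies a $\B\in \TPCA_{\beta}$ with $\A\subseteq \Nr_{\alpha}\B$, and this is precisely the assertion that $\A\in \bold S\Nr_{\alpha}\TPCA_{\beta}$ (we do not even need the sharper conclusion $\Sg^{\A}X=\Nr_{\alpha}\Sg^{\B}X$).

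For the reverse inclusion, I would use that $\TPCA_{\alpha}$ is axiomatized by equations --- the finite-cylindrifier fragment of Halmos' polyadic axioms together with the equational schemes governing the interior operators $I_i$, the cross axiom included --- and is therefore closed under subalgebras. Hence it is enough to check that $\Nr_{\alpha}\B\in \TPCA_{\alpha}$ for every $\B\in \TPCA_{\beta}$. This is the polyadic analogue of the corresponding fact for cylindric algebras: writing $\Nr_{\alpha}\B$ for the subalgebra, in the $\alpha$-signature, carried by $\{x\in B:\Delta x\subseteq \alpha\}$, every $\TPCA_{\alpha}$-axiom is ``dimension-bounded'' and so survives the passage to $\Nr_{\alpha}\B$: the Boolean and substitution identities transfer verbatim, the finitely many cylindrifiers ${\sf c}_i$ $(i<\alpha)$ restrict correctly since this set is closed under them, and the interior-operator axioms are preserved because $I_i$ already commutes appropriately with the substitutions in $\B$. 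Therefore $\bold S\Nr_{\alpha}\TPCA_{\beta}\subseteq \TPCA_{\alpha}$, and the two classes coincide.

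All the substance sits in Theorem \ref{dilation}, namely the existence of the dilation; the only remaining point, that $\Nr_{\alpha}$ carries $\TPCA_{\beta}$ into $\TPCA_{\alpha}$, is routine bookkeeping and I do not expect any genuine obstacle.
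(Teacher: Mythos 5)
Your proposal is correct and is essentially the argument the paper intends: the forward inclusion is read off from the dilation theorem (Theorem \ref{dilation}), and the reverse inclusion is the routine observation that $\TPCA_{\alpha}$ is an equationally defined class closed under subalgebras and that $\Nr_{\alpha}$ carries $\TPCA_{\beta}$ into $\TPCA_{\alpha}$. The paper states the corollary without proof precisely because this is the standard deduction, and your remark that $\beta>\alpha$ is automatically infinite here (so that Theorem \ref{dilation} applies) is the only point needing explicit mention.
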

\begin{theorem}\label{positive}.
\begin{enumarab}
\item $\Fr_{\beta}{\sf TPCA}_{\alpha}$ has the interpolation property.
\item ${\sf TPCA}_{\alpha}$ has $SUPAP$.
\end{enumarab}
\end{theorem}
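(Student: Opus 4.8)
The plan is to prove item (1) by amalgamating representations, with the representation theorem obtained in the last part of Theorem \ref{dilation} as the engine, and then to read off item (2) from item (1) together with the facts recorded in Theorem \ref{dilation} and Corollary \ref{ns}: that $\sf TPCA_{\alpha}$ has $NS$ and $UNEP$, and that $\sf TPCA_{\alpha}=\bold S\Nr_{\alpha}\sf TPCA_{\beta}$ for every $\beta>\alpha$.

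For item (1), fix a cardinal $\beta>0$, put $\A=\Fr_{\beta}\sf TPCA_{\alpha}$, and let $X_1,X_2$ be subsets of the free generators, which we may assume jointly exhaust them. Given $a\in\Sg^{\A}X_1$ and $b\in\Sg^{\A}X_2$ with $a\le b$, I would argue by contradiction: assume no $c\in\Sg^{\A}(X_1\cap X_2)$ satisfies $a\le c\le b$. First, as in the first half of the proof of Theorem \ref{weakIP}, the contradiction hypothesis makes the filter $P$ of $\Sg^{\A}(X_1\cap X_2)$ generated by $(\Fl^{\Sg^{\A}X_1}\{a\}\cap\Sg^{\A}(X_1\cap X_2))\cup(\Fl^{\Sg^{\A}X_2}\{-b\}\cap\Sg^{\A}(X_1\cap X_2))$ proper. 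Next I would pass to a common dilation of $\A$, $\Sg^{\A}X_1$, $\Sg^{\A}X_2$ and $\Sg^{\A}(X_1\cap X_2)$ in $\omega$ extra dimensions (these exist by Theorem \ref{dilation}), so that cylindrifier-eliminating witnesses become available; extend $P$ to a maximal filter $F_0$ of $\Sg^{\A}(X_1\cap X_2)$ that is Henkin in the dilation; and then, using the standard filter-extension fact (the $\sf TPCA$ analogue of the last item of the filter lemma above), lift $F_0$ to maximal Henkin ultrafilters $F_1$ of $\Sg^{\A}X_1$ and $F_2$ of $\Sg^{\A}X_2$ with $\Fl^{\Sg^{\A}X_1}\{a\}\subseteq F_1$, $\Fl^{\Sg^{\A}X_2}\{-b\}\subseteq F_2$, and $F_1\cap\Sg^{\A}(X_1\cap X_2)=F_0=F_2\cap\Sg^{\A}(X_1\cap X_2)$; this equality of traces is exactly the $NS$ property. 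By the representation theorem in Theorem \ref{dilation}, $F_0,F_1,F_2$ produce homomorphisms $h_0\colon\Sg^{\A}(X_1\cap X_2)\to\wp(^{\alpha}W)$, $h_1\colon\Sg^{\A}X_1\to\wp(^{\alpha}U_1)$, $h_2\colon\Sg^{\A}X_2\to\wp(^{\alpha}U_2)$ into topological set algebras with $h_1(a)\ne 0$, $h_2(-b)\ne 0$, and $h_1,h_2$ extending $h_0$ along base maps $U_1\to W$, $U_2\to W$. I would then amalgamate $h_1$ and $h_2$ over $h_0$ by the classical fibre-product-of-bases construction for polyadic set algebras: take $U=U_1\times_{W}U_2$ with the product topology and push $h_1,h_2$ forward along the projections, obtaining $h\colon\A\to\wp(^{\alpha}U)$ that agrees with $h_i$ on $\Sg^{\A}X_i$. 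Since $a\in F_1$, $-b\in F_2$, and the traces of $F_1,F_2$ on $\Sg^{\A}(X_1\cap X_2)$ coincide, the canonical base points of $h_1$ and $h_2$ have a common image in $^{\alpha}W$, so $h(a)\cdot h(-b)\ne 0$, contradicting $a\le b$. Hence an interpolant exists.

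For item (2), the quickest route is the general correspondence between the interpolation property of the free algebras of a variety and the super amalgamation property of that variety, which holds for varieties of Boolean algebras with operators (including interior operators) and is used elsewhere in this paper (cf.\ \cite{Mad,Mak}); item (1) then gives $SUPAP$ at once. A more self-contained argument runs parallel to the proof of Theorem \ref{SUPAP}: given injective $f\colon\C\to\A$, $g\colon\C\to\B$ in $\sf TPCA_{\alpha}$, dilate to $\A^{+},\B^{+},\C^{+}\in\sf TPCA_{\alpha+\omega}$ generated by the images of their $\alpha$-neat reducts; use $UNEP$ (Theorem \ref{dilation}) to lift $f,g$ to injective $\bar f\colon\C^{+}\to\A^{+}$, $\bar g\colon\C^{+}\to\B^{+}$; super-amalgamate $\A^{+}$ and $\B^{+}$ over $\C^{+}$ inside $\sf TPCA_{\alpha+\omega}$ by the same amalgamation-of-representations method as in item (1), now in $\alpha+\omega$ dimensions, to get $\D^{+}\in\sf TPCA_{\alpha+\omega}$ with injective $k\colon\A^{+}\to\D^{+}$, $h\colon\B^{+}\to\D^{+}$, $k\circ\bar f=h\circ\bar g$; and put $\D=\Nr_{\alpha}\D^{+}$. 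The super amalgamation inequality descends from $\D^{+}$ to $\D$ using $NS$ exactly as in Theorem \ref{SUPAP}, and $\D\in\sf TPCA_{\alpha}$ because $\sf TPCA_{\alpha}=\bold S\Nr_{\alpha}\sf TPCA_{\alpha+\omega}$ by Corollary \ref{ns}, so the super amalgam is found inside the class itself.

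The step I expect to be the main obstacle is the amalgamation of the topological representations. One must check that the fibre-product base can be topologised so that the induced interior operators $J_i$ restrict correctly to those of $\wp(^{\alpha}U_1)$ and $\wp(^{\alpha}U_2)$ along the two families of coordinates, and, more delicately, that the cylindrifier-eliminating (Henkin) witnesses for $F_1$ and for $F_2$ can be chosen compatibly over those for $F_0$. This is precisely why one must detour through a dilation with $\omega$ spare dimensions, and why it matters that $\sf TPCA_{\alpha}$ enjoys $NS$ and $UNEP$ globally rather than only on a subclass, in contrast with the $\sf TCA$ case where these properties are available only for $\sf TDc_{\alpha}$.
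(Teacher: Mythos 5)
Your proposal follows essentially the same route as the paper: the paper obtains item (1) by repeating the Henkin-ultrafilter argument behind Theorem \ref{in} (a pair of cylindrifier-eliminating ultrafilters with equal traces on the common subalgebra, constructed in the dilation supplied by Theorem \ref{dilation}), and derives item (2) from item (1) by the free-interpolation-implies-$SUPAP$ reasoning of Theorem \ref{s}, which is exactly your first route for item (2). The only cosmetic difference is your fibre product of bases: since both Henkin representations produced by Theorem \ref{dilation} already live on the same dilated base $\beta$, one simply glues the two ultrafilters into a single representation of the free algebra by freeness, taking on $\beta$ the topology generated by the sets $O_{p,i}$ arising from both ultrafilters, which is well defined on the overlap precisely because the traces agree.
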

\begin{proof} The proof of the first item is like the proof of theorem \ref{in} undergoing the obvious modifications, 
and the second item follows from the first using
the reasoning in theorem \ref{s}.
\end{proof}
Note that the representability of any $\A\in \sf TPCA_{\alpha}$ can be easily proved using a simpler version of 
the above technique, where only one Henkin ultrafilter   
is needed to establish representability. 

Now do we have an omitting types theorem in the context of $\sf TPCA_{\alpha}$? The question itself is problematic because the signature
of $\sf TPCA_{\alpha}$ is necessarily uncountable 
even if the dimension is countable because in this case we have continuum many substitution operators, 
and it is known that  for `omitting types theorems' tied so much to
the Baire category theorem, countability is essential. But here we have the related notion to omitting types that can be approached 
in our new context, namely, that of {\it complete representability},
summarized in the following question:
If $\A$ is in $\sf TPCA_{\alpha}$,  is there a 
representation of $\A$ that preserves infinite meets and joins, whenever they exist? 

We make the notion of representation precise in our new context. 
$\B(X)$ denotes the Boolean set algebra 
$(\wp(X), \cup, \cap, \sim ,\emptyset)$. 

\begin{definition} A {\it representation} of $\A$ is a pair $(f, V)$ such that $V=\bigcup_{i \in I}{}^{\alpha}U_i$ for some indexing set 
$I$,  and a family $U_i: i\in I,$ of non-empty lsets that are pairwise disjoint, that is, 
 $U_i\cap U_j=\emptyset$ 
for distinct $i\neq j$, and $f:\A\to\langle \B(V), {\sf c}_{i}, I_i, {\sf s}_{\tau}\rangle_{i<\alpha, \Gamma\subseteq_{\omega} \alpha, 
\tau\in {}^{\alpha}{\alpha}}$
is an injective homomorphism, where $I_i$ as usual is defined by 
$I_i(X)=\{s\in V:  s_i\in {\sf int}\{u\in \bigcup_{s\in V} \rng s: s^i_u\in X\}\}, X\subseteq V.$
\end{definition}

In what follows we may drop the operations when talking about $\alpha$ dimensional 
set algebras (whose top elements consists of $\alpha$-ary sequences) 
identifying notationally 
the algebra with its universe. This does not cause any harm since set algebras are uniquely defined by their top element.

A completely additive Boolean algebra with operators 
is one for which all extra non-Boolean operations preserve arbitrary joins.

\begin{lemma}\label{r} Let $\A\in \sf TPCA_{\alpha}$. A representation $f$ of $\A$
is atomic if and only if it is complete. 
If $\A$ has a complete representation, then $\Rd_{pa}\A$ is completely additive.
\end{lemma}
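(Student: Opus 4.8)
The plan is to prove the three assertions in sequence, relating atomic, complete, and completely additive representations for algebras in $\sf TPCA_{\alpha}$.

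First I would establish that a representation $f$ of $\A$ is complete if and only if it is atomic. For the ``complete implies atomic'' direction, suppose $f:\A\to \B(V)$ is complete, meaning it preserves all existing infima and suprema. If $\A$ were atomless, then for any $x\in A$ with $f(x)\neq\emptyset$ one could pick $s\in f(x)$ and consider the family of all $y\leq x$ in $A$; since $\A$ is atomless one can build a descending sequence (or use that $\prod\{-y: 0<y\leq x, \text{some condition}\}$) whose infimum is $0$ but whose image under $f$ still contains $s$, contradicting completeness. More cleanly: a complete representation must be atomic because for each nonzero $x$, writing $x=\sum\{a\in\At\A: a\leq x\}$ is required to hold after applying $f$, and if $x$ had no atoms below it this sum would be over the empty set, forcing $f(x)=\emptyset$, contradicting injectivity. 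For the ``atomic implies complete'' direction, I would use that set algebras are completely additive together with the fact that in an atomic BAO, joins that exist are determined by the atoms below: if $S\subseteq A$ and $\sum S=b$ exists in $\A$, then every atom below $b$ lies below some element of $S$, hence $f(b)=\bigcup_{s\in S}f(s)$ follows from atomicity of the representation (each point of $V$ lies in $f(a)$ for a unique atom $a$, by injectivity and the atomic hypothesis), and dually for meets using the Boolean complement.

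Next I would prove that if $\A$ has a complete representation then $\Rd_{pa}\A$ is completely additive. Fix a complete representation $f:\A\to\B(V)$. Completely additive means each of the operations ${\sf c}_i$ and ${\sf s}_{\tau}$ (for $i<\alpha$, $\tau\in{}^{\alpha}\alpha$) distributes over arbitrary existing joins. So suppose $\sum_{k\in K}x_k=x$ exists in $\A$; I want ${\sf c}_i x=\sum_{k}{\sf c}_i x_k$ (the join on the right exists and equals the left side). Since $f$ is complete, $f(x)=\bigcup_k f(x_k)$. Now in the set algebra $\langle\B(V),{\sf c}_i,I_i,{\sf s}_{\tau}\rangle$, the cylindrification ${\sf c}_i$ is given by ${\sf c}_i X=\{s\in V: \exists u,\ s^i_u\in X\}$, which visibly commutes with arbitrary unions; similarly ${\sf s}_{\tau}$ acts by $s\mapsto s\circ\tau$ and so its preimage-type action also commutes with unions. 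Hence $f({\sf c}_i x)={\sf c}_i f(x)={\sf c}_i\bigcup_k f(x_k)=\bigcup_k {\sf c}_i f(x_k)=\bigcup_k f({\sf c}_i x_k)$. It remains to pull this back through the injective homomorphism $f$: since $f$ is complete, the equation $f({\sf c}_i x)=\bigcup_k f({\sf c}_i x_k)$ together with $f$ being an order-embedding that reflects existing joins forces ${\sf c}_i x=\sum_k {\sf c}_i x_k$ in $\A$ (any upper bound $c$ of $\{{\sf c}_i x_k\}$ in $\A$ maps to an upper bound of the $f({\sf c}_i x_k)$, hence $f({\sf c}_i x)\leq f(c)$, hence ${\sf c}_i x\leq c$). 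The same argument works verbatim for each ${\sf s}_{\tau}$, and indeed for $I_i$ as well, but the statement only claims additivity of the polyadic reduct, so this suffices.

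The main obstacle I anticipate is the careful handling of the ``reflecting joins'' property of a complete representation: completeness as defined is about preserving existing meets and joins, and one must be slightly careful that an injective homomorphism preserving existing suprema also \emph{reflects} them in the sense needed above (i.e. that $\sum_k {\sf c}_i x_k$ actually \emph{exists} in $\A$ and equals ${\sf c}_i x$, not merely that $f$ of a candidate supremum behaves correctly). This is handled by the observation that $f$ is an order-embedding, so the supremum computed in $\A$, if it exists, must agree with what $f$ dictates, and ${\sf c}_i x$ is manifestly an upper bound of $\{{\sf c}_i x_k\}$ whose image is the \emph{least} upper bound of the images --- hence it is the least upper bound in $\A$. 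A secondary subtlety is the first equivalence in the atomless direction, where one should phrase the argument so as not to need any additivity of $\A$ itself (which is not assumed); the formulation via ``every element is the join of the atoms below it, and this must be respected by $f$'' avoids this cleanly. Everything else is a routine unwinding of the pointwise definitions of ${\sf c}_i$, $I_i$ and ${\sf s}_{\tau}$ on $\B(V)$.
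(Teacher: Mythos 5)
Your treatment of the second assertion (a complete representation forces complete additivity of $\Rd_{pa}\A$) and of the direction ``atomic implies complete'' is correct, and it is essentially the paper's own (very terse) argument spelled out: the paper disposes of the second part by observing that in the image algebra, where existing suprema are unions, cylindrifications and substitutions visibly commute with arbitrary unions; your extra care about why the supremum $\sum_k {\sf c}_i x_k$ actually \emph{exists} in $\A$ (via $f$ being an order-embedding, so that an upper bound $c$ of the ${\sf c}_ix_k$ satisfies $f({\sf c}_ix)\subseteq f(c)$ and hence ${\sf c}_ix\leq c$) is exactly the point that needs saying. Likewise your observation that an atom below an existing supremum $\sum S$ must lie below some member of $S$ is correct and carries the ``atomic implies complete'' direction.

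The gap is in ``complete implies atomic.'' Your ``cleaner'' argument invokes the identity $x=\sum\{a\in{\sf At}\,\A: a\leq x\}$ and then applies $f$; but that identity is not available in a general algebra --- it is equivalent to atomicity, which is what is at stake. Completeness of $f$ only says that $f$ preserves those suprema that \emph{already exist} in $\A$; it does not create the identity $x=\sum\{a\leq x: a\ \text{an atom}\}$. If $x$ has no atoms below it, the sum over the empty set is $0\neq x$ in $\A$, and nothing about $f$ is contradicted. Your first sketch, via a descending chain with infimum $0$, also does not go through as stated: in an atomless Boolean algebra a strictly descending chain below $x$ need not have infimum $0$, nor any infimum at all. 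The standard argument --- the one in the Hirsch--Hodkinson paper that the text cites for this part --- runs through ultrafilters: for a point $s$ of the base, $F_s=\{x\in A: s\in f(x)\}$ is a Boolean ultrafilter; if $F_s$ is non-principal then $\prod F_s=0$ \emph{does} exist (any nonzero lower bound of $F_s$ would itself lie in $F_s$ and generate it principally), so completeness gives $s\in\bigcap_{x\in F_s}f(x)=f(0)=\emptyset$, a contradiction; hence each $F_s$ is principal, generated by an atom $a$, and $s\in f(a)$, which is precisely atomicity of the representation.
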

\begin{demo}{Proof} The first part is like \cite{HH}. For the second part replace $\A$ by its complete representation 
where $\sum$ is $\bigcup$. 
It is clear that in such an algebra  the operations of substitutions and cylindrifiers 
are completely additive.
\end{demo}
By Lemma \ref{r} a necessary condition for the existence of complete representations is the condition of atomicity and complete
additivity of its $\sf CPA$ reduct.
We now prove the harder converse to this result, namely, that when $\A$ is atomic and $\Rd_{pa}\A$ 
is completely additive, then 
$\A$ is completely representable. We note that cylindrifiers are in all cases completely additive. 

\begin{theorem}\label{Ferenczi}  
Any atomic algebra in $\A\in \sf TPCA_{\alpha}$ such that $\Rd_{pa}\A$ is completely additive, 
is completely representable. 
\end{theorem}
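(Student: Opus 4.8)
The plan is to mimic the classical Henkin-style completeness-with-omitting construction for polyadic (and cylindric) algebras, adapted to the topological setting, using the dilation machinery of Theorem \ref{dilation} as the main lever. First I would fix an atomic $\A\in\sf TPCA_{\alpha}$ with $\At\A=\{a_i:i\in I\}$ and $\Rd_{pa}\A$ completely additive. Passing to a dilation $\B\in\sf TPCA_{\beta}$ with $\beta=\alpha+\omega$ (or any $\beta$ with $|\beta\sim\alpha|=|\A|+\omega$), so that $\A\subseteq\Nr_{\alpha}\B$ and $\Sg^{\A}X=\Nr_{\alpha}\Sg^{\B}X$, I would then build, for each atom $a\in\At\A$, a Henkin ultrafilter $F$ of $\B$ with $a\in F$ that in addition is \emph{principal generated at the atomic level}, i.e. $F$ preserves the (existing) suprema: whenever $\sum^{\A}S=1$ with $S\subseteq A$, some element of $S$ lies in $F$. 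The complete additivity of $\Rd_{pa}\A$ together with the fact that $\At\A\subseteq\At\B$ (a standard consequence of the $\Nr$-generation property) is exactly what lets this Baire-category / back-and-forth construction go through: each ``join requirement'' and each ``Henkin witness requirement'' $\cyl kx\le x$ $\Rightarrow$ $\sub kl x$ for a fresh $l\in\beta\sim\alpha$ is a dense condition, and we only need to meet $|\A|+\omega$ many of them, which is $\le|\beta\sim\alpha|$, so enough fresh indices are available.

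Next I would invoke the representation half of Theorem \ref{dilation}: from such an $F$ we obtain a topology on $\beta$ and a homomorphism $f_a:\A\to\langle\wp(^{\alpha}\beta),\cyl i,I_i,\sub ij\rangle$ with $f_a(a)\ne 0$, and because $F$ was chosen to preserve the suprema that exist in $\A$, the map $f_a$ preserves those suprema as well — here is where I use that cylindrifiers are automatically completely additive and that complete additivity of $\Rd_{pa}\A$ was assumed, so \emph{all} existing joins of $\A$ are respected. Then I would take the disjoint union over all atoms $a\in\At\A$: set $V=\bigcup_{a\in\At\A}{}^{\alpha}\beta_a$ with pairwise disjoint copies $\beta_a$ of $\beta$ (each with its own topology), and define $f=\prod_a f_a$ into $\langle\B(V),\cyl i,I_i,\sub\tau\rangle$. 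Since every nonzero $x\in\A$ is above some atom $a$ (atomicity), $f_a(x)\ne 0$, hence $f(x)\ne 0$, so $f$ is injective; and it preserves every supremum that exists in $\A$ because each $f_a$ does and $\B(V)$ is a genuine field of sets in which the join computed pointwise is the actual union. By Lemma \ref{r}, an atomic representation is complete, and $f$ is atomic by construction (the image of each atom is a singleton-style witness, or at any rate the representation is generated by the atoms of $\A$), so $f$ is a complete representation and we are done.

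The main obstacle I expect is verifying that the Henkin ultrafilters can be chosen to \emph{simultaneously} eliminate all (finite) cylindrifiers in the dilation \emph{and} preserve the arbitrary existing joins of $\A$ — i.e. that the two families of dense sets are compatible and jointly realizable given only $|\beta\sim\alpha|$ spare dimensions. The delicate point is that the dilation $\B$ has continuum-many substitution operators but we still only have \emph{finite} cylindrifiers, so the usual ``one Henkin witness kills one defect'' bookkeeping has to be organized so that each suprema-preservation requirement, stated in $\A$, remains a dense open condition after we move to $\B$; this uses precisely that $\A=\Nr_{\alpha}\B$ generates $\B$ and that passing to $\B$ does not create new atoms below old ones (so suprema in $\A$ that were $1$ stay ``cofinal'' in $\B$). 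Once that lemma on the existence of suitable $F$ is in place, the rest is the routine disjoint-union patching argument above, exactly parallel to the cylindric case, with the interior operators handled just as in the last paragraph of the proof of Theorem \ref{dilation} (the topological part is in fact easier here since there are no diagonal elements to force a quotient on the base).
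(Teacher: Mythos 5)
Your overall architecture (dilation, a distinguished ultrafilter, the representation formula $f(a)=\{\tau:{\sf s}_{\bar\tau}^{\B}a\in F\}$, disjoint union over atoms, and Lemma \ref{r} to convert atomicity into completeness) matches the paper's. The gap is in the central lemma you defer: the existence of a Henkin ultrafilter of the dilation that \emph{simultaneously} preserves all the relevant joins. You propose to get it by a Baire-category/back-and-forth argument meeting ``$|\A|+\omega$ many'' dense conditions. That count is wrong, and the method cannot be repaired by better bookkeeping: the join-preservation requirements are indexed by \emph{all} substitutions $\tau\in{}^{\alpha}\mathfrak{n}$ (one condition $\sum_{x\in\At\A}{\sf s}_{\bar\tau}x=1$ for each $\tau$), of which there are at least continuum many, and the cylindrifier-elimination requirements are indexed by pairs $(i,p)$ with $p$ ranging over the dilation $\B$, which is generally much larger than $\A$. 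The Baire category theorem only handles countably many dense open sets, and no amount of ``fresh indices in $\beta\sim\alpha$'' rescues a transfinite meeting of uncountably many dense conditions. So as written, the key lemma is unproved and the proposed proof of it fails.

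The paper's resolution avoids category arguments entirely and is exactly where atomicity and complete additivity earn their keep: the dilation $\B$ has Boolean reduct isomorphic to $F({}^{\mathfrak{n}}\alpha,\A)$, hence is atomic; one checks that each set $G_{i,p}=N_{{\sf c}_ip}\sim\bigcup_jN_{{\sf s}_i^jp}$ and each $G_{X,\tau}=S\sim\bigcup_{x\in X}N_{{\sf s}_{\bar\tau}x}$ is \emph{nowhere dense} in the Stone space (this is precisely where the suprema ${\sf c}_ip=\sum_j{\sf s}_i^jp$ and, via complete additivity, $\sum_{x\in X}{\sf s}_{\bar\tau}x=1$ are used); and then one takes $F$ to be the \emph{principal} ultrafilter generated by an atom below $c$. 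A principal ultrafilter of an atomic Boolean algebra is an isolated point of the Stone space and therefore lies outside \emph{every} nowhere dense set at once -- no countability, no enumeration of requirements, no fresh-index budget. If you replace your back-and-forth construction by this one observation (nowhere density of each defect set plus a principal ultrafilter), the rest of your argument goes through essentially as you describe it.
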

\begin{proof}  Argument used is like the argument in \cite[Theorem 3.10 ]{conference} using a Henkin construction, expressed algebraically by 
dilating the algebra to large enough dimensions and then forming a Henkin ultrafilter (defined as before) 
in the dilation, with a very simple 
topological fact, namely, that in the Stone space of an 
atomic Boolean algebra principal ultrafilters lie outside sets of {\it the first category}; 
these are countable unions of no-where dense sets; so we could always find a principal Henkin ultrafilter from
which we build the complete representation.

Let $c\in \A$ be non-zero. We will find a set algebra $\B\in \sf TPCA_{\alpha}$ and a homomorphism 
from $f:\A \to \B$ that preserves arbitrary suprema
whenever they exist and also satisfies that  $f(c)\neq 0$. 
Now there exists $\B\in \sf TPCA_{\mathfrak{n}}$, $\mathfrak{n}$ a regular cardinal.
such that $\A\subseteq \Nr_{\alpha}\B$ and $A$ generates $\B$ and we can assume
that $|\mathfrak{n}\sim \alpha|=|\mathfrak{n}|$. We also have 
for all $Y\subseteq A$, we have $\Sg^{\A}Y=\Nr_{\alpha}\Sg^{\B}Y.$ 
This dilation also has Boolean reduct isomophic to $F({}^\mathfrak{n}\alpha, \A)$, in particular, it is atomic because $\A$ is atomic.
Cylindrifiers are defined on this minimal functional dilation exactly like in theorem \ref{dilation}
by restricting to singletions.
For all $i< \mathfrak{n}$, we have 
\begin{equation}\label{tarek1}
\begin{split}{\sf c}_{i}p=\sum {\sf s}_i^jp 
\end{split}
\end{equation}
This last supremum can be proved to hold using the same reasoning in \cite[Theorem 1.6]{DM}. 
Let $X$ be the set of atoms of $\A$. Since $\A$ is atomic, then  $\sum^{\A} X=1$. By $\A=\Nr_{\alpha}\B$, we also have $\sum^{\B}X=1$.
Because substitutions are completely additive, by assumption,  we have
for all $\tau\in {}^{\alpha}\mathfrak{n}$
\begin{equation}\label{tarek2}
\begin{split}
\sum {\sf s}_{\bar{\tau}}^{\B}X=1.
\end{split}
\end{equation}
Let $S$ be the Stone space of $\B$, whose underlying set consists of all Boolean ulltrafilters of 
$\B$. Let $X^*$ be the set of principal ultrafilters of $\B$ (those generated by the atoms).  
These are isolated points in the Stone topology, and they form a dense set in the Stone topology since $\B$ is atomic. 
So we have $X^*\cap T=\emptyset$ for every nowhere dense set $T$ (since principal ultrafilters, which are isolated points in the Stone topology,
lie outside nowhere dense sets). 
For $a\in \B$, let $N_a$ denote the set of all Boolean ultrafilters containing $a$.
Now  for all $\Gamma\subseteq \alpha$, $p\in B$ and $\tau\in {}^{\alpha}\mathfrak{n}$, we have,
by the suprema, evaluated in (1) and (2):
\begin{equation}\label{tarek3}
\begin{split}
G_{i,p}=N_{{\sf c}_{i}p}\sim \bigcup_{{\tau}\in {}^{\alpha}\mathfrak{n}} N_{s_{\bar{\tau}}p}
\end{split}
\end{equation}
and
\begin{equation}\label{tarek4}
\begin{split}
G_{X, \tau}=S\sim \bigcup_{x\in X}N_{s_{\bar{\tau}}x}.
\end{split}
\end{equation}
are nowhere dense. 
Let $F$ be a principal ultrafilter of $S$ containing $c$. 
This is possible since $\B$ is atomic, so there is an atom $x$ below $c$; just take the 
ultrafilter generated by $x$.
Then $F\in X^*$, so $F\notin G_{i, p}$, $F\notin G_{X,\tau},$ 
for every $i\in \alpha$, $p\in A$
and $\tau\in {}^{\alpha}\mathfrak{n}$.
Now define for $a\in A$
$$f(a)=\{\tau\in {}^{\alpha}\mathfrak{n}: {\sf s}_{\bar{\tau}}^{\B}a\in F\}.$$
Then $f$ is a homomorphism from $\A$ to the full
set algebra with unit $^{\alpha}\mathfrak{n}$, with interior operators $J(i)$ $i<\alpha$ defined 
exactly as in theorem \ref{in} using all substitutions instead of only finite ones.
We have  $f(c)\neq 0$ because $Id\in f(c)$. 
Moreover $f$ is an  atomic representation since $F\notin G_{X,\tau}$ for every $\tau\in {}^{\alpha}\mathfrak{n}$, 
which means that for every $\tau\in {}^{\alpha}\mathfrak{n}$, 
there exists $x\in X$, such that
${\sf s}_{\bar{\tau}}^{\B}x\in F$, and so $\bigcup_{x\in X}f(x)={}^{\alpha}\mathfrak{n}.$ 
We conclude that $f$ is a complete  representation.
\end{proof}

Now let ${\sf CTPCA}_{\alpha}$ be the class of completely representable $\sf TPCA_{\alpha}$s.
\begin{theorem} 
\begin{enumarab}
\item The class ${\sf CTPCA}_{\alpha}$ is elementary, and it is axiomatizable by a finite schema in first order logic.
\item Let $\Nr: \bold K\to \TPCA_{\alpha}$ be the neat reduct functor.
Then $\Nr$ is strongly invertible, namely, there is a functor $G:\TPCA_{\alpha}\to \bold K$ and natural isomorphisms
$\mu:1_{\bold K}\to G\circ \Nr$ and $\epsilon: \Nr\circ G\to 1_{\TPCA_{\alpha}}$
\end{enumarab}
\end{theorem}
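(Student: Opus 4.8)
The two items are essentially independent. For item (1) the plan is to read a first--order axiomatisation off the representation theorem already proved; for item (2) it is to redress Theorem~\ref{dilation} in categorical language. Begin with item (1). Lemma~\ref{r} together with Theorem~\ref{Ferenczi} gives the equivalence: for $\A\in{\sf TPCA}_{\alpha}$, $\A$ is completely representable iff $\A$ is atomic and $\Rd_{pa}\A$ is completely additive. Atomicity is first--order: it is $\forall x\,(x\neq 0\to\exists y\,(\mathrm{at}(y)\wedge y\le x))$, where $\mathrm{at}(y)$ abbreviates $y\neq 0\wedge\forall z\,(z\cdot y=z\vee z\cdot y=0)$. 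Since the cylindrifiers are always completely additive and the Boolean joins are preserved trivially, ``$\Rd_{pa}\A$ completely additive'' reduces to ``each substitution ${\sf s}_{\tau}$ ($\tau\in{}^{\alpha}\alpha$) is completely additive'', so the content of item (1) is to show this is first--order over atomic algebras, uniformly in $\tau$.

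The key is the Boolean fact: if $\A$ is an atomic Boolean algebra and $f$ a normal additive operator on it, then $f$ is completely additive iff $\A\models\Phi_f$, where
$$\Phi_f:\qquad\forall x\,\forall w\,\bigl(\,\forall y\,(\mathrm{at}(y)\wedge y\le x\to f(y)\le w)\ \longrightarrow\ f(x)\le w\,\bigr).$$
The forward direction is immediate: atomicity gives $x=\sum\{a:\mathrm{at}(a),\,a\le x\}$, so complete additivity forces $f(x)=\sum\{f(a):\mathrm{at}(a),\,a\le x\}$, which is $\Phi_f$. For the converse, if $\sum S=u$ exists and $w$ is any upper bound of $\{f(s):s\in S\}$, then each atom $a\le u$ lies below some $s\in S$ (otherwise $-a$ would be an upper bound of $S$, forcing $u\le -a$ and contradicting $0\neq a\le u$), hence $f(a)\le f(s)\le w$ for all such $a$, and $\Phi_f$ yields $f(u)\le w$; with monotonicity this gives $f(u)=\sum_{s}f(s)$. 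Applying this with $f={\sf s}_{\tau}$ shows that ${\sf CTPCA}_{\alpha}$ is axiomatised, relative to the finite--schema axiomatisation of ${\sf TPCA}_{\alpha}$, by the atomicity sentence together with the schema $\{\Phi_{{\sf s}_{\tau}}:\tau\in{}^{\alpha}\alpha\}$ --- a finite schema since each $\Phi_{{\sf s}_{\tau}}$ comes from one template. In particular ${\sf CTPCA}_{\alpha}$ is elementary. I expect the delicate point here to be exactly the verification of this Boolean fact (the atom-below-a-supremum step) and the bookkeeping that the resulting family is a genuine finite schema.

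For item (2), the inverse functor $G$ is simply ``take the minimal dilation'' from Theorem~\ref{dilation}. Fix $\beta=\alpha+\omega$ and $\bold K=\{\B\in{\sf TPCA}_{\beta}:\Sg^{\B}\Nr_{\alpha}\B=\B\}$. For $\A\in{\sf TPCA}_{\alpha}$ let $G(\A)$ be the (isomorphism-unique) minimal dilation from Theorem~\ref{dilation}, so $G(\A)=\Sg^{G(\A)}A$, $\A\subseteq\Nr_{\alpha}G(\A)$ and $\Sg^{\A}X=\Nr_{\alpha}\Sg^{G(\A)}X$ for all $X\subseteq A$; taking $X=A$ gives $\Nr_{\alpha}G(\A)=\Sg^{\A}A=\A$ and $\Sg^{G(\A)}\Nr_{\alpha}G(\A)=G(\A)$, so $G(\A)\in\bold K$. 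On an injective homomorphism $f:\A\to\A'$, the $UNEP$/lifting argument inside the proof of Theorem~\ref{dilation} produces a \emph{unique} $\bar f:G(\A)\to G(\A')$ restricting to $f$ on $\A$; uniqueness makes $G(g\circ f)=G(g)\circ G(f)$ and $G(\mathrm{id})=\mathrm{id}$ automatic, so $G$ is a functor into $\bold K$.

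It remains to produce the natural isomorphisms. For $\B\in\bold K$, $\B=\Sg^{\B}\Nr_{\alpha}\B$, so $\B$ is itself a minimal dilation of $\Nr(\B)=\Nr_{\alpha}\B$; the uniqueness clause of Theorem~\ref{dilation} then yields a canonical isomorphism $\mu_{\B}:\B\to G(\Nr(\B))$ fixing $\Nr_{\alpha}\B$, and $\mu=(\mu_{\B})$ is natural by a routine diagram chase (both composites agree on the neat $\alpha$-reduct, which generates, hence are equal). For $\A\in{\sf TPCA}_{\alpha}$ we computed $\Nr(G(\A))=\A$, so $\epsilon_{\A}=\mathrm{id}_{\A}$ works, its naturality being the clause $\Nr(\bar f)=f$ built into the choice of $\bar f$. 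This makes $(\mu,\epsilon):G\to\Nr$ an adjoint equivalence, i.e.\ $\Nr$ strongly invertible. Item (2) is mostly bookkeeping on top of Theorem~\ref{dilation}; the only trap is to stay inside the category whose morphisms are injective homomorphisms --- the ones $\Nr$ was defined on --- and always to take the minimal dilation, so that $G$ really lands in $\bold K$ and the liftings are unique.
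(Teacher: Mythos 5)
Your proposal is correct and follows essentially the same route as the paper: both reduce item (1) to the characterisation of complete representability as atomicity plus complete additivity of the substitutions (Lemma \ref{r} and Theorem \ref{Ferenczi}) and then capture the latter by a first-order schema indexed by $\tau\in{}^{\alpha}\alpha$, the only difference being that the paper's formula $\psi_{\tau}$ asserts just the single supremum $\sum\{{\sf s}_{\tau}a: a \text{ an atom}\}=1$ (which is all the proof of Theorem \ref{Ferenczi} actually uses), whereas your $\Phi_{{\sf s}_{\tau}}$ asserts full complete additivity over atoms. For item (2) the paper merely cites an earlier argument, and your explicit construction of $G$ as the minimal-dilation functor from Theorem \ref{dilation}, with uniqueness supplying functoriality and the natural isomorphisms, is exactly that argument.
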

\begin{proof} Atomicity can be expressed by a first order sentence, and complete additivity can 
be captured by the following continuum many formulas,
that form a single schema.
Let $\At(x)$ be the first order formula expressing that $x$ is an atom. That is $\At(x)$ is the formula
$x\neq 0\land (\forall y)(y\leq x\to y=0\lor y=x)$. For $\tau\in {}^{\alpha}\alpha$, let $\psi_{\tau}$ be the formula:
$$y\neq 0\to \exists x(\At(x)\land {\sf s}_{\tau}x\neq 0\land {\sf s}_{\tau}x\leq y).$$ 
Let $\Sigma$ be the set of first order formulas obtained  by adding all formulas $\psi_{\tau}$ $(\tau\in {}^{\alpha}\alpha)$
to the polyadic schema.
Then it is esay to show that ${\sf CTPCA}_{\alpha}={\bf Mod}(\Sigma)$. 
The second part follows by using exactly the same reasoning 
in \cite[Theorem 3.4]{conference}.
\end{proof}

We can also expand  the signature of the $\omega$ dimensional algebras studied in \cite{S, AUamal}, 
whose signature is countable having substitutions coming from a countable {\it rich} semigroup $G$,
by interior operators with the same equations postulated for $\sf TCA_{\alpha}$. 
Denote the resulting variety by $\sf TPA_G$.
Also usual Halmos  polyadic algebras can be enriched with such modalities, call the resulting variety
$\sf TPA_{\alpha}$. 
We get all positive results obtained for $\sf TPCA_{\alpha}$ with almost the same proofs using the techniques in 
\cite{AUamal, conference, super,S}.
In particular we have:
\begin{theorem}\label{polyadicsupap}
\begin{enumarab}
\item $\sf TPA_G$ and $\sf TPA_{\alpha}$ have the super amalgamation property
\item In each such variety $\A$ is completely representable 
if and only if the reduct obtained by discarding interior operators
is completely additive.
\item For such varieties the functor $\Nr$ (defined like before adapted to the present context)  is strongly invertible.
\end{enumarab}
\end{theorem}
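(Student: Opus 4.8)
The plan is to transcribe, almost line for line, the treatment already given for $\sf TPCA_{\alpha}$ in Theorems~\ref{dilation}, \ref{positive} and \ref{Ferenczi}, checking only that neither the restriction to a countable rich semigroup $G$ of substitutions (for $\sf TPA_G$) nor the full Halmos signature (for $\sf TPA_{\alpha}$) causes any of the constructions to break down. The backbone is an analogue of the dilation theorem~\ref{dilation}: for $\alpha<\beta$ and every $\A$ in the variety there is a $\beta$-dimensional $\B$ in the same variety with $\A\subseteq\Nr_{\alpha}\B$ and $\Sg^{\A}X=\Nr_{\alpha}\Sg^{\B}X$ for all $X\subseteq A$, so that the variety has $NS$ and $UNEP$. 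For $\sf TPA_{\alpha}$ this is the full-signature version handled in \cite{DM, conference, super}; for $\sf TPA_G$ richness of $G$ is precisely the hypothesis that makes the minimal functional dilation available, as in \cite{AUamal, S}. In both cases the interior operators are transported to the dilation by the same formula used in the proof of Theorem~\ref{dilation}, whose soundness rests on the cross axiom ${\sf s}_i^jI(i)=I(j){\sf s}_i^j$; the absence of diagonal elements in both signatures makes this step, if anything, easier than in the $\sf TCA$ case.

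For part (1): with $NS$ and $UNEP$ available, I would run the compactness argument of Theorem~\ref{in} in the present signature to obtain the interpolation property for $\Fr_{\beta}{\sf TPA}_G$ and $\Fr_{\beta}{\sf TPA}_{\alpha}$ — this is the polyadic analogue of how Theorem~\ref{positive}(1) follows from Theorem~\ref{in} — and then invoke the reasoning of Theorem~\ref{s} to lift interpolation in the free algebras to $SUPAP$ for the whole variety. Since $\sf TPA_G$ and $\sf TPA_{\alpha}$ are varieties, this simultaneously yields $AP$, $SAP$ and $ES$ by Theorem~\ref{e}.

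For part (2), one direction is Lemma~\ref{r}: a complete representation forces the polyadic reduct to be completely additive. For the converse I would replay the proof of Theorem~\ref{Ferenczi}: dilate $\A$ to a regular cardinal $\mathfrak{n}$ with $\A=\Nr_{\alpha}\B$, use complete additivity to push the suprema ${\sf c}_ip=\sum{\sf s}_i^jp$ and $\sum{\sf s}_{\bar{\tau}}X=1$ into the dilation, observe that the corresponding subsets $G_{i,p}$ and $G_{X,\tau}$ of the Stone space of $\B$ are nowhere dense, choose a principal ultrafilter containing a prescribed atom below the given nonzero element (possible since $\A$, hence $\B$, is atomic), and read off a complete set-representation, interpreting the interior operators through the base topology and the operators $J_i$ exactly as there. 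Nothing here depends on whether the substitution semigroup is all of ${}^{\alpha}\alpha$ or a rich subsemigroup $G$. For part (3), having $NS$, $UNEP$ and representability of each algebra, I would define the functor $G$ inverting $\Nr$ by sending $\A$ to its essentially unique minimal dilation and a morphism to its canonical lift — just as in \cite[Theorem 3.4]{conference} — and take for $\mu$ and $\epsilon$ the canonical neat-embedding and restriction maps, which are isomorphisms precisely by $UNEP$.

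The one place where genuine verification is needed is the dilation theorem underlying everything: for $\sf TPA_G$ one must confirm that the minimal functional dilation and the redefined cylindrifiers, substitutions and interior operators all remain inside the $G$-signature, which is where richness of $G$ is used; and for $\sf TPA_{\alpha}$ one must carry out the somewhat tedious soundness computation showing that the transported interior operators do not depend on the auxiliary permutation, as in \cite{DM}. Once the dilation theorem is secured, parts (1)--(3) are faithful transcriptions of the $\sf TPCA_{\alpha}$ arguments.
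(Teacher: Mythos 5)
Your proposal is correct and follows exactly the route the paper intends: the paper itself gives no detailed argument for Theorem~\ref{polyadicsupap}, stating only that all positive results for $\sf TPCA_{\alpha}$ carry over ``with almost the same proofs'' using the techniques of \cite{AUamal, conference, super, S}, which is precisely the transcription of Theorems~\ref{dilation}, \ref{positive} and \ref{Ferenczi} that you carry out. Your explicit identification of the dilation theorem (and the soundness of transporting the interior operators, resp.\ the role of richness of $G$) as the one point requiring genuine verification is in fact more informative than the paper's own treatment.
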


We can add diagonal elements and {\it  relativize semantics} of topological polyadic algebras, getting 
the variety  of {\it topological cylindric polyadic algebras of dimension $\alpha$} 
whose signature is like $\sf TPCA_{\alpha}$ axiomatized by
the set of equations postulated  in \cite[definition 6.3.7]{Fer} together with the schema of equations for the interior operators.

Denote this abstract class by $\sf TPCEA_{\alpha}$ and the concrete class of representable algebras 
by $\sf TGp_{\alpha}$. Then using the same methods adopted her replacing Henkin ultrafilters by 
what Ferenzci calls {\it perfect ultrafilters} we get:
\begin{theorem}\label{f} Let $\alpha\geq \omega$. Then the following hold:
\begin{enumarab}
\item $\sf TGp_{\alpha}=\sf TPCEA_{\alpha}$; hence
$\sf TGp_{\alpha}$  is a variety that can be axiomatizable by a finite schema of Sahlqvist equations. Furthermore, it is canonical and 
atom-canonical.
\item $\sf TGp_{\alpha}$ has the superamalgamation property
\item Any atomic algebra in ${\sf TGp}_{\alpha}$ has a complete representation. In particular, 
the class of completely representable  $\sf TCPEA_{\alpha}$s  is elementary.
\end{enumarab}
\end{theorem}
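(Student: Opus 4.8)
The plan is to rerun, \emph{mutatis mutandis}, the three-step programme already carried out for $\sf TPCA_{\alpha}$ in Theorems \ref{dilation}, \ref{positive} and \ref{Ferenczi}, the only real change being that Henkin ultrafilters are replaced throughout by the perfect ultrafilters of Ferenczi \cite{Fer}: these are the ultrafilters that eliminate the finite cylindrifiers \emph{and} interact correctly with the diagonal elements in the relativized semantics. I will prove the representation theorem $\sf TGp_{\alpha}=\sf TPCEA_{\alpha}$ first, and then read off the remaining assertions.

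For the inclusion $\sf TGp_{\alpha}\subseteq \sf TPCEA_{\alpha}$ one checks that every equation in the chosen presentation (the cylindric--polyadic schema of \cite[Definition 6.3.7]{Fer} together with the interior-operator schema postulated for $\sf TCA_{\alpha}$) holds in every set algebra occurring in the definition of a representation; this is routine. For the converse, given non-zero $c\in\A\in\sf TPCEA_{\alpha}$, I would dilate $\A$ to $\mathfrak n\gg\alpha$ dimensions exactly as in Theorem \ref{dilation}, obtaining $\B$ with $\A\subseteq\Nr_{\alpha}\B$, with $A$ generating $\B$, and with $\Sg^{\A}Y=\Nr_{\alpha}\Sg^{\B}Y$ for all $Y\subseteq A$; since only finite cylindrifiers are available, ${\sf c}_k$ and $I(k)$ are re-defined on a minimal functional dilation by the formula used in that proof, now also verified against the diagonals. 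Then, picking a perfect ultrafilter $F$ of $\B$ with $c\in F$, the map $f(a)=\{\tau\in{}^{\alpha}\mathfrak n:{\sf s}_{\bar\tau}a\in F\}$ preserves the polyadic operations and the diagonals (by perfectness), and the sets $O_{p,i}=\{k:{\sf s}_i^kI(i)p\in F\}$ generate a topology on the base for which $f$ preserves the interior operators $J_i$, exactly as in Theorem \ref{dilation}. This gives $\A\in\sf TGp_{\alpha}$. Because every axiom in this presentation is a Sahlqvist equation, Sahlqvist's theorem yields at once that $\sf TGp_{\alpha}$ is a canonical (and atom-canonical) variety; alternatively atom-canonicity follows from item (3), since the complete representation of an atomic $\A\in\sf TGp_{\alpha}$ extends to an embedding of $\Cm\At\A$ into a set algebra.

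For item (2) I would first establish the $\sf TPCEA$-analogue of Theorem \ref{in}: whenever $\rho:\beta\to\wp(\alpha)$ with $\alpha\setminus\rho(i)$ infinite for all $i$, the algebra $\Fr_{\beta}^{\rho}\sf TPCEA_{\alpha}$ has the interpolation property; the proof of Theorem \ref{in} transfers line by line, using a perfect ultrafilter in a common dilation to build the joint representation. Then $SUPAP$ for $\sf TGp_{\alpha}=\sf TPCEA_{\alpha}$ follows exactly as for Theorem \ref{positive}(2) (the argument of Theorem \ref{s}): the free algebras inherit interpolation, and a super-amalgam is read off from it. For item (3) I would imitate the Henkin construction of Theorem \ref{Ferenczi}: dilate the atomic $\A$ to a regular cardinal $\mathfrak n$ with $|\mathfrak n\setminus\alpha|=|\mathfrak n|$, note that the dilation is atomic with Boolean reduct $F({}^{\mathfrak n}\alpha,\A)$, and use the supremum identities ${\sf c}_i p=\sum_j{\sf s}_i^j p$ and $\sum_{x\in X}{\sf s}_{\bar\tau}x=1$ (with $X$ the set of atoms) to show that the associated sets $G_{i,p}$ and $G_{X,\tau}$ are nowhere dense in the Stone space of the dilation. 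A principal ultrafilter $F$ generated by an atom below a prescribed non-zero $c$ then avoids all of them, and $f(a)=\{\tau:{\sf s}_{\bar\tau}a\in F\}$ is an atomic, hence complete, representation with $f(c)\neq 0$. Elementarity of the class of completely representable $\sf TPCEA_{\alpha}$s follows as in the theorem after Theorem \ref{Ferenczi}: atomicity is first order, and complete additivity is captured by the schema $y\neq0\to\exists x(\At(x)\wedge{\sf s}_{\tau}x\neq0\wedge{\sf s}_{\tau}x\leq y)$ with $\tau$ ranging over ${}^{\alpha}\alpha$.

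The step I expect to be the main obstacle is the interaction of diagonal elements with the interior operators in the dilation and in the representation. In the pure polyadic case (Theorem \ref{dilation}) one could avoid quotienting the base of the representation precisely because there were no diagonals; here, as in the cylindric topological case, the ${\sf d}_{ij}$'s force the representation to live on a quotient of ${}^{\alpha}\mathfrak n$ by the equivalence induced by $F$, and one must check that the $J_i$ descend to this quotient and that the topology generated by the $O_{p,i}$ is well defined there. Verifying soundness of the dilation formula for ${\sf c}_k$ and $I(k)$ in the presence of diagonals, and that perfect ultrafilters do preserve ${\sf d}_{ij}$ in the relativized semantics, is where the real work lies; the remainder is a transcription of arguments already in \cite{part1}, \cite{DM} and \cite{conference}.
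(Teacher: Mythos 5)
Your proposal follows the same route as the paper: dilate, replace Henkin ultrafilters by Ferenczi's perfect ultrafilters, quotient the base by the diagonal-induced equivalence, transfer the topology via the sets $O_{p,i}$, and then read off $SUPAP$ from interpolation in the dimension-restricted free algebras and complete representability from the Baire-category/principal-ultrafilter argument. You have in fact correctly isolated the one point the paper's own sketch treats as the essential new work, namely that the representation must live on $\Gamma/\sim$ and that the interior operators $J_i$ must descend to that quotient.
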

\begin{demo}{Sketch} Suppose $\A$ is such an algebra.
Then a dilation can be formed using all available substitititions, so we get $\A=\Nr_{\alpha}\B$ where $\B\in \sf TCPEA_{\beta}$.
However in the process of representation only {\it admissable substitution on $\beta$} are used. A substituition $\tau\in {}^{\beta}\beta$ 
is such if $\dom \tau\subseteq \alpha$
and $\rng\tau\cap \alpha=\emptyset$. 
Call the set of all admissable substitutions $\sf adm$. Henkin ultrafilters can always be found,
but they are modified to give {\it perfect  ultrafilters} in the sense of \cite[p.128]{Sayedneat}. 

To preserve diaginal
elements one factors out the set $\Gamma=\{i\in \beta:\exists j\in \alpha: {\sf c}_i{\sf d}_{ij}\in G\}$ by the congruence
relation $k\sim l$ iff ${\sf d}_{kl}\in G.$ Then $\Gamma\subseteq \beta$
and the required representation with  base  $\Gamma/\sim$ is defined via  
$$f(a)=\{\bar{\tau}\in {}^{\alpha}[\Gamma/\sim]: \tau\in {\sf adm}, {\sf s}_{\tau}^{\B}a\in G\},$$
where for each $i\in \alpha$ and $\tau\in \sf adm$ 
$\bar{\tau}(i)=\tau(i)/\sim$, witness  \cite[p. 128]{Sayedneat}.
Next one defines for $p\in \A$ and $i\in \alpha$ the sets 
$O_{p,i}$ and the interior operators $J_i$ on 
the representation as before.

\end{demo}

\section{Summary of results on amalgamation and interpolation}

In the next table we summarize our results on classes of algebras
in tabular form. This task was done for different classes of cylindric algebras in the recent
\cite{MS}.

In Table 2, we summarize the results
we obtained on the interpolation property
for the free algebras corresponding to
the classes of algebras dealt with in Table 1.

In the top row of Table 1,  we find a list of nine different amalgamation and embedding
properties ($AP$ and $EP$ for short), together with the definability property $ES$  and at the leftmost column
we find a comprehensive list of classes of algebras occupying six rows.
At the top of the third column `strong $AP$ w.r.t rep.' means 'the strong 
amalgamation property with respect to the class of {\it representable algebras} in question',
while at the top of the fifth column `$AP$ w.r.t abs.' means the amalgamation property with respect to the class of {\it abstract algebras}. 
For example 
`strong $AP$ w.r.t to rep' is `strong $AP$ with respect to $\sf RTCA_{\alpha}$' and 
`$AP$ w.r.t abs.' is `$AP$ with respect to $\TCA_{\alpha}$.'

Table 2 contains a summary of the results
involving interpolation of the
dimension-restricted free algebras. The rows addressing semisimple, substitution, 
representable cylindric algebras in Table 1 collapse to just one row in Table 2, since the free algebra coincide for all these classes
(they all generate the same variety, namely, $\sf RTCA_{\alpha}$).

In  more general contexts than topological predicate logic addressed here,
the Craig interpolation property ramifies into several different interpolation properties ($IP$ for short).
These properties are summarized in the six
columns of the uppermost row of Table 2.

Only the first row in  the next table deals with finite dimensional algebras of dimension $n>1$.
$\sf K$ denotes any subclass of $\sf TCA_n$ containing the variety of representable algebras.
All the $\bf no$'s in this row follow readily from theorem \ref{Comer}.
\begin{table}[ht]
\caption{}
\hspace{-1 cm}
\begin{tabular}[b]{|l|c|c|c|c|c|c|c|c|c|c|}
     \hline
 &strong  &strong                     &$AP$             &$AP$                        &$AP$       &strong   &$EP$    &$EP$         &$SUP$    &$ES$               \\
 &$AP$    &$AP$                       &             &w.r.t                          &for             &$EP$      &           &   for           &$AP$       &             \\
    &     &w.r.t                         &                   &abs.   &simple                 &       &           &simple      &                 &            \\
   &     &rep.    &                     &                   &          &                &           &algebras   &                 &          \\
  \hline          
$\sf TRCA_n\subseteq \sf K$ &no&no&no&no&no&no&no&no&no&no\\

  \hline
         $\sf TLf_{\alpha}$ &yes&yes&yes&yes&yes&yes&yes&yes&yes&yes\\
   \hline
$\TDc_{\alpha}$ &no&yes&no&yes&no&no&no&no&no&yes\\
    \hline
     $\sf TSs_{\alpha}$ &no&no& yes&yes& yes&yes&yes&yes&no&no\\
      \hline
     $\sf TSc_{\alpha}$ &no&no&yes&yes& yes&yes&yes&yes&no&no\\

     \hline
     $\sf RTCA_{\alpha}$ &no&no   &no& no&yes&yes&yes&yes&no&no\\
     \hline
     $S\Nr_{\alpha}\TCA_{\alpha+k}$
 &no&no   &no& no&yes&?&?&yes&no&no\\
    \hline
     $\TCA_{\alpha}$ &no&no&no&no&no&yes&no&yes&no&no\\
     \hline
     ${\sf TPCA}_{\alpha}$ &yes&yes   &yes& yes&yes&yes&yes&yes&yes&yes\\
     \hline
     ${\sf TPA}_{G}$ &yes&yes   &yes& yes&yes&yes&yes&yes&yes&yes\\
     \hline
     ${\sf TPA}_{\alpha}$ &yes&yes   &yes& yes&yes&yes&yes&yes&yes&yes\\
     \hline
     ${\sf TGp}_{\alpha}$ &yes&yes   &yes& yes&yes&yes&yes&yes&yes&yes\\
\hline
\end{tabular}
\end{table}
Now we clarify the results  collected above, stating where can they be found in the text.
The last four rows in Table 1 and Table 2  follow from theorems \ref{positive},  \ref{polyadicsupap}, \ref{f}. Next we have;

\begin{enumarab}

\item Row two: Here we are dealing with ordinary predicate topological logic which has $IP$
as proved in theorem \ref{in}.
The rest now follow from theorems \ref{SUPAP} and \ref{e}.

\item Row three: All the {\bf no}'s in second row follows from example \ref{d}. For simple algebras
the algebras $\A$ and $\B$ taken in example \ref{d} can be easily chosen to be simple. 
Though there is a {\it simple} amalgam, it is {\it not} dimension complemented.
As illustrated in example \ref{d}, there could not be one.

The first and second
{\bf yes} follow from theorem \ref{SUPAP}, and
the third {\bf yes} is due to the fact that $ES$
follows from the fact that $\Dc_{\alpha}$ has $SUPAP$ w.r.t $\sf RTCA_{\alpha}$, 
resorting to theorem \ref{e}.

\item Row four: $\sf TSs_{\alpha}$ does not have $SAP$ because it does not have $ES$, by theorem \ref{es}. 
In fact, this last theorem  takes care of all
the ${\sf no}$'s.
The remaining {\bf yes}'s follow from theorem \ref{sc}
and corollary \ref{embedding}.

\item Row five: The results follow like in the previous item.
In particular, the {\bf no}'s follow from theorems \ref{es} using theorem \ref{e}.

\item Row six: The ${\bf no}$'s except for the last 
follow from theorem \ref{am}; the last ${\bf no}$ follows from
theorem \ref{es}. 
The ${\bf yes}$'s follow from theorem \ref{sc}
and corollary \ref{embedding}. 

\item Row seven. Like row six, except that the various forms of $EP$ for algebras that are not simple 
remains unsettled. In theorem \ref{simon} the base algebra is the minimal subalgebra of an algebra obtained
by {\it twisting} a representable algebra, and the other algebra is representable. But twisted algebras do not satisfy the so-called merry go round 
identities, which algebras in $S\Nr_{\alpha}\CA_{\alpha+2}$ do. So this technique does not work for $S\Nr_{\alpha}\TCA_{\alpha+k}$ when
$k\geq 2$. 
\item Row eight. Note that $\TCA_{\alpha}$ does not have $AP$ 
with respect to $\sf TRCA_{\alpha}$ is trivial. One just takes a non-representable algebra 
$\A$ and considers the inclusion maps $i:\A\to \A$ twice, so that we are required to amalgamate $\A$ 
over $\A$ by a representable algebra, which is impossible for the amalgam necessarily contains an isomorphic copy of $\A$, while
any subalgebra of a representable algebra is representable. The ${\bf no}$'s follow from theorems \ref{am}, \ref{es} and \ref{simon},
and the only ${\bf yes}$  from theorem \ref{sc}.

\end{enumarab}

For $\sf TWSc_{\alpha}$  all questions involving $AP$ remains unsettled. If any of the conditions in theorem \ref{twsc} 
hold, then we get a  {\bf no} for all such questions, for in this case we get that 
$\sf TWSc_{\alpha}=\sf RTCA_{\alpha}$.

In the following table $IP$ is short for interpolation property. The top row addresses
all interpolation properties introduced and investigated throughout  this paper. $IP$ is the interpolation property, $WIP$ is weak $IP$,
$AIP$ is almost $IP$, \ldots etc. The first column addresses $\sf K$ where $\sf K$ is any class 
between $\sf TRCA_n$ and $\sf TCA_n$ $n$ is finite $>1$. The {\bf no}'s in this row follows 
from theorem \ref{Comer} and corollary \ref{notequivalent}. Wiuthout loss of generality, 
we consider (countable) 
free algebras on $\omega$ generators.

Notice that $IP$ implies $AIP$ implies
$WIP$, and strong restricted $IP$ implies all other restricted versions of $IP$.
\begin{table}[ht]
\caption{}
\begin{tabular}{|l|c|c|c|c|c|c|}    \hline
						&$IP$  &  $ AIP$                         &            &                         &almost restricted  		 &weak                         \\
						&    &                        &$WIP$	     &restricted                   &$IP$	                        &restricted                     \\
						&	  &                        &                      &$IP$                           &	  &                                  $IP$	                    \\

                                                               \hline
                                                                          $\Fr_{\omega}{\sf K}$&no&no&no&no&no&no\\

						\hline
                                                                        $\Fr_{\omega}^{\rho}\TCA_{\alpha}$&yes&yes&yes&yes&yes&yes\\
                                                                              which is&&&&&& \\
                                                                               in $\sf TLf_{\alpha}$&&&&&&\\
                                                                             \hline
                                                                         $\Fr_{\omega}^{\rho}\TCA_{\alpha}$
                                                                          &yes&yes&yes&yes&yes&yes\\
                                                                              which is&&&&&& \\

                                                                         in $\sf TDc_{\alpha}$&&&&&&\\

                                                                           \hline
                                                                          $\Fr_{\omega}\sf RTCA_{\alpha}$&no&no&yes&yes&yes&yes\\
                                                                           \hline
                                                                          $\Fr_{\omega}\sf S\Nr_{\alpha}\sf TCA_{\alpha+k}$&no&no&yes&?&?&?\\
                                                                           \hline

                                                                          $\Fr_{\omega}{\sf TCA}_{\alpha}$&no&yes&no&no&no&no\\
                                                                            
                                                               \hline
                                                                          $\Fr_{\omega}{\sf TPCA}_{\alpha}$&yes&yes&yes&yes&yes&yes\\
                                                                                
                                                               \hline
                                                                          $\Fr_{\omega}{\sf TPG}$&yes&yes&yes&yes&yes&yes\\

                                                               \hline
                                                                          $\Fr_{\omega}{\sf TPA}_{\alpha}$&yes&yes&yes&yes&yes&yes\\

                                                               \hline
                                                                          $\Fr_{\omega}{\sf TGp}_{\alpha}$&yes&yes&yes&yes&yes&yes\\

\hline

\end{tabular}
\end{table}

\vspace{10cm}

All positive theorems on the free algebras, addressing cases other than $\sf RTCA_{\alpha}$
follow from theorems \ref{in} and \ref{positive}.

Concerning the $\sf RTCA_{\alpha}$ case,
the positive results follow from  theorems \ref{embedding}, \ref{weak2} 
and the negative results follow from theorems 
\ref{am} and \ref{simon}. The various forms of restricted $IP$ remain unsettled 
for $\Fr_{\omega}S\Nr_{\alpha}\TCA_{\alpha+k}$.

\end{document}